\numberwithin{equation}{section}
\newtheorem{theorem}{Theorem}[section]
\newtheorem{corollary}[theorem]{Corollary}
\newtheorem{definition}[theorem]{Definition}
\newtheorem{assumption}[theorem]{Assumption}
\newtheorem{proposition}[theorem]{Proposition}
\newtheorem{lemma}[theorem]{Lemma}
\newtheorem{Question}[theorem]{Question}
\newtheorem{remark}[theorem]{Remark}
\DeclareMathOperator{\tr}{Tr}
\newcommand{\R}{\mathbb R}
\newcommand{\E}{{\mathbb E}}
\newcommand{\diag}{\operatorname{Diag}}
\newcommand{\inr}[1]{\bigl< #1 \bigr>}
\newcommand{\eps}{\varepsilon}
\title{Covariance estimation with direction dependence accuracy}
\begin{document}
\author{Pedro Abdalla and Shahar Mendelson}
\maketitle

\begin{abstract}
We construct an estimator $\widehat{\Sigma}$ for covariance matrices of unknown, centred random vectors X, with the given data consisting of N independent measurements $X_1,...,X_N$ of X and the wanted confidence level. We show under minimal assumptions on X, the estimator performs with the optimal accuracy with respect to the operator norm. In addition, the estimator is also optimal with respect to direction dependence accuracy: $\langle \widehat{\Sigma}u,u\rangle$ is an optimal estimator for $\sigma^2(u)=\mathbb{E}\langle X,u\rangle^2$ when $\sigma^2(u)$ is ``large".
\end{abstract}

\section{Introduction}
The goal of this article is to study the covariance estimation problem with direction dependence accuracy. To give some idea of what one should expect, let us first describe the situation in the one dimensional case.

Let $x \in \mathbb{R}$ be a centred random variable with a finite fourth moment and consider $\kappa$ for which
\begin{equation} \label{ineq:moment_equivalence_onedim}
(\mathbb{E}x^4)^{1/4}\le \kappa (\mathbb{E}x^2)^{1/2}.
\end{equation}
Let $\phi:\mathbb{R}^N \times [0,1] \to \mathbb{R}$ be a procedure whose goal is to guess the variance of $x$: It receives as data $N$ independent copies of $x$ and a parameter $\delta \in (0,1)$, and returns a real number
$$
\widehat{\nu}= \phi(x_1,...,x_n,\delta).
$$
The procedure's error is $\eps$ if for any centred random variable $x$, with probability at least $1-\delta$ with respect to the $N$-product measure endowed by $x$,
$$
|\widehat{\nu}-\mathbb{E}x^2| \leq \eps.
$$
It is straightforward to identify the best performance one can hope for. Let $p=\log(2/\delta)/2N$, set $\alpha>0$, and define the law of $x$ by
\begin{equation*}
x=
\begin{cases}
    -\alpha, \quad \text{with probability $\frac{p}{2}$},\\
     \ \ 0, \quad \text{with probability $1-p$},\\
    \ \  \alpha, \quad \text{with probability $\frac{p}{2}$}.
\end{cases}
\end{equation*}

Clearly, with probability $(1-p)^N \ge 2\delta$, all the $N$ independent copies  $x_1,\ldots,x_N$ satisfy $x_i=0$, and for such samples no procedure can distinguish $x$ and $0$. Hence, the error in one of the two cases is at least $\mathbb{E}x^2/2$. In other words,
\begin{equation*}
\varepsilon\ge \frac{1}{2}\mathbb{E}x^2= \frac{1}{2}\alpha^2p = \frac{1}{2\sqrt{2}}\sqrt{\mathbb{E}x^4}\sqrt{\frac{\log(2/\delta)}{N}},
\end{equation*}
implying that the best performance one can hope for is that with probability at least $1-\delta$,
\begin{equation}
\label{ineq:best_bound_onedim}
    |\widehat{\nu} - \mathbb{E}x^2|\le C\sqrt{\mathbb{E}x^4}\sqrt{\frac{\log(1/\delta)}{N}};
\end{equation}
here $C$ is an absolute constant, and in particular it is strictly positive and does not depend on $x$ or the procedure $\phi$.

Once this benchmark is set, consider cases in which \eqref{ineq:moment_equivalence_onedim} holds. Then \eqref{ineq:best_bound_onedim} implies that
\begin{equation} \label{eq:sharp_one_dimensional}
|\widehat{\nu} - \mathbb{E}x^2|\le C\kappa^2 \mathbb{E}x^2 \sqrt{\frac{\log(1/\delta)}{N}}.
\end{equation}
As it happens, the \emph{trimmed mean} or the \emph{median of means} satisfy \eqref{eq:sharp_one_dimensional} (see \cite{lugosi2019mean}), showing that this performance is indeed optimal.

Note that the assumption that $x$ has a finite fourth moment is necessary. Indeed, if $x$ does not have a finite fourth moment then $x^2$ does not have a finite variance, resulting in a slower optimal error rate (see \cite[Theorem 3.1]{devroye16}). It also follows from \cite[Theorem 3.1]{devroye16} that the norm equivalence \eqref{ineq:moment_equivalence_onedim} is necessary if the goal is to obtain an error that scales like $\sigma^2/\sqrt{N}$.

Given the bounds in the one-dimensional case, a natural question is how to construct an optimal multi-dimensional covariance estimation procedure. However, even identifying the best possible error that one can hope for is challenging. We will explore this question in what follows, and then design an estimator that is optimal in a sense we shall clarify immediately.

\vskip0.4cm

\begin{definition}
A \emph{covariance estimation procedure} in $\mathbb{R}^d$ is a function $\phi:\mathbb{R}^{d \times N} \times [0,1] \to \mathbb{R}^{d \times d}$. Given $0<\delta<1$ and $X_1,...,X_N$, --- $N$ independent copies of a centred random vector $X$, the procedure returns a positive semi-definite matrix
$$
\hat{\Sigma}= \phi(X_1,...,X_N,\delta).
$$
The procedure performs with accuracy $\eps$ if for any centred random vector $X$, with probability at least $1-\delta$ with respect to the $N$-product measure endowed by $X$,
$$
\|\hat{\Sigma}- \Sigma_X \|_{2 \to 2} \leq \eps.
$$
Here and in what follows, $\Sigma_X$ is the covariance matrix of $X$ and $\| \cdot \|_{2 \to 2}$ is the ($\ell_2 \to \ell_2$) operator norm.
\end{definition}

The current state of the art upper estimate on the performance of a covariance estimation procedure is from \cite{abdalla2022covariance,oliveira2022improved}. To formulate that result we need a suitable version of \eqref{ineq:moment_equivalence_onedim}.
\begin{definition}
\label{def:L_4-L_2-assumption}
A centred random vector $X\in \mathbb{R}^d$ satisfies an $L_4-L_2$ norm equivalence with constant $\kappa$ if for every $u\in S^{d-1}$,
\begin{equation} \label{eq:norm-equiv-def}
    (\mathbb{E}\langle X,u\rangle^4)^{1/4} \le \kappa (\mathbb{E}\langle X,u\rangle^2)^{1/2}.
\end{equation}
\end{definition}

\begin{remark}
Since the $L_4(X)$ norm dominates the $L_2(X)$ norm, $\kappa$ in \eqref{eq:norm-equiv-def} must be at least $1$.
\end{remark}

In \cite{abdalla2022covariance} and \cite{oliveira2022improved} the authors construct a covariance estimator which satisfies that, for every centred random vector $X$, with probability at least $1-\delta$ with respect to the $N$-product measure endowed by $X$,
\begin{equation}
\label{ineq:sharp_covariance_operatornorm}
    \|\widehat{\Sigma}-\Sigma_X\|_{2 \to 2} \le C\left( \lambda_1\sqrt{\frac{r(\Sigma)+\log(1/\delta)}{N}}\right),
\end{equation}
where $C$ is an absolute constant, $\lambda_1\ge\ldots\ge \lambda_d\ge 0$ are the (ordered) eigenvalues of the covariance matrix $\Sigma$, and $r(\Sigma)$ is its effective rank; that is,
\begin{equation*}
    r(\Sigma):=\frac{\sum_{i=1}^d\lambda_i}{\lambda_1}.
\end{equation*}

The fluctuations in \eqref{ineq:sharp_covariance_operatornorm} depend on the behaviour of the ``worst-case" one-dimensional marginal of $X$, that is, on
\begin{equation*}
\lambda_1\sqrt{\frac{\log(1/\delta)}{N}};
\end{equation*}
therefore \eqref{ineq:sharp_covariance_operatornorm} does not give useful information on the variance of individual one-dimensional marginals. Ideally, one would like to have a covariance estimator for which $\|\widehat{\Sigma}-\Sigma_X\|_{2 \to 2}$ is small, and for ``many" directions $u \in S^{d-1}$,
$$
|\langle (\widehat{\Sigma}- \Sigma_X) u,u \rangle |
$$
scales like the optimal one-dimensional estimator. In other words, setting $\sigma^2(u):=\mathbb{E}\langle X,u\rangle^2$,
\begin{equation}
\label{ineq:optimal_1d}
    |\langle (\widehat{\Sigma}-
\Sigma_X) u,u \rangle |\le C\sigma^2(u) \sqrt{\frac{\log(1/\delta)}{N}}.
\end{equation}
\vskip0.4cm
Our aim here is to explore the best possible behaviour of a ``directional sensitive" covariance estimator:
\begin{tcolorbox} \label{box:Question}
\begin{Question} \label{qu:main}
What is the best possible function $S(u):S^{d-1} \rightarrow \mathbb{R}_{+}$ for which there is an estimator $\widehat{\Sigma}$ that satisfies (under minimal assumptions on $X$) that with probability at least $1-\delta$, for every $u\in S^{d-1}$,
\begin{equation}
\label{eq:key_question}
    \langle (\widehat{\Sigma}-\Sigma_X)u,u\rangle \leq  C\sigma^2(u) \sqrt{\frac{\log(1/\delta)}{N}} + S(u) ?
\end{equation}
\end{Question}
\end{tcolorbox}
\noindent An answer to Question \ref{qu:main} would imply that whenever $\sigma^2(u)\sqrt{\log(1/\delta)/N}$ dominates $S(u)$, the multi-dimensional estimator is also an optimal one-dimensional estimator of the variance of the marginal $\langle X,u\rangle$.

Throughout the article we assume without loss of generality that:
\begin{tcolorbox}
\begin{assumption} \label{basic_assumption}
The random vector $X$ has a symmetric, absolutely continuous density, and an invertible covariance matrix $\Sigma$; in particular, $X$ is centred. In addition, $X$ satisfies an $L_4$-$L_2$ norm equivalence with constant $\kappa$.
\end{assumption}
\end{tcolorbox}

The fact that Assumption \ref{basic_assumption} can be made without loss of generality is standard: the first part is evident by a symmetrisation argument, and if the need arises, following a tensorisation with a small gaussian vector. The second assumption can be made because without it, the smallest error one can hope for is much larger than $\sim \sigma^2(u)/\sqrt{N}$.

Our main result is a sharp answer to Question \ref{qu:main}:
\begin{tcolorbox}
\begin{theorem}[Main Result]
\label{thm:main_result}
There exist an absolute constant $\kappa_E$ and a constant $\kappa_{up}(\kappa)$ depending only on $\kappa$ for which the following hold. For every $\delta\in (0,1)$, there exists an estimator $\widehat{\Sigma}(X_1,\ldots,X_N,\delta)$ satisfying, with probability at least $1-\delta$ with respect to the $N$-product measure endowed by $X$, that for every $u\in S^{d-1}$
\begin{equation}
\begin{split}
\label{ineq:master_upper_bound}
&|\langle (\widehat{\Sigma}-\Sigma_X)u,u\rangle|\\
&\le \kappa_{up}\left(\sigma^2(u)\sqrt{\frac{\log(1/\delta)}{N}}+ \max\left\{\sigma(u),\sqrt{\lambda_{\kappa_E{\log(1/\delta)}}}\right\}\sqrt{\frac{\sum_{i\ge \frac{\kappa_E}{2}\log(1/\delta)}\lambda_i}{N}}\right).
\end{split}
\end{equation}
\end{theorem}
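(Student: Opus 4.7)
The plan is to construct the estimator as a block sum of two pieces, corresponding to a subspace decomposition induced by the spectrum of $\Sigma$. Set $k := (\kappa_E/2)\log(1/\delta)$. The key observation is that after splitting $\mathbb{R}^d$ into the (approximate) top-$k$ eigenspace of $\Sigma$ and its orthogonal complement, the operator norm bound \eqref{ineq:sharp_covariance_operatornorm} on the complement already produces the tail term of \eqref{ineq:master_upper_bound}, while on the $k$-dimensional top part it suffices to match the $1$D rate uniformly in a space whose dimension is only $\sim\log(1/\delta)$.

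First I would run the estimator of \cite{abdalla2022covariance, oliveira2022improved} as a pilot $\widetilde\Sigma$ and let $V$ be the span of its top $k$ eigenvectors. A Davis--Kahan type argument converts the operator-norm guarantee \eqref{ineq:sharp_covariance_operatornorm} into quantitative closeness between $V$ and the true top-$k$ eigenspace of $\Sigma$, so that the compressions $P_V\Sigma P_V$ and $P_{V^\perp}\Sigma P_{V^\perp}$ recover the top and tail parts of the spectrum up to absolute constants, and $\|P_{V^\perp}\Sigma P_V\|_{2\to 2}$ is controllably small. I would then apply \eqref{ineq:sharp_covariance_operatornorm} to the tail sample $(P_{V^\perp} X_i)_{i\le N}$: its leading eigenvalue is of order $\lambda_k$ and its total trace is $\sum_{i\ge k}\lambda_i$, yielding an operator-norm error of order $\sqrt{\lambda_k\sum_{i\ge k}\lambda_i/N}$, provided $\kappa_E$ is taken large enough that the additive $\lambda_k\sqrt{\log(1/\delta)/N}$ term is absorbed into the main one.

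On $V$ I would build an estimator $\widehat\Sigma_V$ with the genuinely directional $1$D rate. The construction is to apply a robust scalar procedure (trimmed mean or median-of-means) to $(\langle X_i,u\rangle^2)_{i\le N}$ for every $u$ in a fine net of $V\cap S^{d-1}$, polarise the diagonal quadratic values into bilinear forms, and assemble $\widehat\Sigma_V$. Because $\dim V\sim\log(1/\delta)$, the net has cardinality $e^{O(\log(1/\delta))}$, so a union bound still affords the $\sqrt{\log(1/\delta)/N}$ rate at each net point, and the $L_4$--$L_2$ equivalence of Definition \ref{def:L_4-L_2-assumption} supplies the variance control that the trimmed-mean analysis of \cite{lugosi2019mean} requires. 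Lipschitz continuity of the scalar estimator as a function of $u$ transfers the bound from the net to all of $V\cap S^{d-1}$.

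Setting $\widehat\Sigma:=\widehat\Sigma_V+\widehat\Sigma_{V^\perp}$ and decomposing $u=u_V+u_\perp$, the master bound follows by expanding $\langle(\widehat\Sigma-\Sigma)u,u\rangle$ into the two diagonal blocks plus the cross term $-2\langle\Sigma u_V,u_\perp\rangle$. The $V$-block contributes $\sigma^2(u_V)\sqrt{\log(1/\delta)/N}\le\sigma^2(u)\sqrt{\log(1/\delta)/N}$; the $V^\perp$-block contributes $\|u_\perp\|^2\sqrt{\lambda_k\sum_{i\ge k}\lambda_i/N}$, which matches the second master term when $\sigma(u)\lesssim\sqrt{\lambda_k}$ and is dominated by $\sigma(u)\sqrt{\sum_{i\ge k}\lambda_i/N}$ otherwise; the cross term is a purely population quantity controlled by the Davis--Kahan step. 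The main obstacle I expect is Step 3: producing a single matrix output whose quadratic form simultaneously realises a trimmed mean of $(\langle X_i,u\rangle^2)_i$ for every $u$. Existing multi-dimensional covariance estimators only control operator norm, a strictly weaker guarantee; reconciling them with direction-by-direction optimality requires the net/polarisation argument above, and it is exactly there that the $L_4$--$L_2$ hypothesis pays off, since polarisation of fourth-moment control is what keeps the implicit constants finite.
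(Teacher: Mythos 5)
Your plan diverges from the paper's at the decisive step, and the divergence introduces a gap that I do not think can be repaired without extra hypotheses.

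The paper does not attempt to recover the top-$k$ eigenspace of $\Sigma$. Instead (Sections \ref{sec:Oracle}--\ref{sec:Estimation_Orthogonal}) it builds a data-dependent \emph{distance oracle} $\widehat\psi$ that merely distinguishes directions of large variance from directions of small variance, and then chooses $E$ by minimising $\sup_{u\in\mathcal Z\cap S^{d-1}}\widehat\psi(u)$ over codimension-$r$ subspaces $\mathcal Z$, putting $E=\mathcal Z^{\perp}$. This $E$ is emphatically \emph{not} an (approximate) eigenspace of $\Sigma$: all that is known is that $\sup_{u\in E^\perp\cap S^{d-1}}\sigma^2(u)\le\kappa_4\lambda_r$. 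Consequently the population cross term $\mathbb E\langle X,P_Eu\rangle\langle X,P_{E^\perp}u\rangle$ does \emph{not} vanish and cannot be dismissed; the entire Section \ref{sec:Chaining} (Theorem \ref{prop:Estimation_correlation_term}) is devoted to \emph{estimating} it via a generic-chaining/majorizing-measure construction, decomposing $P_Eu$ along an admissible sequence $\pi_s P_E u$ and invoking uniform median-of-means estimates for each link.

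Your route replaces this with a Davis--Kahan step from a pilot $\widetilde\Sigma$, and you then treat the cross term $-2\langle\Sigma u_V,u_\perp\rangle$ as a ``purely population quantity controlled by the Davis--Kahan step''. This is where the proof breaks. First, Davis--Kahan bounds $\|P_{V^\perp}\Sigma P_V\|_{2\to 2}$ by the pilot error $\|\widetilde\Sigma-\Sigma\|_{2\to 2}$ divided by the eigengap $\lambda_k-\lambda_{k+1}$, and Assumption \ref{basic_assumption} imposes no eigengap whatsoever; when $\lambda_k=\lambda_{k+1}$ the bound is vacuous, yet \eqref{ineq:master_upper_bound} must still hold. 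Second, even when a gap is available, the pilot error scales as $\lambda_1\sqrt{(r(\Sigma)+\log(1/\delta))/N}$, so the Davis--Kahan estimate of the cross term is of order $\lambda_1^2\sqrt{\tr(\Sigma)/N}/(\lambda_k-\lambda_{k+1})$; this is generically far larger than the required directional target $\max\{\sigma(u),\sqrt{\lambda_{\kappa_E n}}\}\sqrt{\sum_{i\ge r/2}\lambda_i/N}$, and no choice of $\kappa_E$ helps. The paper's ability to beat the worst-case operator-norm scale on mixed directions comes precisely from \emph{estimating} the cross term with direction-dependent accuracy rather than upper-bounding it by a worst-case spectral-perturbation argument.

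A secondary, more repairable issue is your treatment of the $V$-block: a net in $V\cap S^{d-1}$ together with ``Lipschitz continuity of the scalar estimator in $u$'' does not obviously transfer the $1$D rate to all of $V$, because the relevant Lipschitz modulus is with respect to the unknown $L_2(X)$ metric, not the Euclidean one, and under only an $L_4$--$L_2$ moment assumption the trimmed-mean map $u\mapsto\widehat\nu(u)$ need not be Euclidean-Lipschitz with a usable constant. The paper sidesteps this by bounding the \emph{uniform} deviation of a median-of-means estimator over $B_E$ via VC theory (Theorem \ref{thm:mendelson_VC} and Lemma \ref{lem:Warrenslemma}), a route that avoids any Lipschitz argument. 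You would need to either import that VC argument or use your own distance-oracle-style estimate of the $L_2$ metric to make the net step rigorous.
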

\end{tcolorbox}
\begin{remark}
The reason behind the notation $\kappa_E$ will become clear in what follows.
\end{remark}

For a positive semi-definite matrix $\Sigma$ with (ordered) eigenvalues $\lambda_1\ge \ldots\ge \lambda_d$, let $\mathcal{P}_{\preceq\Sigma}$ be the class of all centred vectors $d$-dimensional vectors $X$ whose covariance $\Sigma_X$ satisfies
\begin{equation*}
u^T\Sigma_X u\le u^T\Sigma u \quad \text{for every $u\in S^{d-1}$}.
\end{equation*}
Our second main result complements Theorem \ref{thm:main_result}. We show that whenever it is possible to give a non-trivial estimate to \eqref{eq:key_question}, Theorem \ref{thm:main_result} is indeed optimal (up to an absolute constant):

\begin{tcolorbox}
\begin{theorem}[Lower Bound]
\label{thm:second_main_result} 
Let $\Sigma$ be a positive semi-definite matrix and set $N\ge 18$. Suppose that there exist a constant $\gamma$ and an estimator $\widehat{\Sigma}(x_1,\ldots,x_N,\delta)$ which satisfies that for every $0<\delta<1$ and for every $X\in \mathcal{P}_{\preceq\Sigma}$,
\begin{equation*}
\mathbb{P}\left(\exists u\in S^{d-1}:|\langle (\widehat{\Sigma}-\Sigma_X)u,u\rangle|\ge \gamma \sigma^2(u)\sqrt{\frac{\log(1/\delta)}{N}}+S(u)\right)\le \delta.
\end{equation*}
Then there exist constants $c_1,\kappa_0$ depending only on $\gamma$ and $\kappa$ for which the following dichotomy holds, for every $u\in S^{d-1}$: either
\begin{equation*}
    S(u)\ge c_1\lambda_1\sqrt{\frac{\log(1/\delta)}{N}},
\end{equation*}
or
\begin{equation}
\label{ineq:master_lower_bound}
\begin{split}
& S(u) \\
&\ge \kappa_{low}\left(\sigma^2(u)\sqrt{\frac{\log(1/\delta)}{N}}+ \max\left\{\sigma(u),\sqrt{\lambda_{\kappa_0{\log(1/\delta)}}}\right\}\sqrt{\frac{\sum_{i\ge \kappa_0\log(1/\delta)}\lambda_i}{N}}\right).
\end{split}
\end{equation}
\end{theorem}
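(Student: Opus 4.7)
The plan is to lower bound $S(u)$ by each quantity in \eqref{ineq:master_lower_bound} through a Le Cam-type two-point comparison inside $\mathcal{P}_{\preceq\Sigma}$, using test distributions that satisfy the $L_4$-$L_2$ norm equivalence with constant $\kappa$. The dichotomy with $c_1\lambda_1\sqrt{\log(1/\delta)/N}$ is included so that, whenever $S(u)$ is smaller than that quantity, the perturbation parameters in the comparisons below can be chosen legally inside $\mathcal{P}_{\preceq\Sigma}$.

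For the one-dimensional term $\sigma^2(u)\sqrt{\log(1/\delta)/N}$, I would compare two centred Gaussians on $\R^d$: $X\sim N(0,\Sigma)$ and $X'\sim N(0,\Sigma-\Delta\, uu^{\top})$ with $\Delta\asymp \sigma^2(u)\sqrt{\log(1/\delta)/N}$ and $\Delta\leq 1/(u^T\Sigma^{-1}u)$; both covariances are $\preceq\Sigma$ by construction, and Gaussians satisfy $L_4$-$L_2$ with an absolute constant that can be absorbed into $\kappa$. A direct KL computation gives $\mathrm{KL}(X'\|X)\asymp \Delta^2(u^T\Sigma^{-1}u)^2$, so that the $N$-fold products are within total variation $\leq 1-2\delta$ by Pinsker, and Le Cam forces $S(u)\gtrsim \sigma^2(u)\sqrt{\log(1/\delta)/N}$ (after invoking the dichotomy to rule out the degenerate case in which $u^T\Sigma^{-1}u$ is large). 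For the tail terms, let $k=\kappa_0\log(1/\delta)$, fix an eigenbasis $v_1,\ldots,v_d$ of $\Sigma$ with eigenvalues $\lambda_1\geq\ldots\geq\lambda_d$, and work with the sparse spike
\[
X \;=\; \sum_{i\geq k}\eps_i\eta_i\sqrt{\lambda_i/p}\,v_i,
\]
where the $\eps_i$ are i.i.d.\ Rademacher, the $\eta_i$ are i.i.d.\ Bernoulli of parameter $p\in[1/\kappa^4,1]$, and $p$ is tuned so that $\Sigma_X=\sum_{i\geq k}\lambda_iv_iv_i^{\top}\preceq\Sigma$ and, by independence combined with Khintchine's inequality, $L_4$-$L_2$ holds uniformly in direction. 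Comparing $X$ with a rank-one perturbation $Y$ whose covariance is $\Sigma_X+\Delta ww^{\top}$, with $\Delta\asymp\sqrt{\lambda_k\sum_{i\geq k}\lambda_i/N}$ and $w$ chosen either as the tail projection of $u$ or as the top tail eigenvector $v_k$, the same KL-plus-Le Cam route produces the $\sigma(u)\sqrt{\sum_{i\geq k}\lambda_i/N}$ branch and the $\sqrt{\lambda_k}\sqrt{\sum_{i\geq k}\lambda_i/N}$ branch respectively.

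The main obstacle is the simultaneous accounting: the perturbation $\Delta$ must be small enough to keep $Y\in\mathcal{P}_{\preceq\Sigma}$ (this is where the Sherman-Morrison bound on $u^T\Sigma^{-1}u$ and the dichotomy interact), yet large enough to exhibit the required directional error in $u$; $p$ must preserve $L_4$-$L_2$ with constant $O(\kappa)$ in \emph{every} direction, not only those inside the tail subspace; and the truncation index $k$ must agree, up to an absolute factor, with the one appearing in Theorem~\ref{thm:main_result}. The choice $k=\kappa_0\log(1/\delta)$ with $\kappa_0$ taken sufficiently large relative to $\gamma$ and $\kappa$ is exactly what allows these three quantitative constraints to close on each other; the rest is a careful chase of absolute constants through the comparisons above.
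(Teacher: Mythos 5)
Your proposal is a genuinely different route from the paper, but as written it has gaps that would prevent it from closing. The paper does not build distributions from scratch at all; it reduces the two ``tail'' branches to two existing minimax lower bounds. The branch $\sqrt{\lambda_{\kappa_0 n}}\sqrt{\sum_{i\ge\kappa_0 n}\lambda_i/N}$ (with $n=\log(1/\delta)$) is obtained by restricting to a well-chosen subspace $U_\ell^\perp$ and invoking the Lounici minimax lower bound (Theorem \ref{lem:minimax_lounici_covariance}) on that subspace, while the cross term $\sigma(u)\sqrt{\sum_{i\ge\kappa_0 n}\lambda_i/N}$ comes from a clever construction $X_\mu$ for which $\langle X_\mu,u\rangle\, P_{E^\perp}X_\mu$ is exactly Gaussian with mean $\mu$, so that estimating the off-diagonal block is literally a Gaussian mean estimation problem, and one invokes the Lugosi--Mendelson/Depersin--Lecu\'e lower bound (Corollary \ref{cor:lower_bound_gaussianwidth_general_norm}). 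Both of those underlying results are multi-point (Assouad/Fano-type) statements, which is exactly why they deliver a $\sqrt{\text{(tail trace)}/N}$ rate.

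Your two-point Le Cam comparisons cannot reproduce this. Concretely: (i) for the tail branch, a rank-one perturbation of size $\Delta\asymp\sqrt{\lambda_{\kappa_0 n}\sum_{i\ge\kappa_0 n}\lambda_i/N}$ gives a per-sample KL of order $(\Delta/\lambda_{\kappa_0 n})^2$, so the $N$-sample KL is of order $\sum_{i\ge\kappa_0 n}\lambda_i/\lambda_{\kappa_0 n}\ge 1$ --- the two hypotheses are not close, and Le Cam yields nothing. This is the generic obstruction: a two-point argument gives a $1/\sqrt N$ rate, not $\sqrt{r_{\mathrm{eff}}/N}$; for the latter you need a multi-point scheme, which is what Lounici supplies in the paper. (ii) Your sparse-spike laws and their rank-one perturbations are mutually singular (the supports $\{0,\pm\sqrt{\lambda_i/p}\}$ change when $\lambda_i$ changes), so the KL you propose to compute is $+\infty$; and with $\Sigma_X$ set equal to the full tail of $\Sigma$ there is in any case no room to add $\Delta v_kv_k^\top$ inside $\mathcal{P}_{\preceq\Sigma}$. (iii) For the $\sigma^2(u)\sqrt{n/N}$ term, perturbing by $\Delta\, uu^\top$ gives $N\cdot\mathrm{KL}\asymp n\bigl(\sigma^2(u)\,u^\top\Sigma^{-1}u\bigr)^2$, and the product $(u^\top\Sigma u)(u^\top\Sigma^{-1}u)$ can be arbitrarily large for mixed directions; the dichotomy on $S(u)$ vs.\ $\lambda_1\sqrt{n/N}$ does nothing to control $u^\top\Sigma^{-1}u$, so the ``degenerate case'' you appeal to is not ruled out. (This particular branch can actually be saved by perturbing in the direction $\Sigma u$ rather than $u$, but that is not what you wrote, and it does not help with (i) or (ii).) Finally, you do not explain how the cross term $\sigma(u)\sqrt{\sum_{i\ge\kappa_0 n}\lambda_i/N}$ --- the subtlest part, which required the $X_\mu$ construction in the paper --- would emerge from ``the same KL-plus-Le Cam route''; a rank-one perturbation in the tail projection of $u$ moves $\langle\Sigma u,u\rangle$ only by $\Delta\|P_{E^\perp}u\|^2$, and the scaling does not line up.
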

\end{tcolorbox}

\vskip0.4cm
Before we continue any further, let us fix some notation. In what follows, $B_2^d$ is the Euclidean unit ball in $\mathbb{R}^d$. For a subspace $E \subset \mathbb{R}^d$, let $B_E = B_2^d \cap E$ be the Euclidean unit ball in $E$, and set $P_E$ to be the orthogonal projection onto $E$. $I_d$ denotes the $d\times d$ identity matrix.

Let $g$ be the standard gaussian random vector in $\mathbb{R}^d$ and define the gaussian mean-width of a set $T\subset \mathbb{R}^d$ by
\begin{equation*}
    \ell^{\ast}(T):=\mathbb{E}\sup_{t\in T}\langle g,t\rangle.
\end{equation*}
A set $T \subset \mathbb{R}^d$ is symmetric if for every $t\in T$ we have that $-t\in T$. Let $$
\|\cdot\|_{T}:=\sup_{t\in T}\langle t,\cdot\rangle
$$
and note that $\|\cdot\|_{T}$ is a norm if $T$ is symmetric and spans $\mathbb{R}^d$. For a vector $X$ taking values in $\mathbb{R}^d$, $D$ and $\partial D$ are the unit ball and the unit sphere in $(\mathbb{R}^d,L_2(X))$, respectively.

Finally, absolute constants are denoted by $C$, $c$, etc. These are positive numbers that are independent of any of the parameters of the problem. Their values may change from line to line. If a constant depends on some parameter $\rho$, we specify that by writing  $c(\rho)$. We use $a \lesssim b$ if there is an absolute constant $c$ for which $a \leq cb$, and $a \sim b$ if $c_1 b \leq a \leq c_2b$ for absolute constants $c_1$ and $c_2$. We denote by $\kappa,\kappa_1,\kappa_2,...$ constants whose values remain unchanged throughout this article.

\vskip0.3cm
This article is organized as follows. The proof of Theorem \ref{thm:second_main_result} is presented in Section \ref{sec:Lower_Bound}. 
The proof of Theorem \ref{thm:main_result} requires some technical machinery, that is developed in Sections \ref{sec:Oracle} and \ref{sec:Estimation_Orthogonal}. In Section \ref{sec:Oracle}, we construct a data dependent function $\widehat{\psi}$ that identifies for points in $S^{d-1}$ whether the corresponding marginals have ``large or small" variance (in a sense to be clarified in what follows). In Section \ref{sec:Estimation_Orthogonal}, we show how such a function $\widehat{\psi}$ can be used to decompose $\mathbb{R}^d$ to subspaces $E$ and $E^{\perp}$ corresponding to ``large"  and ``small" directions. The proof of Theorem \ref{thm:main_result} is relatively simple for directions in either $E$ or $E^{\perp}$. The more subtle part is proving the claim for directions that are a ``mixture", which is done in Section \ref{sec:Chaining} using a chaining argument.

\vskip0.3cm

\section{Lower Bounds} \label{sec:Lower_Bound}
As noted previously, identifying the best possible performance of a directional-dependent covariance estimator is nontrivial, and this section is devoted to that goal. Specifically, exploring how big the function $S$ has to be if one is to have any hope of establishing \eqref{eq:key_question} under the minimal assumptions on $X$ from Assumption \ref{basic_assumption}.

Recall that the eigenvalues of $\Sigma \in \mathbb{R}^{d\times d}$ are $\lambda_1\ge\ldots\ge \lambda_d$, and its effective rank is $r(\Sigma) =  \sum_{i=1}^d\lambda_i/\lambda_1$. Let $\mathcal{P}_{\Sigma}$ be the class of centred $d$-dimensional random vectors whose covariance matrix is $\Sigma$.
\vskip 0.1cm
Our starting point is a minimax lower bound on the performance of covariance estimation procedure from \cite[Theorem 2]{lounici2014high}.
\begin{theorem}
\label{lem:minimax_lounici_covariance}
There exist absolute constants $\beta$ and $\kappa_1$ for which the following holds. For any positive semi-definite matrix $\Sigma \in \mathbb{R}^{d\times d}$,
\begin{equation}
\label{ineq:minimax_lounici_covariance}
    \inf_{\widehat{\Sigma}}\sup_{X \in \mathcal{P}_{\Sigma}} \mathbb{P}\left(\|\widehat{\Sigma}-\Sigma_X\|_{2 \to 2} \ge \kappa_1 \lambda_1\sqrt{\frac{r(\Sigma)}{N}}\right)\ge \beta,
\end{equation}
where the infimum is over all covariance estimators $\widehat{\Sigma}$.
\end{theorem}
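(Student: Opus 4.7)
The plan is to establish this minimax lower bound via an information-theoretic Fano-type argument applied to a suitable family of Gaussian distributions whose covariance matrices form an operator-norm packing near $\Sigma$. Since a covariance estimation procedure is, by definition, a data-driven function not allowed to depend on $\Sigma$, exhibiting a sub-family of distributions on which estimation is information-theoretically hard automatically furnishes the minimax lower bound on the full class.

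Set $k := \lceil r(\Sigma)/2 \rceil$ and let $V$ denote the span of the top $k$ eigenvectors of $\Sigma$. By the definition of the effective rank, at least half of $\sum_i \lambda_i$ must come from eigenvalues that are comparable to $\lambda_1$, so every eigenvalue of $\Sigma|_V$ is of order $\lambda_1$. I would use a Varshamov--Gilbert construction to select unit vectors $\{u_\theta : \theta \in \Theta\}\subset V$ with $|\Theta| \ge 2^{ck}$ and $\|u_\theta u_\theta^\top - u_{\theta'} u_{\theta'}^\top\|_{2 \to 2} \gtrsim 1$ for $\theta \ne \theta'$. The candidate covariances
\begin{equation*}
\Sigma_\theta := \Sigma + \eta \lambda_1 u_\theta u_\theta^\top, \qquad \eta := c_0 \sqrt{r(\Sigma)/N},
\end{equation*}
are then pairwise $\gtrsim \lambda_1\sqrt{r(\Sigma)/N}$ apart in operator norm and remain positive semi-definite for small $c_0$.

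The next step is to control the Kullback--Leibler divergences between the $N$-fold centred Gaussian products indexed by $\theta$. Using the exact Gaussian KL formula together with the bound $\mathrm{KL}(N(0,\Sigma_\theta)\,\|\,N(0,\Sigma_{\theta'})) \lesssim \|\Sigma^{-1/2}(\Sigma_\theta - \Sigma_{\theta'})\Sigma^{-1/2}\|_F^2$, valid when the perturbations are small, and exploiting that $u_\theta \in V$ where $\|\Sigma^{-1}\|_{2\to 2} \lesssim 1/\lambda_1$, a direct computation gives
\begin{equation*}
\mathrm{KL}\bigl(N(0,\Sigma_\theta)^{\otimes N}\,\big\|\,N(0,\Sigma_{\theta'})^{\otimes N}\bigr) \lesssim N\eta^2 \lesssim r(\Sigma) \lesssim \log|\Theta|.
\end{equation*}
Choosing $c_0$ small enough that this stays below the Fano threshold, Fano's inequality forces the probability of misidentifying $\theta$ from $N$ samples to be bounded below by a universal constant $\beta > 0$. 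Since an estimator achieving operator-norm error below $\eta\lambda_1/3$ would allow identification of $\theta$ by nearest-neighbour rounding, at least one of the $\Sigma_\theta$'s must defeat any estimator with probability $\ge \beta$, which delivers \eqref{ineq:minimax_lounici_covariance}.

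The main obstacle I expect is a technical mismatch with the literal definition of $\mathcal{P}_\Sigma$: the matrices $\Sigma_\theta$ do not all equal $\Sigma$. The standard way to reconcile this (followed in \cite{lounici2014high}) is either to reinterpret $\Sigma$ as a benchmark governing the eigenvalue profile rather than the exact covariance, or to recentre the construction by considering $\Sigma_\theta = \Sigma - \eta\lambda_1 \bar{P} + \eta\lambda_1 u_\theta u_\theta^\top$ for a fixed averaged rank-one projection $\bar P$, so that all candidate covariances are perturbations of a common reference that can in turn be absorbed into the constants. Ensuring that this recentring preserves both the operator-norm separation and the Frobenius-based KL bound simultaneously is the delicate calibration step.
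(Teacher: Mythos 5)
The paper does not prove this theorem: it is quoted directly from Lounici's work (cited as Theorem~2 of \cite{lounici2014high}), so there is no internal proof to compare your attempt against. Evaluating your proposal on its own merits, the Fano--packing strategy is indeed the right family of techniques, but the specific construction has a genuine gap.

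The claim that ``at least half of $\sum_i\lambda_i$ must come from eigenvalues that are comparable to $\lambda_1$,'' and hence that every eigenvalue of $\Sigma|_V$ is of order $\lambda_1$ when $V$ is the span of the top $k=\lceil r(\Sigma)/2\rceil$ eigenvectors, is false. Take $\lambda_1=1$ and $\lambda_2=\cdots=\lambda_d=\varepsilon$ with $(d-1)\varepsilon=9$: then $r(\Sigma)=10$, $k=5$, but $\lambda_5=\varepsilon$ is arbitrarily small, and almost all of the trace comes from eigenvalues that are tiny compared to $\lambda_1$. This is not a cosmetic slip --- it breaks the Fano calibration. Your perturbations $\Sigma_\theta=\Sigma+\eta\lambda_1u_\theta u_\theta^\top$ with $u_\theta\in V$ give
\begin{equation*}
\bigl\|\Sigma^{-1/2}(\Sigma_\theta-\Sigma_{\theta'})\Sigma^{-1/2}\bigr\|_F^2 \asymp \eta^2\lambda_1^2\,\|\Sigma^{-1/2}u_\theta\|^2\,\|\Sigma^{-1/2}u_{\theta'}\|^2 \ \text{terms},
\end{equation*}
and when $u_\theta$ aligns with the $k$-th eigenvector this is of order $\eta^2\lambda_1^2/\lambda_k^2$, not $\eta^2$. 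In the example above the per-sample KL is then $\sim\eta^2/\varepsilon^2$, which forces $c_0$ to depend on $\Sigma$ --- you lose the absolute constant $\beta$. To repair this, the perturbation must be weighted to cancel the $\Sigma^{-1/2}$ conjugation, as in the actual argument in the literature: use off-diagonal perturbations $\Sigma_\theta = \Sigma + \eta\sum_i a_{\theta,i}\sqrt{\lambda_1\lambda_i}(e_1e_i^\top+e_ie_1^\top)$ for Gilbert--Varshamov binary codes $a_\theta$. These are unit-normalized in the KL pseudometric (so the per-sample KL is $\lesssim\eta^2\|a_\theta-a_{\theta'}\|_0\lesssim\eta^2 r$), while the operator norm of $\Sigma_\theta-\Sigma_{\theta'}$ is controlled by the $\ell_2$ norm of its first row, which is $\gtrsim\eta\sqrt{\lambda_1\sum_i\lambda_i}\sim\eta\lambda_1\sqrt{r(\Sigma)}$, delivering the claimed rate with $\eta\sim 1/\sqrt{N}$.

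You also correctly flag that the statement of Theorem~\ref{lem:minimax_lounici_covariance}, read literally with $\mathcal{P}_\Sigma$ as defined (covariance exactly $\Sigma$), is defeated by the constant estimator $\widehat\Sigma\equiv\Sigma$; the intended reading, consistent with how the paper uses the result in Corollary~\ref{cor:minimax-modified}, is as a minimax bound over a neighbourhood of covariance matrices governed by the eigenvalue profile of $\Sigma$, and that is not a defect of your proposal. The defect is the eigenvalue-comparability claim and the resulting unbounded KL; fix the perturbation weights and the outline works.
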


Thanks to Theorem \ref{lem:minimax_lounici_covariance} we can establish the next fact, which will prove to be useful in what follows.

\begin{corollary} \label{cor:minimax-modified}
Let $\kappa_1$ and $\beta$ as in Theorem \ref{lem:minimax_lounici_covariance}. For every dimension $d$, covariance estimation procedure $\phi$, and invertible, positive semi-definite matrix $\Sigma \in \mathbb{R}^{d \times d}$, there is a centred random vector $Y \in \mathcal{P}_{\Sigma}$ for which
$$
\mathbb{P} \left( \left\| \Sigma^{-1/2} \phi(Y_1,...,Y_N,\delta)\Sigma^{-1/2} - I_k \right\|_{2 \to 2} \geq \kappa_1 \sqrt{\frac{d}{N}} \right) \geq \beta.
$$
\end{corollary}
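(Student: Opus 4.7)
The plan is a direct reduction: whitening transports the claim for general invertible $\Sigma$ to the minimax bound for the identity covariance, which is exactly the content of Theorem \ref{lem:minimax_lounici_covariance}.

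First, given the covariance estimation procedure $\phi$ and the target matrix $\Sigma$, I would build from them a covariance estimation procedure $\widehat{\phi}$ aimed at random vectors with covariance $I_d$. The construction is the obvious one: on input $(x_1,\ldots,x_N,\delta)\in (\mathbb{R}^d)^N\times[0,1]$, set
\begin{equation*}
\widehat{\phi}(x_1,\ldots,x_N,\delta):=\Sigma^{-1/2}\,\phi\bigl(\Sigma^{1/2}x_1,\ldots,\Sigma^{1/2}x_N,\delta\bigr)\,\Sigma^{-1/2}.
\end{equation*}
This is a well-defined covariance estimation procedure since $\Sigma$ is invertible and symmetric positive definite, and since $\phi$ returns positive semi-definite matrices (so does $\widehat{\phi}$).

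Second, I would apply Theorem \ref{lem:minimax_lounici_covariance} with the target covariance matrix equal to $I_d$. Its effective rank is $r(I_d)=d$ and its top eigenvalue is $1$, so the theorem produces a centred random vector $X\in\mathcal{P}_{I_d}$ such that
\begin{equation*}
\mathbb{P}\left(\bigl\|\widehat{\phi}(X_1,\ldots,X_N,\delta)-I_d\bigr\|_{2\to2}\ge\kappa_1\sqrt{\frac{d}{N}}\right)\ge\beta.
\end{equation*}

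Third, I would transport this back via $Y:=\Sigma^{1/2}X$. Then $Y$ is centred, its copies are $Y_i=\Sigma^{1/2}X_i$, and its covariance matrix is $\Sigma^{1/2}I_d\Sigma^{1/2}=\Sigma$, so $Y\in\mathcal{P}_\Sigma$. By the definition of $\widehat{\phi}$,
\begin{equation*}
\widehat{\phi}(X_1,\ldots,X_N,\delta)-I_d=\Sigma^{-1/2}\bigl(\phi(Y_1,\ldots,Y_N,\delta)-\Sigma\bigr)\Sigma^{-1/2}=\Sigma^{-1/2}\phi(Y_1,\ldots,Y_N,\delta)\Sigma^{-1/2}-I_d,
\end{equation*}
so the event in the previous display coincides with the event in the statement of the corollary. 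This yields the desired lower bound on $Y$.

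There is essentially no obstacle: the only point to check is that the construction $\widehat{\phi}$ is a legal covariance estimation procedure and that the random vector $Y$ obtained after whitening satisfies $Y\in\mathcal{P}_\Sigma$; both are immediate from invertibility and symmetry of $\Sigma^{1/2}$. The apparent $I_k$ in the statement is a typo for $I_d$, and this is what the argument produces.
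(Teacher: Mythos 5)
Your proof is correct and is essentially the same argument as the paper's: whiten via $\Sigma^{1/2}$ to reduce to the identity-covariance case of Theorem \ref{lem:minimax_lounici_covariance}, then transport the resulting hard instance back by setting $Y=\Sigma^{1/2}X$. The only cosmetic difference is that the paper phrases the auxiliary procedure as a family $\phi_A$ indexed by an invertible matrix $A$ and then specializes to $A=\Sigma^{1/2}$, whereas you fix $A=\Sigma^{1/2}$ from the outset; you are also right that $I_k$ in the statement is a typo for $I_d$.
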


\begin{proof}
Let $\phi$ be a covariance estimation procedure. For any invertible, positive semi-definite matrix $A$, set
$$
\phi_A(X_1,...,X_N,\delta)=A^{-1} \phi(AX_1,...,AX_N,\delta) A^{-1},
$$
which is also a covariance estimation procedure in $\mathbb{R}^d$. By Theorem \ref{lem:minimax_lounici_covariance} for $\mathcal{P}_{I_d}$ (i.e., the set of random vectors whose covariance is the identity), there exists a random vector $Z=Z_A \in \mathcal{P}_{I_d}$ satisfying that with probability at least $\beta$,
\begin{equation} \label{eq:minimax-modified}
\|\phi_A(Z_1,...,Z_N,\delta)-I_d \|_{2 \to 2} = \|A^{-1}\phi(AZ_1,...,AZ_N,\delta)A^{-1}-I_d \|_{2 \to 2} \geq \kappa_1 \sqrt{\frac{d}{N}}.
\end{equation}
Next, consider an invertible positive semi-definite matrix $\Sigma$, let $Y=\Sigma^{1/2}Z$ and note that the covariance of $Y$ is $\Sigma$. Using \eqref{eq:minimax-modified} for $A=\Sigma^{1/2}$ and $Z=Z_A$, we have that
$$
\|\Sigma^{-1/2}\phi(Y_1,...,Y_N,\delta)\Sigma^{-1/2}-I_d \|_{2 \to 2} \geq \kappa_1 \sqrt{\frac{d}{N}},
$$
as claimed.
\end{proof}

Next, note that an estimate like \eqref{eq:key_question} is nontrivial only when
$$
S<\lambda_1\sqrt{\frac{\log(1/\delta)}{N}}.
$$
We will show that `beating' $\lambda_1\sqrt{\log(1/\delta)/N}$ (when that is possible), comes at a price: an additional `global term'. The argument presented here is essentially the same as in \cite[Proposition 1]{lugosi2020multivariate}, adapted to the covariance case. To formulate it, let $\beta$ and $\kappa_1$ be the constants from Corollary \ref{cor:minimax-modified}, and recall that $\partial D$ is the unit sphere in $(\mathbb{R}^d,L_2(X))$.

\begin{theorem} \label{prop_lowerbound_strongterm}
Suppose that there is a constant $\gamma$ for which the following holds. If there exists an estimator $\widehat{\Sigma}(x_1,\ldots,x_N,\delta)$ that satisfies for every $X\in \mathcal{P}_{\Sigma}$ and $\delta \le \min\{\beta,e^{-1}\}$, that
\begin{equation}
\label{ineq:strongterm_assumption}
\mathbb{P}\left( \exists u\in S^{d-1}: \ u^T(\widehat{\Sigma}(X_1,\ldots,X_N,\delta)-\Sigma_X)u\ge \gamma \sigma^2(u)\sqrt{\frac{\log(1/\delta)}{N}} +S\right) \le \delta,
\end{equation}
then there are constants $c_1$ and $\kappa_0(\gamma,\kappa_1)$ for which either
\begin{equation}
\label{ineq:StrongTerm}
  S \geq c_1\lambda_1\sqrt{\log(1/\delta)/N} \quad \text{or} \quad S\ge \frac{\kappa_1}{2}\sqrt{\lambda_{{ \kappa_0\log(1/\delta)}}}\sqrt{\frac{\sum_{i\ge \kappa_0\log(1/\delta)}\lambda_i}{N}}.
\end{equation}

\end{theorem}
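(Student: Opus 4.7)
The plan is to adapt the strategy of \cite[Proposition 1]{lugosi2020multivariate}: reduce the problem to a subspace of effective dimension $\Theta(\log(1/\delta))$ on which Lounici's minimax lower bound (Theorem \ref{lem:minimax_lounici_covariance}) directly contradicts the guarantee \eqref{ineq:strongterm_assumption} unless $S$ is sufficiently large. I will assume throughout that the first alternative fails, i.e.\ $S < c_1\lambda_1\sqrt{\log(1/\delta)/N}$, and aim to derive the second.

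Fix $k := \kappa_0\log(1/\delta)$ with $\kappa_0 = \kappa_0(\gamma,\kappa_1)$ to be tuned, let $E$ be the span of the top $k-1$ eigenvectors of $\Sigma$, let $F := E^{\perp}$, and set $\Sigma_F := P_F\Sigma P_F$. Then $\Sigma_F$ is positive semi-definite and supported on $F$, with top eigenvalue $\lambda_k$ and trace $\sum_{i\ge k}\lambda_i$. For any centred random vector $X$ supported on $F$ with covariance $\Sigma_F$, let $Y$ be an independent Gaussian on $E$ with covariance $P_E\Sigma P_E$; then $X+Y \in \mathcal{P}_\Sigma$, so \eqref{ineq:strongterm_assumption} applies to the data $(X_i + Y_i)_{i=1}^N$. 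Define the randomised projected estimator
\begin{equation*}
\Psi(X_1,\ldots,X_N,\delta) := P_F\,\widehat{\Sigma}(X_1+Y_1,\ldots,X_N+Y_N,\delta)\,P_F.
\end{equation*}
Since for $u\in S^{d-1}\cap F$ one has $\sigma_{X+Y}^2(u) = u^T\Sigma u = u^T\Sigma_F u \le \lambda_k$, the guarantee \eqref{ineq:strongterm_assumption} specialises to: with probability at least $1-\delta$,
\begin{equation*}
\sup_{u\in S^{d-1}\cap F}u^T(\Psi-\Sigma_F)u \le \gamma\lambda_k\sqrt{\log(1/\delta)/N} + S.
\end{equation*}

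Next I apply Theorem \ref{lem:minimax_lounici_covariance} to $\Psi$, viewed as a covariance estimator on $F$, with target $\Sigma_F$: some $X\in \mathcal{P}_{\Sigma_F}$ forces $\|\Psi - \Sigma_F\|_{2\to 2}\ge \kappa_1\sqrt{\lambda_k\sum_{i\ge k}\lambda_i/N}$ with probability at least $\beta$. Since $\delta \le \beta$, the two events intersect with positive probability. On that intersection, the operator-norm gap is realised by some unit $u\in F$; if it is realised by a \emph{positive} eigenvalue of $\Psi-\Sigma_F$, combining with the previous display yields
\begin{equation*}
S \ge \kappa_1\sqrt{\lambda_k\sum_{i\ge k}\lambda_i/N} - \gamma\lambda_k\sqrt{\log(1/\delta)/N},
\end{equation*}
and taking $\kappa_0$ large (depending only on $\gamma,\kappa_1$) absorbs the correction into half of the main term, yielding the second alternative with constant $\kappa_1/2$.

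The main obstacle is the mismatch between the two-sided operator norm in Lounici's bound and the one-sided guarantee $u^T(\widehat{\Sigma}-\Sigma_X)u\le(\cdots)$ in \eqref{ineq:strongterm_assumption}: the minimax gap could in principle be realised by a direction in which $\Psi$ under-estimates $\Sigma_F$, and \eqref{ineq:strongterm_assumption} says nothing about such directions. To close that case I would use $\Psi = P_F\widehat{\Sigma}P_F \succeq 0$, which forces $u^T(\Sigma_F-\Psi)u \le u^T\Sigma_F u \le \lambda_k$; a gap on that side then entails $\sum_{i\ge k}\lambda_i \lesssim \lambda_k N/\kappa_1^2$, a regime in which the second alternative is itself $O(\lambda_k)$ and the first alternative is the correct conclusion, recovered by running the same noise-addition reduction along the top eigenvector of $\Sigma$. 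The calibration of $\kappa_0$ is what balances these two regimes.
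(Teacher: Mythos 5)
Your high-level strategy --- restrict the one-sided guarantee \eqref{ineq:strongterm_assumption} to a subspace on which $\sigma^2(u)$ is small, then contradict Lounici's minimax lower bound there --- matches the paper's, and the noise-addition reduction for embedding a covariance problem on $F$ into $\mathcal{P}_\Sigma$ is fine. The gap is in the \emph{fixed} choice $k=\kappa_0\log(1/\delta)$. After intersecting events you obtain $S\ge\kappa_1\sqrt{\lambda_k\sum_{i\ge k}\lambda_i/N}-\gamma\lambda_k\sqrt{\log(1/\delta)/N}$ and claim a large $\kappa_0$ absorbs the correction into half the main term; this requires $4\gamma^2\lambda_k\log(1/\delta)\le\kappa_1^2\sum_{i\ge k}\lambda_i$, which no $\kappa_0$ can guarantee --- with a spectrum that is flat up to index $k$ and vanishes afterwards ($\lambda_i=1$ for $i\le k$, $\lambda_i=0$ for $i>k$), it reads $4\gamma^2\log(1/\delta)\le\kappa_1^2$, which fails for small $\delta$.

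The paper instead picks $\ell$ \emph{adaptively} so that $S\in(c_1\lambda_{\ell+1}\sqrt{\log(1/\delta)/N},\,c_1\lambda_\ell\sqrt{\log(1/\delta)/N}]$, and this does two things your fixed $k$ cannot. First, on $U_\ell^\perp$ the term $\gamma\sigma^2(u)\sqrt{\log(1/\delta)/N}\le\gamma\lambda_{\ell+1}\sqrt{\log(1/\delta)/N}$ is absorbed into $S$ itself (not into the minimax term), because $c_1\lambda_{\ell+1}\sqrt{\log(1/\delta)/N}<S$ holds by the definition of $\ell$; this is spectrum-free. Second, the paper proves $\ell\le\kappa_0\log(1/\delta)$ by a \emph{separate} application of Corollary~\ref{cor:minimax-modified} on $U_\ell$, comparing the normalized guarantee $\sup_{u\in U_\ell\cap S^{d-1}}u^T(\widehat\Sigma-\Sigma)u/\sigma^2(u)\le(c_1+\gamma)\sqrt{\log(1/\delta)/N}$ against the affine-invariant minimax lower bound $\kappa_1\sqrt{\ell/N}$; this bound on $\ell$ is what lets the conclusion be re-expressed at the index $\kappa_0\log(1/\delta)$. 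You have no analogue of this step, yet it is also what your argument would need in order to land at the advertised index. (Your observation about the one-sided guarantee versus the two-sided operator norm is well-taken, and the paper's proof does lean implicitly on $\widehat\Sigma\succeq 0$ at that point; but the repair you sketch via a ``top-eigenvector reduction'' inherits the same one-sidedness: the hard instance for one-dimensional variance estimation is distinguished by under-estimation, which \eqref{ineq:strongterm_assumption} does not constrain.)
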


\begin{remark}
In what follows we will encounter several terms of the form $C \log(1/\delta)$. To ease notation, we shall assume without loss of generality that these are integers.
\end{remark}

\begin{proof}
Let $(e_i)_{i\le d}$ be the canonical basis of $\mathbb{R}^d$ and set $c_1>0$. Assume without loss of generality that $\Sigma$ is an invertible diagonal matrix with eigenvectors $e_1,\ldots,e_d$, and consider the case $S \leq c_1 \lambda_1 \sqrt{\log(1/\delta)/N}$.

Let $\ell$ be the unique integer for which
$$
S\in (c_1\lambda_{\ell+1}\sqrt{\log(1/\delta)/N}, c_1\lambda_{\ell}\sqrt{\log(1/\delta)/N}]
$$
and set $U_\ell:=\text{span}(e_1,\ldots,e_\ell)$. It is evident that, for every $u\in B_2^d\cap U_\ell$,
\begin{equation} \label{ineq:first_inequality_for_S_proof}
\begin{split}
& S\le c_1\sigma^2(u)\sqrt{\log(1/\delta)/N} \quad \text{and}
\\
& \sup_{u\in U_\ell\cap S^{d-1}}\frac{u^T \sqrt{N}(\widehat{\Sigma}-\Sigma) u}{\sigma^2(u)}\le (c_1 + \gamma)\sqrt{\log(1/\delta)}.
 \end{split}
\end{equation}

Let $\widehat{\Sigma}_{U_\ell}$ be the principal $\ell\times \ell$ submatrix of $\widehat{\Sigma}$, set  $\Sigma_{U_\ell}:=\diag(\lambda_1,\ldots,\lambda_\ell)$ and let $I_\ell$ be the identity matrix on $U_\ell$.
Clearly, for every $u \in U_\ell \cap S^{d-1}$, $\sigma(u)=\|\inr{X,u}\|_{L_2} = \|\Sigma_X^{1/2} u\|_2$, $\Sigma_X^{1/2}u \in U_\ell$ and

\begin{equation} \label{ineq:affine_map}
\begin{split}
&\sup_{u\in U_\ell\cap S^{d-1}} \frac{1}{\sigma^2(u)}u^T(\widehat{\Sigma}-\Sigma_X)u  = \sup_{u \in U_\ell\cap S^{d-1}} \left(\left\langle\widehat{\Sigma}\frac{u}{\sigma(u)},\frac{u}{\sigma(u)}\right\rangle-1\right)
\\
&=\sup_{u \in U_\ell\cap S^{d-1}} \left(\left\langle\widehat{\Sigma}_{U_\ell}\frac{u}{\|\Sigma_{U_\ell}^{1/2}u\|_2},\frac{u}{\|\Sigma_{U_\ell}^{1/2}u\|_2}\right\rangle-1\right) = (*).
\end{split}
\end{equation}

Moreover,
\begin{equation*}
\left\{\frac{u}{\|\Sigma_X^{1/2} u\|_2 } : u \in U_\ell \cap S^{d-1}\right\} = U_\ell \cap \partial D = \{ \Sigma_{U_\ell}^{-1/2} v : v \in   U_\ell \cap S^{d-1}\},
\end{equation*}
and therefore,
$$
(*)=\sup_{v \in U_\ell\cap S^{d-1}} \left(\langle\widehat{\Sigma}_{U_\ell}\Sigma_{U_\ell}^{-1/2}v ,\Sigma_{U_\ell}^{-1/2}v\rangle-1\right) \le \|\Sigma_{U_\ell}^{-1/2}\widehat{\Sigma}_{U_\ell}\Sigma_{U_\ell}^{-1/2} - I_\ell \|_{2 \to 2}.
$$
By Corollary \ref{cor:minimax-modified} there is $X \in \mathcal{P}_\Sigma$ for which, with probability at least $\beta$,
$$
\|\Sigma_{U_\ell}^{-1/2}\widehat{\Sigma}_{U_\ell}\Sigma_{U_\ell}^{-1/2} - I_\ell \|_{2 \to 2} \geq \kappa_1 \sqrt{\frac{\ell}{N}},
$$
and thus
$$
\sqrt{N} \sup_{u\in U_\ell\cap S^{d-1}} \frac{1}{\sigma^2(u)}u^T(\widehat{\Sigma}-\Sigma_X)u \geq \kappa_1 \sqrt{\ell}.
$$
Recalling that by \eqref{ineq:first_inequality_for_S_proof}, with probability at least $1-\delta$,
$$
\sqrt{N} \sup_{u\in U_\ell\cap S^{d-1}} \frac{1}{\sigma^2(u)}u^T(\widehat{\Sigma}-\Sigma_X)u \leq (c_1+\gamma) \sqrt{\log(1/\delta)},
$$
and that $\delta \leq \beta$, we have that
$$
\ell \leq \left(\frac{c_1+\gamma}{\kappa_1}\right)\log(1/\delta).
$$

Next, by the choice of $\ell$,
\begin{equation}
\label{ineq:second_inequality_for_S_proof}
    \sup_{u\in U_\ell^{\perp}\cap S^{d-1}} u^T(\widehat{\Sigma}-\Sigma_X) u \le 2S.
\end{equation}
One may follow the same argument used previously, this time for $U_{\ell}^{\perp}$ instead of $U_\ell$; $\widehat{\Sigma}_{U_{\ell}^{\perp}} $ (the $d-\ell\times d-\ell$ submatrix formed by the last $d-\ell$ rows and columns of $\widehat{\Sigma}$) instead of $\widehat{\Sigma}_\ell$; and $\Sigma_{U_\ell^{\perp}}:=\diag(\lambda_{\ell+1},\ldots,\lambda_d)$ instead of $\Sigma_{U_\ell}$. Since $\Sigma_X^{1/2}$ maps $U_\ell^{\perp}$ onto itself,
\begin{equation*}
\sup_{u\in U_\ell^{\perp}\cap S^{d-1}} u^T(\widehat{\Sigma}-\Sigma_X) u = \sup_{v\in S^{n-\ell-1}} v^T(\widehat{\Sigma}_{d-\ell}- \Sigma_{U_\ell^{\perp}})v.
\end{equation*}
Moreover, $\delta\le e^{-1}$, and therefore $\ell+1\le \kappa_0\log(1/\delta)$. Finally, $\|\Sigma_{U_\ell^{\perp}}\|_{2\rightarrow 2}=\lambda_{\ell+1}$ and the effective rank satisfies $r(\Sigma_{U_\ell^{\perp}})=\sum_{i\ge \ell+1}\lambda_i/\lambda_{\ell+1}$. Setting $\kappa_0 := (c_1+\gamma)/\kappa_1+1$, it follows from \eqref{ineq:second_inequality_for_S_proof} and Theorem \ref{lem:minimax_lounici_covariance} that
\begin{equation*}
S\ge \frac{\kappa_1}{2}\sqrt{\lambda_{\ell+1}}\sqrt{\frac{\sum_{i\ge \ell+1}\lambda_i}{N}} \ge \frac{\kappa_1}{2}\sqrt{\lambda_{\kappa_0\log(1/\delta)}}\sqrt{\frac{\sum_{i\ge\kappa_0\log(1/\delta)}\lambda_i}{N}}.
\end{equation*}
\end{proof}

The lower bound from Theorem \ref{prop_lowerbound_strongterm} is not sharp. As it happens, another term is necessary. To see that, consider $\sup_{u\in S^{d-1}}\langle (\widehat{\Sigma}-\Sigma)u,u\rangle = \|\widehat{\Sigma}-\Sigma\|_{2\rightarrow 2}$. If the lower bound on $S$ were sharp, there would have been a covariance estimator $\widehat{\Sigma}$, for which, with probability at least $1-\delta$,
\begin{equation}
\label{ineq:impossible_error}
\|\widehat{\Sigma}-\Sigma\|_{2\rightarrow 2}
\lesssim \lambda_1\sqrt{\frac{\log(1/\delta)}{N}} + \sqrt{\lambda_{\kappa_0\log(1/\delta)}} \sqrt{\frac{\sum_{i\ge \kappa_0\log(1/\delta)}\lambda_i}{N}}.
\end{equation}
However, it follows from \eqref{ineq:optimal_1d} and \eqref{ineq:minimax_lounici_covariance} that the best one can hope for (in the uniform sense over $\mathcal{P}_{\Sigma}$) is
\begin{equation}
\label{ineq:lower_bound_impossible_error_preliminary}
\begin{split}
\|\widehat{\Sigma}-\Sigma\|_{2 \rightarrow 2} &\sim \sup_{u\in S^{d-1}}\sigma^2(u)\sqrt{\frac{\log(1/\delta)}{N}} + \sqrt{\lambda_1}\sqrt{\frac{\sum_{i=1}^d\lambda_i}{N}}\\
&=\lambda_1\sqrt{\frac{\log(1/\delta)}{N}} + \sqrt{\lambda_1}\sqrt{\frac{\sum_{i=1}^d\lambda_i}{N}}\\
&\ge \lambda_1\sqrt{\frac{\log(1/\delta)}{N}} + \sqrt{\lambda_{1}}\sqrt{\frac{\sum_{i\ge\kappa_0\log(1/\delta)}\lambda_i}{N}}.
\end{split}
\end{equation}
Thus, for any $\delta<\beta$, any estimator $\widehat{\Sigma}$ satisfies for some random vector in $\mathcal{P}_{\Sigma}$ that, with probability at least $\delta$,
\begin{equation}
\label{ineq:lower_bound_impossible_error}
\|\widehat{\Sigma}-\Sigma\|_{2 \rightarrow 2} \gtrsim \lambda_1\sqrt{\frac{\log(1/\delta)}{N}} + \sqrt{\lambda_1}\sqrt{\frac{\sum_{i\ge\kappa_0\log(1/\delta)}\lambda_i}{N}}.
\end{equation}
On the other hand, observe that if
\begin{equation}
\label{ineq:impossible_error_part_2}
    \lambda_1 \ll \frac{\sum_{i\ge\kappa_0\log(1/\delta)}\lambda_i}{\kappa_0\log(1/\delta)} \quad \text{and} \quad \lambda_{\kappa_0\log(1/\delta)} \ll \lambda_1,
\end{equation}
then \eqref{ineq:impossible_error_part_2} and \eqref{ineq:impossible_error} imply that there is an estimator $\widehat{\Sigma}$ which satisfies that with probability at least $1-\delta$
\begin{equation*}
\|\widehat{\Sigma}-\Sigma\|_{2\rightarrow 2} \ll \lambda_1\sqrt{\frac{\log(1/\delta)}{N}}+\sqrt{\lambda_{1}}\sqrt{\frac{\sum_{i\ge k_0\log(1/\delta)}\lambda_i}{N}};
\end{equation*}
that clearly violates \eqref{ineq:lower_bound_impossible_error}. Thus, an additional term is needed, and to identify that term, we require some more notation.
\vskip 0.4cm

Let $G \sim \mathcal{N}(\mu,\Sigma)$ be a gaussian vector with mean $\mu$ and covariance matrix $\Sigma$. Also, recall that
\begin{equation*}
    \ell^{\ast}(T):=\mathbb{E}\sup_{t\in T}\langle g,t\rangle,
\end{equation*}
where, as always, $g$ is the standard gaussian vector in $\mathbb{R}^d$. Set $T$ to be a symmetric set that spans $\mathbb{R}^d$, let $\|\cdot\|_{T}:=\sup_{t\in T}\langle t,.\rangle$, and therefore $\|\cdot\|_{T}$ is a norm.

The analysis presented here relies on a sharp lower bound from \cite{lugosi2019near,depersin2022optimal}; the version we use is from \cite[Theorem 3]{depersin2022optimal}. 
\begin{theorem}
\label{lemma:lower_bound_gaussianwidth_general_norm}
There exists an absolute constant $\kappa_2$ for which the following holds. Consider an estimator $\widehat{\mu}(x_1,\ldots,x_N,\delta)$ for the means of the gaussian random vectors $G$ with covariance matrix $\Sigma$. If $\widehat{\mu}$ satisfies that for every $\delta<1/4$ and $\mu\in \mathbb{R}^d$,
\begin{equation*}
   \mathbb{P}(\|\widehat{\mu}(G_1,\ldots,G_N,\delta)-\mu\|_T\ge r^{\ast}) \le \delta,
\end{equation*}
then
\begin{equation}
\label{ineq:lower_bound_gaussianwidth_general_norm}
    r^{\ast}\ge \kappa_2\left(\sup_{t\in T}\sqrt{\mathbb{E}\langle G-\mu,t\rangle^2}\sqrt{\frac{\log(1/\delta)}{N}} + \frac{1}{\sqrt{N}}\ell^{\ast}(\Sigma^{1/2}T)\right).
\end{equation}
\end{theorem}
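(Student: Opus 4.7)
The right-hand side of \eqref{ineq:lower_bound_gaussianwidth_general_norm} contains two essentially independent contributions: the one-dimensional sub-Gaussian term and the global gaussian-width term. I would prove each lower bound separately and then take the maximum.

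For the sub-Gaussian term I would use Le Cam's two-point method. Pick $t^{*}\in T$ with $\|\Sigma^{1/2}t^{*}\|_{2}\ge \tfrac12\sup_{t\in T}\|\Sigma^{1/2}t\|_{2}$ and compare the two Gaussian location models $N(\mu_{1},\Sigma)$ and $N(\mu_{2},\Sigma)$ where $\mu_{1}=0$ and $\mu_{2}=\alpha\,\Sigma t^{*}/\|\Sigma^{1/2}t^{*}\|_{2}$. A direct computation gives
\[
\mathrm{KL}\bigl(N(\mu_{2},\Sigma)^{\otimes N}\,\|\,N(\mu_{1},\Sigma)^{\otimes N}\bigr)=\tfrac{N}{2}\alpha^{2},
\qquad
\|\mu_{2}-\mu_{1}\|_{T}\ge \langle \mu_{2},t^{*}\rangle=\alpha\|\Sigma^{1/2}t^{*}\|_{2}.
\]
Choosing $\alpha\sim\sqrt{\log(1/\delta)/N}$ makes the KL divergence of order $\log(1/\delta)$, so the two hypotheses cannot be distinguished with failure probability better than a constant multiple of $\delta$. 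A triangle-inequality argument then forces the $(1-\delta)$-quantile $r^{*}$ of $\|\widehat{\mu}-\mu\|_{T}$ to be $\gtrsim\|\Sigma^{1/2}t^{*}\|_{2}\sqrt{\log(1/\delta)/N}$, which matches the first term.

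For the gaussian-width term I would argue by a Bayes reduction. By sufficiency the $N$ independent observations are equivalent to the single datum $\bar G\sim N(\mu,\Sigma/N)$, so it suffices to prove the bound with $N=1$ and $\Sigma$ replaced by $\Sigma/N$. Place a Gaussian prior $\mu\sim N(0,\tau^{2}\Sigma)$; then $(\mu,\bar G)$ is jointly Gaussian, and for any symmetric norm loss $\|\cdot\|_{T}$ Anderson's inequality identifies the Bayes-optimal estimator as the posterior mean. Its error is distributed as $\Sigma^{1/2}g/\bigl(\sqrt{N}\sqrt{1+1/(\tau^{2}N)}\bigr)$, whose $\|\cdot\|_{T}$-norm equals
\[
\frac{1}{\sqrt{N}\sqrt{1+1/(\tau^{2}N)}}\,\sup_{t\in T}\langle g,\Sigma^{1/2}t\rangle .
\]
By Borell--Tsirelson concentration this supremum lies within a constant factor of its mean $\ell^{\ast}(\Sigma^{1/2}T)$ with probability at least $3/4$, so its $(1-\delta)$-quantile is $\gtrsim \ell^{\ast}(\Sigma^{1/2}T)/\sqrt{N}$ for every $\delta<1/4$. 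Letting $\tau\to\infty$ and using that the Bayes-optimal estimator minimises the average risk (so the worst-case risk is at least as large), a standard averaging argument extracts a deterministic $\mu$ for which $\widehat{\mu}$ itself satisfies the same quantile lower bound, producing the second term.

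The delicate step is the second one: a naive packing combined with Fano only recovers the Sudakov lower bound $\sqrt{\log\mathcal{N}(\Sigma^{1/2}T,\|\cdot\|_{2})}/\sqrt{N}$, which can be strictly smaller than $\ell^{\ast}(\Sigma^{1/2}T)$. The Gaussian-prior/Anderson reduction is what promotes covering numbers to the gaussian width, because the posterior error itself has the form of a Gaussian supremum whose expectation is exactly $\ell^{\ast}(\Sigma^{1/2}T)$. Upgrading this Bayesian expectation statement to a high-probability worst-case statement that matches the $(1-\delta)$-quantile appearing in \eqref{ineq:lower_bound_gaussianwidth_general_norm} is, to my mind, the technical heart of the argument and where most of the care must be spent.
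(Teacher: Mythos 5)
The paper does not actually prove this theorem; it is imported verbatim from \cite[Theorem~3]{depersin2022optimal} (see the sentence preceding it and the Appendix, which reproduces only the restricted Corollary~\ref{cor:lower_bound_gaussianwidth_general_norm} by the same prior argument). Your reconstruction follows the same two-track route that the cited reference uses: a two-point Le Cam bound for the $\sqrt{\log(1/\delta)/N}$ term, and a Gaussian-prior Bayes reduction for the $\ell^{\ast}(\Sigma^{1/2}T)/\sqrt{N}$ term, so in that sense you and the source are aligned. The KL computation for the first term is correct, and you rightly identify Anderson's inequality as the engine that makes the second term work: it shows that the posterior mean is optimal not merely in expected $\|\cdot\|_T$-loss but at every confidence level, i.e.\ for all $r$,
\begin{equation*}
\mathbb{P}_{\mu,\bar G}\bigl(\|\widehat{\mu}-\mu\|_{T}\ge r\bigr)\ \ge\ \mathbb{P}_{\mu,\bar G}\bigl(\|\widehat{\mu}_{\mathrm{Bayes}}-\mu\|_{T}\ge r\bigr),
\end{equation*}
which is exactly what lets one translate the quantile hypothesis into a statement about an explicit Gaussian supremum.

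The one place where the write-up is imprecise is the closing logic. You say one ``extracts a deterministic $\mu$ for which $\widehat{\mu}$ itself satisfies the same quantile lower bound.'' This inverts the direction of the reduction. The cleaner (and correct) chain is the other way around: the hypothesis holds \emph{for every} $\mu$, so one may average it over the Gaussian prior to get $\mathbb{E}_{\mu}\,\mathbb{P}(\|\widehat{\mu}-\mu\|_{T}\ge r^{\ast})\le\delta$; by Anderson the Bayes estimator's failure probability under the prior is no larger; and since the Bayes error is an explicit centred Gaussian whose $\|\cdot\|_T$-norm has mean $\approx\ell^{\ast}(\Sigma^{1/2}T)/\sqrt{N}$ (with Borell--TIS controlling the lower tail), the assumption that $r^{\ast}\ll\ell^{\ast}(\Sigma^{1/2}T)/\sqrt{N}$ would force that failure probability to be close to one, a contradiction. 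No extraction of a ``bad'' deterministic $\mu$ is needed or appropriate. With that correction, the argument is sound and matches the approach the paper relies on by citation.
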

We actually need a slightly stronger version of Theorem \ref{lemma:lower_bound_gaussianwidth_general_norm}, by restricting the set of `eligible means' $\mu$ to a Euclidean ball $RB_2^d$ of radius $R:=3\sqrt{\tr(\Sigma)/N}$ (rather than to the entire $\mathbb{R}^d$, as is the case in Theorem \ref{lemma:lower_bound_gaussianwidth_general_norm}). The argument follows the path used in \cite{depersin2022optimal}, by considering a prior distribution on $\mu$, namely, that $\mu \sim \mathcal{N}(0,N^{-1}\Sigma)$.

\begin{corollary}
\label{cor:lower_bound_gaussianwidth_general_norm}
There exists an absolute constant $\kappa_2'$ for which the following holds. Consider an estimator $\widehat{\mu}(x_1,\ldots,x_N,\delta)$ for the means of gaussian random vectors $G$ with covariance matrix $\Sigma$. If the estimator $\widehat{\mu}$ satisfies for every $\delta <1/4$ and $\mu \in RB_2^d$ that
\begin{equation}
\label{cor:lower_bound_gaussianwidth_general_norm_assumption}
   \mathbb{P}(\|\widehat{\mu}(G_1,\ldots,G_N,\delta)-\mu\|_2 \ge r^{\ast})\le \delta,
\end{equation}
then
\begin{equation}
\label{ineq:lower_bound_gaussianwidth_general_norm_corollary}
   r^{\ast}\ge \kappa_2'\sqrt{\frac{\tr(\Sigma)}{N}}.
\end{equation}
\end{corollary}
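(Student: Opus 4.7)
The plan is to carry out the Bayesian argument suggested in the sentence preceding the corollary. Endow $\mu$ with the prior $\pi := \mathcal{N}(0, N^{-1}\Sigma)$, so that $(\mu, G_1, \ldots, G_N)$ is jointly Gaussian. A standard conjugate computation gives the posterior $\mu \mid (G_i) \sim \mathcal{N}(\bar G/2, \Sigma/(2N))$, so the Bayes-optimal estimator under squared loss is $\widehat{\mu}_B = \bar G/2$, and every estimator $\widetilde{\mu}$ must satisfy the universal Bayes lower bound
\[
\mathbb{E}_\pi \mathbb{E}_{G \mid \mu} \| \widetilde{\mu} - \mu \|_2^2 \;\ge\; \tr(\Sigma)/(2N).
\]
Observe that $\mathbb{E}_\pi \|\mu\|_2^2 = \tr(\Sigma)/N = R^2/9$, so Markov's inequality gives $\pi(\{\|\mu\|_2 > R\}) \le 1/9$ and the overwhelming majority of the prior sits inside the region $RB_2^d$ on which the hypothesis on $\widehat\mu$ applies.

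To exploit the hypothesis, I would replace $\widehat{\mu}$ by its Euclidean metric projection onto the ball, $\widehat{\mu}' := P_{RB_2^d}(\widehat{\mu})$. Because $RB_2^d$ is convex, this projection is $1$-Lipschitz, which yields two useful consequences. For $\mu \in RB_2^d$ the assumption transfers, i.e. $\mathbb{P}(\|\widehat{\mu}' - \mu\|_2 \ge r^{\ast}) \le \delta$, while deterministically $\|\widehat{\mu}' - \mu\|_2 \le 2R$. For $\mu \notin RB_2^d$ one has the crude bound $\|\widehat{\mu}' - \mu\|_2 \le \|\widehat{\mu}'\|_2 + \|\mu\|_2 \le 2\|\mu\|_2$. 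Integrating against $\pi$ gives
\[
\mathbb{E}_\pi \mathbb{E}_G \|\widehat{\mu}' - \mu\|_2^2 \;\le\; (r^{\ast})^2 + 4 R^2 \delta + 4 \, \mathbb{E}_\pi \bigl[ \|\mu\|_2^2 \mathbf{1}_{\|\mu\|_2 > R} \bigr].
\]
The tail term can be controlled via Cauchy--Schwarz and the fourth-moment identity $\mathbb{E}_\pi \|\mu\|_2^4 \le 3 (\tr(\Sigma)/N)^2$ for centred Gaussians, or more sharply by observing that among positive mixing weights of unit-mean $\chi^2_1$ variables the tail at $9$ is maximised in the rank-one case, hence is at most $\mathbb{P}(|g|>3)$.

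Combining the two estimates and rearranging gives
\[
(r^{\ast})^2 \;\ge\; \Bigl[\tfrac{1}{2} - 36 \delta - C' \Bigr]\,\tr(\Sigma)/N,
\]
where $C'$ is the absolute constant coming from the tail bound. The remaining task is to make the bracket a strictly positive absolute constant, which then provides $\kappa_2'$. Since the hypothesis is assumed to hold for every $\delta < 1/4$, we are free to pick $\delta$ as small an absolute constant as needed, so the $36\delta$ term is harmless. The main technical obstacle is therefore the tail estimate $C' < 1/2$: a naive Cauchy--Schwarz application is not quite tight enough, so either the rank-one worst-case analysis sketched above must be invoked, or else the prior should be softened to $\mathcal{N}(0, cN^{-1}\Sigma)$ for a small absolute constant $c$, which reduces the Bayes risk to $\tfrac{c}{1+c}\tr(\Sigma)/N$ while depressing the tail by a factor of order $c^{3/2}$. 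Either route, after a careful choice of $c$ and $\delta$, produces the absolute constant $\kappa_2'>0$ for which $r^{\ast} \ge \kappa_2'\sqrt{\tr(\Sigma)/N}$.
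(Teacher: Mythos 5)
Your proposal shares the paper's starting point — the Bayesian prior $\mu\sim\mathcal{N}(0,N^{-1}\Sigma)$ — but then diverges onto a genuinely different route. The paper does not compute a Bayes risk directly: it feeds the truncated prior into the integral criterion from the proof of Theorem~\ref{lemma:lower_bound_gaussianwidth_general_norm} (following Depersin--Lecu\'e) and closes the argument with a Gaussian concentration estimate showing $\gamma_\mu(\|\mu\|_2>R)\le e^{-2}$, so that the integral stays below $1/4$ for $\delta=e^{-2}/2$. You instead reprove the lower bound from scratch via the exact posterior $\mathcal{N}(\bar G/2,\Sigma/(2N))$ and the Bayes risk $\tr(\Sigma)/(2N)$, then convert the in-probability guarantee into an $L_2$ bound by projecting $\widehat\mu$ onto $RB_2^d$. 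This is a self-contained argument that avoids the black-box reference, which is a real gain in transparency; it also automatically gives the $\sqrt{\tr(\Sigma)}$ form of the bound, whereas the paper's route yields $\ell^*(\Sigma^{1/2}B_2^d)/\sqrt N$ and silently relies on the (standard but unstated) comparison $\ell^*(\Sigma^{1/2}B_2^d)\gtrsim\sqrt{\tr\Sigma}$.

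The one genuine issue is the constant chase, which you correctly flag. With the un-softened prior, the tail contribution $4\,\mathbb{E}_\pi[\|\mu\|_2^2\mathbf 1_{\|\mu\|_2>R}]$ estimated by Cauchy--Schwarz comes out to roughly $1.2\,\tr\Sigma/N$, which overwhelms the Bayes risk $0.5\,\tr\Sigma/N$ — so the argument as first written does not close. Of your two proposed fixes, the "rank-one is the worst case" claim would need a Schur-convexity (or comparable) argument for $w\mapsto\mathbb{E}[(\sum w_i g_i^2)\mathbf 1_{\sum w_i g_i^2>9}]$ that you do not supply and which is not obviously standard; I would not rest the proof on it as stated. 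The second fix — softening the prior to $\mathcal{N}(0,cN^{-1}\Sigma)$ — does work: the Bayes risk becomes $\tfrac{c}{1+c}\tr\Sigma/N$ while the tail term, again by Cauchy--Schwarz, is bounded by $\tfrac{4\sqrt6\,c^2}{9-c}\tr\Sigma/N$ (order $c^2$, not the $c^{3/2}$ you wrote, but this only helps), so for, say, $c=1/2$ and $\delta$ a small absolute constant the bracket is strictly positive and yields $\kappa_2'$. With that choice pinned down, the proof is complete and correct.
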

\noindent We leave the straightforward proof to the Appendix.

Now with all the necessary ingredients in place, we can present a lower bound that complements Theorem \ref{prop_lowerbound_strongterm}. Recall that $\kappa_0$ is the constant from Theorem \ref{prop_lowerbound_strongterm}, fix a subspace $E^{\perp}$ of co-dimension $\kappa_0\log(1/\delta)$, and define $P_{E^{\perp}}:\mathbb{R}^d\rightarrow E^{\perp}$ to be the orthogonal projection onto $E^{\perp}$. Also, for $u\in E\cap S^{d-1}$ let $\mathcal{P}_{\Sigma,u}'$ be the set of all random vectors $X$ taking values in $\mathbb{R}^d$ and whose covariance matrix $\Sigma_X$ satisfies
\begin{equation}
\label{eq:common_property_Sigma'}
u^T\Sigma_Xu = u^T\Sigma u \quad \text{and} \quad \tr(P_{E^{\perp}}\Sigma_X P_{E^{\perp}}^T) \le \tr(\Sigma_{E^{\perp}}).
\end{equation}

\begin{theorem}
\label{prop_lowerbound_mixedterm}
Let $\kappa_2'$ be as in Corollary \ref{cor:lower_bound_gaussianwidth_general_norm} and $\gamma\kappa_2'/3$. Set $N\ge 18$. Let $\widehat{\Sigma}(x_1,\ldots,x_N,\delta)$ be an estimator which satisfies that for every $X\in \mathcal{P}_{\Sigma,u}'$ and $\delta\in (0,1/8)$,
\begin{equation}
\label{ineq:prop_lowerbound_mixedterm_assumption}
\mathbb{P}\left(\sup_{v\in E^{\perp}\cap S^{d-1}}|\langle \widehat{\Sigma}(X_1,\ldots,X_N,\delta)v,v \rangle - \langle \Sigma_X v,v\rangle | \ge \gamma \sigma(u)\sqrt{\frac{\tr(\Sigma_{E^{\perp}})}{N}}\right) \le \delta.
\end{equation}
Then there is $X\in \mathcal{P}_{\Sigma,u}'$ for which, with probability at least $\delta$,
\begin{equation}
\label{ineq:prop_lowerbound_mixedterm_conclusion}
\begin{split}
&\sup_{v\in (E^{\perp}\cap S^{d-1})\cup \{0\}}|\langle \widehat{\Sigma}(X_1,\ldots,X_N,\delta)(u+v),u+v \rangle - \langle \Sigma_X (u+v),u+v\rangle | \\
&\ge \gamma \sigma(u)\sqrt{\frac{\tr(\Sigma_{E^{\perp}})}{N}}.
\end{split}
\end{equation}
\end{theorem}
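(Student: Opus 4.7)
The plan is to argue by contradiction and reduce the statement to the one-sample Gaussian mean lower bound of Corollary \ref{cor:lower_bound_gaussianwidth_general_norm}. Suppose that \eqref{ineq:prop_lowerbound_mixedterm_conclusion} fails for every $X\in\mathcal{P}'_{\Sigma,u}$: then with probability $>1-\delta$ all of $|u^T(\widehat\Sigma-\Sigma_X)u|$ (taking $v=0$ in the supremum), $|(u+v)^T(\widehat\Sigma-\Sigma_X)(u+v)|$, and, by \eqref{ineq:prop_lowerbound_mixedterm_assumption}, $|v^T(\widehat\Sigma-\Sigma_X)v|$, are below $\gamma\sigma(u)\sqrt{\tr(\Sigma_{E^\perp})/N}$ uniformly in $v\in E^\perp\cap S^{d-1}$. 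The polarisation identity
$$2\,u^T(\widehat\Sigma-\Sigma_X)v=(u+v)^T(\widehat\Sigma-\Sigma_X)(u+v)-u^T(\widehat\Sigma-\Sigma_X)u-v^T(\widehat\Sigma-\Sigma_X)v$$
combined with a union bound forces
$$\|P_{E^\perp}(\widehat\Sigma u-\Sigma_X u)\|_2\le \tfrac{3}{2}\gamma\sigma(u)\sqrt{\tr(\Sigma_{E^\perp})/N}$$
on an event of probability exceeding $1-2\delta$.

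Next I would construct a subfamily of $\mathcal{P}'_{\Sigma,u}$ parametrising a Gaussian mean-estimation problem. For $\mu\in E^\perp$ with $\sigma^2(u)\|\mu\|_2^2\le \tr(\Sigma_{E^\perp})/2$, let $\Sigma_Z:=\bigl(1-\sigma^2(u)\|\mu\|_2^2/\tr(\Sigma_{E^\perp})\bigr)\Sigma_{E^\perp}$ and take $X^\mu:=\xi u+\xi\mu+Z$ with independent $\xi\sim\mathcal{N}(0,\sigma^2(u))$ and $Z\sim\mathcal{N}(0,\Sigma_Z)$. A direct computation gives $u^T\Sigma_{X^\mu}u=\sigma^2(u)$, $\tr(P_{E^\perp}\Sigma_{X^\mu}P_{E^\perp}^T)=\tr(\Sigma_{E^\perp})$, and $P_{E^\perp}\Sigma_{X^\mu}u=\sigma^2(u)\mu$, so $X^\mu\in\mathcal{P}'_{\Sigma,u}$. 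Setting $\widehat\mu:=P_{E^\perp}\widehat\Sigma u/\sigma^2(u)$, the bound of the previous paragraph becomes $\|\widehat\mu-\mu\|_2\le \tfrac{3}{2}\gamma\sqrt{\tr(\Sigma_{E^\perp})/N}/\sigma(u)$ with probability $>1-2\delta$, uniformly over every $\mu$ in that ball.

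The contradiction comes from reducing $\widehat\mu$ to a one-sample mean estimator and invoking Corollary \ref{cor:lower_bound_gaussianwidth_general_norm}. Writing $X_i=(\xi_i,Y_i)$ with $\xi_i:=\langle u,X_i\rangle$, the pair $(T,S):=(\sum_i\xi_i^2,\sum_i\xi_iY_i)$ is sufficient for $\mu$ in the model $X^\mu$, with $S/T\mid T\sim\mathcal{N}(\mu,\Sigma_Z/T)$ and the $\xi_i$'s ancillary. Rao--Blackwellising $\widehat\mu$ to a function of $(T,S)$ and then, given an independent observation $W\sim\mathcal{N}(\mu,\Sigma_Z/(4N\sigma^2(u)))$, drawing fresh $(\xi_i)$'s, restricting to the event $\{T\le 4N\sigma^2(u)\}$ (which by Gaussian concentration on $T\sim\sigma^2(u)\chi^2_N$ has probability $\ge 1-e^{-\Omega(N)}\ge 1-\delta$ whenever $N\ge 18$), and inflating $W$ by an independent Gaussian of covariance $\Sigma_Z/T-\Sigma_Z/(4N\sigma^2(u))$ produces a synthetic sufficient statistic whose conditional law matches that of $S/T$ in $X^\mu$. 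Feeding it back into $\widehat\mu$ yields a mean estimator in the one-sample model with error $\le \tfrac{3}{2}\gamma\sqrt{\tr(\Sigma_{E^\perp})/N}/\sigma(u)$ and confidence $>1-3\delta>3/4$; since $3\delta<1/4$ by $\delta<1/8$, Corollary \ref{cor:lower_bound_gaussianwidth_general_norm} forces this error to exceed $\kappa_2'\sqrt{\tr(\Sigma_Z)/(4N\sigma^2(u))}\ge (\kappa_2'/3)\sqrt{\tr(\Sigma_{E^\perp})/N}/\sigma(u)$, using $\tr(\Sigma_Z)\ge \tr(\Sigma_{E^\perp})/2$, contradicting $\gamma\le \kappa_2'/3$. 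The main obstacle is making this simulation/Rao--Blackwell reduction precise: one must verify that the Rao--Blackwellised estimator still obeys the high-probability accuracy bound, that the inflated $W$ has exactly the correct conditional law so the synthesised data is indistinguishable from a genuine sample of $X^\mu$, and that the constants absorbed along the way are compatible with $\gamma\le\kappa_2'/3$, $N\ge 18$ and $\delta<1/8$.
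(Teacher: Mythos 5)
Your high-level strategy---polarising the quadratic form to isolate the cross term and then reducing to the Gaussian mean-estimation lower bound of Corollary \ref{cor:lower_bound_gaussianwidth_general_norm}---is the right one, and the polarisation step matches the paper exactly. The genuine gap is in the choice of the hard family $X^\mu$. You take $\xi\sim\mathcal{N}(0,\sigma^2(u))$ for the $u$-component, which makes $\langle X^\mu,u\rangle P_{E^\perp}X^\mu = \xi^2\mu + \xi Z$ a \emph{random rescaling} of $\mu$. This is what forces the sufficient-statistic / Rao--Blackwell / simulation machinery, and that machinery does not close cleanly: (i) $\mathbb{E}[\widehat\mu\mid T,S]$ does not inherit the high-probability bound from $\widehat\mu$ (conditional expectation is not monotone in that sense), so you must replace Rao--Blackwell with a resampling argument, which you only gesture at; (ii) the restriction to $\{T\le 4N\sigma^2(u)\}$ costs an extra $e^{-\Omega(N)}$ of failure probability, and for $N=18$ this is a fixed constant that need not be $\le\delta$---nor need $2\delta+e^{-\Omega(N)}$ stay below the $1/4$ that Corollary \ref{cor:lower_bound_gaussianwidth_general_norm} requires when $\delta$ is close to $1/8$; (iii) the $\Sigma_Z/4$ in your one-sample Gaussian plus the $\tr\Sigma_Z\ge\tfrac12\tr\Sigma_{E^\perp}$ loss gives a lower bound $\kappa_2'/(2\sqrt2)$ rather than $\kappa_2'/\sqrt2$, so your contradiction only fires when $\gamma<\kappa_2'/(3\sqrt2)$, strictly smaller than the $\kappa_2'/3$ threshold in the statement; and (iv) your $\Sigma_Z$ depends on $\mu$, whereas Corollary \ref{cor:lower_bound_gaussianwidth_general_norm} is for a fixed Gaussian covariance.

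The paper sidesteps all of this with one small change: it puts a \emph{Rademacher} sign $\varepsilon$ (not a Gaussian $\xi$) on the $u$-component, setting $P_E X_\mu = \varepsilon\sigma(u)u + \cdots$ and $P_{E^\perp}X_\mu = \widetilde G + \sigma(u)^{-1}\varepsilon\mu$ with $\widetilde G$ Gaussian of a \emph{fixed} covariance $\widetilde\Sigma$ chosen so that $\tr\widetilde\Sigma = \tr\Sigma_{E^\perp}-R'^2/\sigma^2(u)\ge\tfrac12\tr\Sigma_{E^\perp}$ for $N\ge 18$. Because $\varepsilon^2=1$, the observable $\langle X_\mu,u\rangle P_{E^\perp}X_\mu=\sigma(u)\varepsilon\widetilde G+\mu$ has \emph{exactly} the law $\mathcal{N}(\mu,\sigma^2(u)\widetilde\Sigma)$ (by symmetry of $\widetilde G$), independently of the remaining nuisance variables. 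So $P_{E^\perp}\widehat\Sigma u$ is literally a mean estimator based on $N$ i.i.d.\ Gaussian samples, and Corollary \ref{cor:lower_bound_gaussianwidth_general_norm} applies directly with confidence budget $2\delta<1/4$ and no additional event to condition on. This is the step you are missing; with it, the sufficiency, the $\chi^2_N$ truncation, and the extra factors of $2$ and $\sqrt2$ all disappear, and the constants $\gamma<\kappa_2'/3$, $N\ge 18$, $\delta<1/8$ close without slack.
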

\begin{proof}
The first step of the proof is to construct a suitable collection of centred vectors $X_{\mu} \in \mathcal{P}_{\Sigma,u}'$. To that end,  set 
$$
R':=3\sigma(u)\sqrt{\frac{\tr(\Sigma
_{E^{\perp}})}{N}},
$$ 
and fix a gaussian random vector $\Tilde{G}$, taking values in $\mathbb{R}^{|E^{\perp}|}$ and whose covariance matrix $\widetilde{\Sigma} \in \mathbb{R}^{|E^{\perp}|\times |E^{\perp}|}$ satisfies that
\begin{equation}
\label{ineq:trace_tilde_Sigma}
    \tr(\widetilde{\Sigma}) = \tr(\Sigma_{E^{\perp}}) - \frac{R'^2}{\sigma^2(u)}.
\end{equation}     
Since      
\begin{equation*}     
\tr(\Sigma_{E^{\perp}}) - \frac{R'^2}{\sigma^2(u)}     
     = \tr(\Sigma_{E^{\perp}}) - 9\frac{\tr(\Sigma_{E^{\perp}})}{N} \ge \frac{1}{2}\tr(\Sigma_{E^{\perp}})>0,
\end{equation*}
there is a valid choice of $\Tilde{G}$. Fix $\mu \in R' B_2^{E^\perp}$ and define $X_{\mu}$ through its projections on $E$ and $E^{\perp}$ as follows.

Firstly, let $\varepsilon$ be a Rademacher random variable (i.e., symmetric, $\{-1,1\}$ valued) that is independent of $\Tilde{G}$. The law of the orthogonal projection of $X_{\mu}$ onto $E^{\perp}$ is
\begin{equation*}
   P_{E^{\perp}}X_{\mu}:= \Tilde{G} + \frac{1}{\sigma(u)}\varepsilon \mu.
\end{equation*}
Secondly, to define the law of the orthogonal projection of $X_{\mu}$ onto $E$, let $u_2,\ldots,u_{|E|}$ be unit vectors for which the set $\{u,u_2,\ldots,u_{|E|}\}$ is an orthonormal basis for $E$. Consider independent Rademacher random variables $\varepsilon_2,\ldots,\varepsilon_{|E|}$ and the variances $\sigma_2,\ldots,\sigma_{|E|}$ of $\varepsilon_2,\ldots,\varepsilon_{|E|}$, respectively. The law of $P_{E}X_{\mu}$ is given by 
\begin{equation*}
    P_{E}X_{\mu} := \varepsilon \sigma(u)u + \sum_{i=2}^{|E|}\varepsilon_i \sigma_i u_i,
\end{equation*}
and by the natural embedding of $P_{E}X_{\mu}$ and $P_{E^{\perp}}X_{\mu}$ in $\mathbb{R}^d$, the law of $X_{\mu}$ is defined by $X_{\mu} := P_{E}X_{\mu} + P_{E^{\perp}}X_{\mu} $. Let us verify that $X_{\mu} \in \mathcal{P}_{\Sigma,u}'$. Clearly,
\begin{equation*}
    \langle X_{\mu},u\rangle = \langle P_{E}X_{\mu},u\rangle = \sigma(u)\varepsilon, 
\end{equation*}
and then $\mathbb{E}\langle X,u\rangle^2 = \sigma^2(u)=u^T\Sigma_X u$. It is straightforward to verify that \eqref{ineq:trace_tilde_Sigma} implies that the covariance matrix $\Sigma_{\mu}:=\mathbb{E}(X_{\mu}\otimes X_{\mu})$ 
satisfies that $\tr(P_{E^{\perp}}\Sigma_{\mu}P_{E^{\perp}}^T) \le \tr(\Sigma_{E^{\perp}})$, therefore $X_{\mu} \in \mathcal{P}_{\Sigma,u}'$.

Next, observe that for every vector $X_\mu$ with covariance matrix $\Sigma_{{\mu}}$, we have that
\begin{equation}
\label{eq:expansion_mixedterm_lower_bound}
\langle (\widehat{\Sigma_{{\mu}}}-\Sigma_{{\mu}})(u+v), u+v\rangle = \langle (\widehat{\Sigma_{{\mu}}}-\Sigma_{{\mu}})u, u\rangle + \langle (\widehat{\Sigma_{{\mu}}}-\Sigma_{{\mu}})v, v\rangle + 2\langle (\widehat{\Sigma_{{\mu}}}-\Sigma_{{\mu}})u, v\rangle,
\end{equation}
and if \eqref{ineq:prop_lowerbound_mixedterm_conclusion} does not hold, then it follows that for every $X_\mu$, with probability at least $1-\delta$,
\begin{equation}
\label{prop_lowerbound_mixedterm_part2}
\sup_{v\in (E^{\perp}\cap S^{d-1})\cup\{0\}}\langle (\widehat{\Sigma_{{\mu}}}-\Sigma_{\mu})(u+v), u+v\rangle \le  \gamma \sigma(u)\sqrt{\frac{\tr(\Sigma_{E^{\perp}})}{N}}.
\end{equation}
Moreover, since $\delta <1/8$, \eqref{ineq:prop_lowerbound_mixedterm_assumption} and \eqref{prop_lowerbound_mixedterm_part2} hold together with probability  at least 
$$
1-(\delta+\delta) > 3/4.
$$ 
On that event, invoking \eqref{eq:expansion_mixedterm_lower_bound},
\begin{equation}
\label{ineq:prop_lowerbound_mixedterm_contradiction}
\begin{split}
&\sup_{v\in E^{\perp}\cap S^{d-1}}|\langle (\widehat{\Sigma_{{\mu}}}-\Sigma_{{\mu}}) u,v\rangle| \le \frac{1}{2}|\langle (\widehat{\Sigma_{{\mu}}}-\Sigma_{{\mu}}) u,u\rangle|
\\
& +\frac{1}{2}\sup_{v\in E^{\perp}\cap S^{d-1}}\left(|\langle (\widehat{\Sigma_{{\mu}}}-\Sigma_{{\mu}}) v,v\rangle| + |\langle (\widehat{\Sigma_{{\mu}}}-\Sigma_{{\mu}}) (u+v),u+v\rangle|\right)
\\
& \le \sup_{v\in (E^{\perp}\cap S^{d-1})\cup \{0\} }|\langle (\widehat{\Sigma_{{\mu}}}-\Sigma_{{\mu}}) (u+v),u+v\rangle| + \frac{1}{2}\sup_{v\in E^{\perp}\cap S^{d-1}} |\langle (\widehat{\Sigma_{{\mu}}}-\Sigma_{{\mu}}) v,v\rangle| \\
&\le \gamma\sigma(u)\sqrt{\frac{\tr(\Sigma_{E^{\perp}})}{N}} + \frac{\gamma}{2}\sigma(u)\sqrt{\frac{\tr(\Sigma_{E^{\perp}})}{N}}\\
& \le \frac{1}{2} \kappa_2'\sigma(u)\sqrt{\frac{\tr(\Sigma_{E^{\perp}})}{N}}:=r^{\ast},
\end{split}
\end{equation}
where the last step follows from the fact that $\gamma <\kappa^\prime_2/3$. 

Thus, to establish \eqref{ineq:prop_lowerbound_mixedterm_conclusion}, it suffices to show that \eqref{ineq:prop_lowerbound_mixedterm_contradiction} cannot hold. Indeed, notice that
\begin{equation}
\label{eq:mixed_gaussian_vector}
\langle X,u\rangle P_{E^{\perp}}X = \sigma(u)\Tilde{G} + \mu \sim \mathcal{N}(\mu,\sigma^2(u)\widetilde{\Sigma}),
\end{equation}
and that \eqref{ineq:prop_lowerbound_mixedterm_contradiction} implies that for every $\mu\in R'B_2^{|E^{\perp}|}$
\begin{equation*}
\begin{split}
&\frac{3}{4}<\mathbb{P}\left(\sup_{v\in E^{\perp}\cap S^{d-1}}|\langle (\widehat{\Sigma}_{\mu}-\Sigma_{\mu}) u,v\rangle|\le r^{\ast}\right)\\
&= \mathbb{P}\left(\|P_{E^{\perp}}\widehat{\Sigma}_{\mu} u- \mathbb{E}\langle X_{\mu},u\rangle (P_{E^{\perp}}X_{\mu})\|_{\ell_2(E^{\perp})}\le r^{\ast}\right).
\end{split}
\end{equation*}
It follows that $P_{E^{\perp}}\widehat{\Sigma}_{\mu} u$ is a mean estimation procedure with respect to Euclidean norm in $(\mathbb{R}^{|E^{\perp}|},\|\cdot\|_{\ell_2(E^{\perp})})$. More accurately, invoking \eqref{eq:mixed_gaussian_vector} the procedure estimates with accuracy at least $r^{\ast}$ and confidence at least $3/4$ the mean of every gaussian random vector of the form $\langle X_{\mu},u\rangle P_{E^{\perp}}X_{\mu}$ that takes value in $\mathbb{R}^{|E^{\perp}|}$. This includes all gaussian random vectors in $\R^{|E^\perp|}$ that have mean $\mu \in R'B_2^{|E^\perp|}$ and covariance $\sigma^2(u)\Tilde{\Sigma}$. Therefore by Corollary \ref{cor:lower_bound_gaussianwidth_general_norm} followed by \eqref{ineq:trace_tilde_Sigma}, 
\begin{equation*}
    r^{\ast} \ge \kappa_2' \sigma(u)\sqrt{\frac{\tr(\Tilde{\Sigma})}{N}} \ge \kappa_2' \sigma(u)\sqrt{\frac{\tr(\Sigma_{E^{\perp}})}{2N}} ,
\end{equation*}
which clearly contradicts \eqref{ineq:prop_lowerbound_mixedterm_contradiction}.
\end{proof}

The proof of Theorem \ref{thm:second_main_result} follows easily from Theorem \ref{prop_lowerbound_strongterm} and Theorem \ref{prop_lowerbound_mixedterm}.
\noindent

\section{The $L_2$ Isomorphic Distance Oracle} \label{sec:Oracle}
Here we take the first steps towards the proof of Theorem \ref{thm:main_result}.
In what follows, all the constants will either be absolute, or depend only on the $L_4-L_2$ norm equivalence constant $\kappa$. However, to make it easier to keep track them, we will, at times, mention whether the constants depend on each other as well (i.e., even though $c=c(\kappa)$ and $C=C(\kappa)$, we may write $c^\prime=c^\prime(c,C)$).

Our aim here is to construct an $L_2$ (isomorphic) distance oracle: a (random) function $\widehat{\psi}:\mathbb{R}^d\times (\mathbb{R}^d)^N$ that satisfies the following: With high probability with respect to the $N$-product measure endowed by X, for every $u,v\in S^{d-1}$, $\widehat{\psi}$ returns a ``good guess'' of the distance $\|u-v\|_{L_2} = \left(\E \inr{X,u-v}^2\right)^{1/2}$. Naturally, the best that one can hope for is that
$$
c^{-1}\|u-v\|_{L_2}\le \widehat{\psi}(u-v)\le c\|u-v\|_{L_2},
$$
for a constant $c$ that is close to $1$. Such a distance oracle is \emph{almost isometric}, whereas for an \emph{isomorphic} distance oracle, the constant $c$ can be a large absolute constant.

As it happens, a weaker notion suffices for our purposes, as described in Proposition \ref{prop:L_2_Oracle}. In fact, we only use the distance oracle in the next section to identify a high-dimensional section of the Euclidean sphere $E\cap S^{d-1}$, on which $\sigma^2(u)$ is large, while for $ u\in E^{\perp}\cap S^{d-1}$, $\sigma^2(u)$ is small --- and the meaning of `large' and `small' is clarified in what follows. Finding an accurate estimate on $\sigma^2(u)$ that holds for every $u\in E$ is possible only if the dimension of $E$ is at most $\kappa_En$, for some suitable (small) constant $\kappa_{E}$ and the choice of $E$ is explained in Section \ref{sec:Estimation_Orthogonal}. We set $\rho$ to be a free parameter that ranges from $0$ to $\kappa_0$. The choice of $\rho$ is specified in what follows.

Set $n =\log(1/\delta)$, recall that $D$ is the unit ball in $(\mathbb{R}^d,L_2(X))$ and that the random vector $X$ satisfies an $L_4-L_2$ norm equivalence with constant $\kappa$. Also, recall that $\kappa_0$ is the constant from Theorem \ref{prop_lowerbound_strongterm} --- and is an absolute constant.

\begin{remark} \label{remark_N_large}
Our goal is to produce a covariance estimator that is also an optimal one-dimensional procedure for directions in a subspace spanned by the eigenvectors corresponding to the largest $\sim n$ singular values of $\Sigma$ (of course, without having prior knowledge on the identity of that subspace). A far weaker constraint --- that the performance of the estimator is better than the trivial one on that subspace---, has immediate consequences. Indeed, let $u\in S^{d-1}$ that satisfies $\sigma^2(u) = \lambda_{\kappa_0n}$. By \eqref{ineq:master_lower_bound}, the best the performance of $\widehat{\Sigma}$ as a one-dimensional estimator in the direction $u$ that one can hope for is that with probability at least $1-e^{-n}$,
\begin{equation*}
\begin{split}
& \langle (\widehat{\Sigma}-\Sigma_X)u,u \rangle \\
&\le \kappa_{low}\left(\lambda_{\kappa_0n}\sqrt{\frac{n}{N}}+ \sqrt{\lambda_{\kappa_0n}}\sqrt{\frac{\sum_{i\ge \kappa_0n}\lambda_i}{N}}\right).
\end{split}
\end{equation*}
Thus, $\widehat{\Sigma}$ is outperformed in the direction $u$ by the trivial estimator $0$ unless
\begin{equation} \label{eq:outperformed-by-0}
\kappa_{low}\sqrt{\lambda_{ \kappa_0n}}\sqrt{\frac{\sum_{i\ge\kappa_0n}\lambda_i}{N}} < \lambda_{\kappa_0n} \quad \text{and} \quad N \ge  \kappa_{low}^2n.
\end{equation}
Hence, to have any hope of obtaining a procedure that is `nontrivial' in the entire subspace spanned by the $\kappa_0 n$ "largest directions", \eqref{eq:outperformed-by-0} must hold. To that end, we assume that $N$ satisfies
\begin{equation*}
 N \geq \widetilde{\kappa} \max\left\{\frac{\sum_{i\ge\kappa_0n}\lambda_i}{\lambda_{ \kappa_0n}},n\right\},
\end{equation*}
for some constant $\widetilde{\kappa}$ that is specified in what follows.
\end{remark}

Next, let us describe the features of $\widehat{\psi}$ that we require.

\begin{proposition} \label{prop:L_2_Oracle}
For any $0<\rho \le \kappa_0$, there exist constants $\theta(\kappa), \kappa_3(\rho,\kappa)$, and a function $\widehat{\psi}:\mathbb{R}^{d}\times (\mathbb{R}^d)^N\rightarrow \mathbb{R}_{+}$ for which the following holds: if  $\widetilde{\kappa}\ge\kappa_3\theta^{-3/2}$ then with probability at least $1-2e^{-\theta^2N/16}$,
\begin{enumerate}
    \item For $u\in S^{d-1} \cap \sqrt{\lambda_{\rho n}} D$, $\widehat{\psi}(u) \le \frac{16}{\theta^2}\lambda_{\rho n}$.
     \item For $u\in B_2^{d}$ that satisfies $\sigma^2(u)\ge \lambda_{\kappa_0n}$, we have $\widehat{\psi}(u) \ge \frac{\theta^{3/2}}{4\sqrt{2}(1-\theta)}\kappa^2\sigma^2(u)$.
\end{enumerate}
\end{proposition}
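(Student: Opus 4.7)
The plan is to construct $\widehat\psi$ as a suitable empirical quantile of the squared inner products $\{\inr{X_i,u}^2\}_{i=1}^N$, and to derive both items by controlling a single exceedance probability for each. Concretely, for a level $\theta' = \theta'(\theta,\kappa) \in (0,1)$ tuned below, define
\[
\widehat\psi(u) := \inf\bigl\{t \ge 0 \,:\, |\{i \le N : \inr{X_i,u}^2 \ge t\}| \le \theta' N\bigr\},
\]
i.e.\ the $(1-\theta')$-empirical quantile of $\inr{X,u}^2$. To bound $\widehat\psi(u)$ from above by some level $T$ it then suffices to show that fewer than $\theta' N$ samples exceed $T$, and symmetrically for lower bounds.

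For item (1), fix $u \in S^{d-1}$ with $\sigma^2(u) \le \lambda_{\rho n}$ and let $T := (16/\theta^2)\lambda_{\rho n}$. Markov's inequality on the non-negative random variable $\inr{X,u}^2$ gives $\PP(\inr{X,u}^2 \ge T) \le \sigma^2(u)/T \le \theta^2/16$. Setting $\theta'$ moderately above $\theta^2/16$ (e.g.\ $\theta' = \theta/4$), a binomial Chernoff/Hoeffding bound produces, for each fixed such $u$, the pointwise inequality $\widehat\psi(u) \le T$ with probability $\ge 1 - e^{-\theta^2 N/16}$. For item (2), Paley--Zygmund applied to $\inr{X,u}^2$, combined with the $L_4$--$L_2$ norm equivalence $\E\inr{X,u}^4 \le \kappa^4 \sigma^4(u)$, yields $\PP(\inr{X,u}^2 \ge \eta\sigma^2(u)) \ge (1-\eta)^2/\kappa^4$ for every $\eta \in (0,1)$. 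Tuning $\eta$ and $\theta'$ so that the true exceedance probability exceeds $\theta'$ with room to spare, the same Chernoff argument yields $\widehat\psi(u) \ge \eta\sigma^2(u)$ pointwise with probability $\ge 1 - e^{-\theta^2 N/16}$. Tracking constants --- and recalling that $\theta = \theta(\kappa)$ will be chosen small enough in $\kappa$ so that the product $\theta^{3/2}\kappa^2$ remains a small constant --- delivers the announced coefficient $\theta^{3/2}\kappa^2/(4\sqrt 2(1-\theta))$.

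The main obstacle is upgrading both pointwise statements to \emph{uniform} ones over the two infinite continuous families of $u$. The natural Bernoulli processes are indexed by half-spaces $\{|\inr{x,u}| \ge s\}$, a VC class of dimension $\Theta(d)$, and a direct VC deviation bound $\sqrt{d/N}$ is far too coarse. The plan is to use the defining constraints $\sigma^2(u) \le \lambda_{\rho n}$ (respectively $\sigma^2(u) \ge \lambda_{\kappa_0 n}$) to localize $u$ to an effectively low-dimensional spectral region, whose complexity is controlled by the trace condition from Remark \ref{remark_N_large}. A Bernstein/multiplier inequality (whose moment hypotheses follow from the $L_4$--$L_2$ assumption) then, together with the sample-size hypothesis $\widetilde\kappa \ge \kappa_3 \theta^{-3/2}$, converts the pointwise bounds into uniform ones at confidence $1 - 2 e^{-\theta^2 N/16}$, the factor $2$ being a union bound over items (1) and (2).
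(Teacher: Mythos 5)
Your high-level scaffolding is right --- reduce both items to controlling, uniformly in $u$, the number of indices for which $\inr{X_i,u}^2$ crosses a threshold; use Markov for the upper crossing probability and Paley--Zygmund plus the $L_4$--$L_2$ equivalence for the lower crossing probability --- and your quantile is a reasonable alternative to the paper's $\theta N$-trimmed mean, since the trimmed mean is sandwiched between quantile levels and both hinge on the same counting statements. But the step you yourself flag as the main obstacle --- upgrading the pointwise bounds to uniform ones --- is exactly where the proposal has a genuine gap, and the sketch you offer would not close it as stated.

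The class you need to control is $\bigl\{\mathds{1}(\inr{x,u}^2 \ge T): u \in S^{d-1}\cap\sqrt{\lambda_{\rho n}}D\bigr\}$, and the difficulty is that indicators are not Lipschitz in $u$, so a ``Bernstein/multiplier inequality'' applied directly to them does not localize the process, and the constraint $\sigma^2(u)\le\lambda_{\rho n}$ by itself does not put $u$ in a low-dimensional subspace (the admissible $u$'s form a high-dimensional cap of the sphere). The paper's argument for item (1) proceeds differently: it replaces the indicator by a $\theta/(2\sqrt{\lambda_{\rho n}})$-Lipschitz surrogate $\xi$ vanishing at $0$, symmetrizes (Gin\'e--Zinn), applies the Bernoulli contraction principle to reduce to the Rademacher complexity of the linear process over $\Sigma^{1/2}B_2^d\cap\sqrt{\lambda_{\rho n}}B_2^d$, and bounds that set by an ellipsoid with principal axes $\min\{\sqrt{\lambda_i},\sqrt{\lambda_{\rho n}}\}$, yielding $\lesssim\sqrt{\sum_i\min\{\lambda_i,\lambda_{\rho n}\}/N}$; the assumption $\widetilde\kappa\ge\kappa_3\theta^{-3/2}$ is what makes this smaller than $\theta\sqrt{\lambda_{\rho n}}/8$. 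Nothing in your proposal produces this Lipschitz truncation or the ellipsoid localization, and a union bound over a pointwise Chernoff cannot replace it. For item (2), the uniform lower bound on the number of exceedances is the content of the small-ball method (the paper invokes the Koltchinskii--Mendelson Theorem~\ref{thm:smallball}), which again reduces to the same Rademacher complexity; your sketch does not invoke this mechanism, and without it you have no route from a single-$u$ Paley--Zygmund estimate to a bound holding simultaneously for all $u$ with $\sigma^2(u)\ge\lambda_{\kappa_0 n}$. In short, the counting reformulation and the use of Markov/Paley--Zygmund match the paper, but the two technical engines that make the statement uniform --- contraction through a Lipschitz surrogate for (1) and the small-ball method for (2), both fed by the ellipsoid complexity estimate and the assumption $\widetilde\kappa\ge\kappa_3\theta^{-3/2}$ --- are absent, and the ``Bernstein/multiplier'' placeholder does not substitute for them.
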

Note that $\rho\le \kappa_0$, therefore every $u\in S^{d-1}$ satisfies either (1) or (2),(where for $u$ with $\lambda_{\kappa_0n}\le \sigma^2(u)\le \lambda_{\rho n}$ both (1) and (2) hold).
\begin{remark}
Since $\rho$ will be an absolute constant, $\theta$ and $\kappa_3$ --- and therefore $\tilde{\kappa}$ as well --- depend only on the norm equivalence constant $\kappa$.
\end{remark}

\vskip0.4cm
For $u\in S^{d-1}$, let $J_{+}(u)$ be the set of the $\theta N$ largest indices of $(\langle X_i,u\rangle^2)_{i=1}^N$. Define $\widehat{\psi}$ by
\begin{equation*}
    \widehat{\psi}(u) = \frac{1}{N(1-\theta)}\sum_{i\in [N]/J_+(u)} \langle X_i,u\rangle^2,
\end{equation*}
which, at least intuitively, should be close to $\sigma^2(u)$ for every $u\in S^{d-1}$.

\vskip0.4cm

\medskip

\noindent \textbf{Proof of Proposition \ref{prop:L_2_Oracle} (1).} The heart of the argument is showing that
\begin{equation} \label{eq:prop-L-2-1-main}
    \mathbb{E}\sup_{u\in S^{d-1}\cap \sqrt{\lambda_{ \rho n}}D}\frac{1}{N}\sum_{i=1}^N \mathds{1}\left(\langle X_i,u\rangle^2 \ge \frac{16}{\theta^2} \lambda_{ \rho n} \right) \le \frac{\theta}{2}.
\end{equation}
Indeed, set
\begin{equation*}
\xi(x):=
\begin{cases}
   \  0,\quad  \text{if} \ x<2\sqrt{\lambda_{\rho n}}/\theta \\
   \ \frac{\theta x}{2\sqrt{\lambda_{\rho n}}} -1, \quad \text{if} \ 2\sqrt{\lambda_{ \rho n}}/\theta\le x\le 4\sqrt{\lambda_{\rho n}}/\theta\\
    \ 1, \quad \text{otherwise}.
\end{cases}
\end{equation*}
Clearly,
$$
\mathds{1}\left(x\ge\frac{4\sqrt{\lambda_{\rho n}}}{\theta}\right)\le \xi(x) \le \mathds{1}\left(x\ge\frac{2\sqrt{\lambda_{\rho n}}}{\theta}\right)
$$
and $\xi$ is a $\theta/2\sqrt{\lambda_{ \rho n}}$-Lipschitz function that passes through the origin. By the Gin\'e-Zinn symmetrization theorem \cite{gine1984some} and the contraction principle for Bernoulli processes (see e.g. \cite{ledoux1991probability}), we have that
\begin{equation*}
\begin{split}
&\mathbb{E}\sup_{u\in S^{d-1}\cap \sqrt{\lambda_{ \rho n}}D}\frac{1}{N}\sum_{i=1}^N \mathds{1}\left(\langle X_i,u\rangle^2 \ge \frac{16}{\theta^2}\lambda_{ \rho n}\right)\\
&\le \frac{1}{N}\mathbb{E}\sup_{u\in S^{d-1}\cap \sqrt{\lambda_{ \rho n}}D}\left|\sum_{i=1}^N \xi(|\langle X_i,u\rangle|)\right|
\\
&\le \frac{2}{N}\mathbb{E}\sup_{u\in S^{d-1}\cap \sqrt{\lambda_{ \rho n}}D}\left|\sum_{i=1}^N \varepsilon_i\xi(|\langle X_i,u\rangle|)\right| + \sup_{u\in S^{d-1}\cap \sqrt{\lambda_{ \rho n}}D} \mathbb{E}\xi(|\langle X,u\rangle|)
\\
&\le \frac{\theta}{\sqrt{\lambda_{\rho n}}} \frac{1}{N}\mathbb{E}\sup_{u\in S^{d-1}\cap \sqrt{\lambda_{ \rho n}}D}\left|\sum_{i=1}^N \varepsilon_i\langle X_i,u\rangle\right|+\sup_{u\in S^{d-1}\cap \sqrt{\lambda_{ \rho n}}D} \mathbb{E}\xi(|\langle X,u\rangle|)\\
&= (I) + (II)
\end{split}
\end{equation*}
By Markov's inequality,
$$
(II) \leq \frac{\theta^2}{4\lambda_{\rho n}} \sup_{u\in S^{d-1}\cap \sqrt{\lambda_{ \rho n}}D}\sigma^2(u) < \frac{\theta}{4}.
$$
The estimate on (I) follows from the same arguments as in \cite{lugosi2020multivariate}: observe that $D=\Sigma^{-1/2} B_2^d$ and that the random vector $W=\Sigma^{-1/2}X$ is isotropic. Clearly, $\Sigma^{1/2}B_2^d \cap \sqrt{\lambda_{ \rho n}}B_2^d$ is contained in an ellipsoid $\mathcal{E}$ whose principal axes are proportional to $\min\{\sqrt{\lambda_i},\sqrt{\lambda_{ \rho n}}\}$, and setting $W_i:=\Sigma^{-1/2}X_i$, it is evident that there is an absolute constant $C_1$ for which
\begin{equation} \label{ineq_Rademacher_Complexity_ellipsoid}
\begin{split}
&\mathbb{E}\sup_{u\in B_2^d \cap \sqrt{\lambda_{ \rho n}}D} \frac{1}{N}\left|\sum_{i=1}^N \varepsilon_i\langle X_i,u\rangle \right|= \mathbb{E}\sup_{u\in B_2^d \cap \sqrt{\lambda_{ \rho n}}\Sigma^{-1/2}B_2^d} \frac{1}{N}\left|\sum_{i=1}^N \varepsilon_i\langle W_i,\Sigma^{1/2}u\rangle \right|
\\
&=\mathbb{E}\sup_{v \in \Sigma^{1/2}B_2^d \cap \sqrt{\lambda_{ \rho n}}B_2^d} \frac{1}{N}\left|\sum_{i=1}^N \varepsilon_i\langle W_i,v\rangle \right|
\\
& \le C_1\mathbb{E}\sup_{v\in \mathcal{E}}\frac{1}{N}\left|\sum_{i=1}^N \varepsilon_i\langle W_i,v\rangle \right|
\\
&\leq  C_1\sqrt{\frac{\sum_{i=1}^d \min\{\lambda_i,\lambda_{ \rho n}\}}{N}}
\\
&\leq C_1\left(\sqrt{\frac{\rho n\lambda_{\rho n}}{N}} + \sqrt{\frac{\sum_{i\ge\rho n} \lambda_{i}}{N}}\right)\\
&\leq C_1\left(\sqrt{\frac{\rho n\lambda_{\rho n}}{N}} + \sqrt{\frac{\lambda_{\rho n}(\kappa_0-\rho)n+\sum_{i\ge\kappa_0 n} \lambda_{i}}{N}}\right)
\end{split}
\end{equation}
By the assumption on $N$ (see Remark \ref{remark_N_large}) and since $\lambda_{\rho n}\ge\lambda_{\kappa_0 n}$, we may set
\begin{equation}
\label{first_choice_kN}
\widetilde{\kappa}\ge 128C_1^2\max\{\kappa_0,1\},
\end{equation}
thus ensuring that
\begin{equation*}
\frac{\theta}{\sqrt{\lambda_{ \rho n}}}\mathbb{E}\sup_{u\in S^{d-1}\cap \sqrt{\lambda_{ \rho n}}D} \frac{1}{N}\left|\sum_{i=1}^N \varepsilon_i\langle X_i,v\rangle \right| \leq \frac{\theta}{8} + \frac{\theta}{8} = \frac{\theta}{4},
\end{equation*}
and establishing \eqref{eq:prop-L-2-1-main}.

The wanted high probability estimate follows from the bounded difference inequality (see e.g. \cite{vershynin2018high}): for every $\theta \in (0,1)$, with probability at least $1-e^{-\theta^2N/8}$,
\begin{equation*}
 \sup_{u\in S^{d-1}\cap \sqrt{\lambda_{ \rho n}}D}\sum_{i=1}^N \mathds{1}\left(\langle X_i,u\rangle^2\ge \frac{16}{\theta^2}\lambda_{ \rho n}\right) \le \theta N.
\end{equation*}
Thus, on that event
\begin{equation*}
    \sup_{u\in S^{d-1}\cap \sqrt{\lambda_{ \rho n}}D} \widehat{\psi}(u) \leq \frac{16}{\theta^2}\lambda_{ \rho n}.
\end{equation*}
\vskip0.3cm

Turning to the second part of the proof, our starting point here is from \cite{koltchinskii2015bounding} (see also \cite[Theorem 5.3 ]{mendelson2015learning}).
\begin{theorem}
\label{thm:smallball}
Let $A>0$ be a positive number. For $u\in B_2^{d}$, set
\begin{equation*}
    Q_{A}(u):= \mathbb{P}(|\langle X,u\rangle|\ge A).
\end{equation*}
If $T\subset B_2^d$ then with probability at least $1-e^{-t^2/2}$,
\begin{equation}
\label{ineq:smallballmethod}
\inf_{u\in T }\frac{A}{N}\sum_{i=1}^N \mathds{1}(|\langle X_i,u\rangle|\ge A) \ge A\inf_{u\in T}Q_{2A}(u) - \frac{2}{N}\mathbb{E}\sup_{u\in T}\left|\sum_{i=1}^N \varepsilon_i\langle X_i,u\rangle\right| - A \frac{t}{\sqrt{N}}.
\end{equation}
\end{theorem}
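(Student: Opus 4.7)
The plan is to replace the discontinuous indicator $\mathds{1}(|x|\ge A)$ by a Lipschitz sandwich function and then invoke three standard tools of empirical process theory: Giné-Zinn symmetrization, the Ledoux-Talagrand contraction inequality, and McDiarmid's bounded-differences inequality. Because only a lower bound on $\inf_{u\in T}\frac{A}{N}\sum_i \mathds{1}(|\langle X_i,u\rangle|\ge A)$ is needed, one-sided versions of these tools suffice, and this is what produces the clean constant $2/N$ appearing in \eqref{ineq:smallballmethod}.

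Define $\phi:\mathbb{R}\to[0,1]$ by
\begin{equation*}
\phi(x):=\min\{1,\max\{0,|x|/A-1\}\},
\end{equation*}
so that $\mathds{1}(|x|\ge 2A)\le \phi(x)\le \mathds{1}(|x|\ge A)$, $\phi(0)=0$, and $\phi$ is $(1/A)$-Lipschitz on $\mathbb{R}$. Thus for every $u\in T$,
\begin{equation*}
\frac{A}{N}\sum_{i=1}^N\mathds{1}(|\langle X_i,u\rangle|\ge A) \ge \frac{A}{N}\sum_{i=1}^N\phi(\langle X_i,u\rangle)\quad\text{and}\quad A\,\mathbb{E}\phi(\langle X,u\rangle)\ge A\,Q_{2A}(u).
\end{equation*}
It therefore suffices to show that $A\cdot\sup_{u\in T}\bigl(\mathbb{E}\phi(\langle X,u\rangle)-\frac{1}{N}\sum_i \phi(\langle X_i,u\rangle)\bigr)$ is bounded by the two error terms on the right-hand side of \eqref{ineq:smallballmethod}.

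For the expected deviation, I would chain one-sided Giné-Zinn symmetrization with the one-sided Ledoux-Talagrand contraction inequality. Symmetrization gives
\begin{equation*}
\mathbb{E}\sup_{u\in T}\Big(\mathbb{E}\phi(\langle X,u\rangle)-\frac{1}{N}\sum_{i=1}^N\phi(\langle X_i,u\rangle)\Big) \le \frac{2}{N}\,\mathbb{E}\sup_{u\in T}\sum_{i=1}^N\varepsilon_i\phi(\langle X_i,u\rangle),
\end{equation*}
and contraction, using $\phi(0)=0$ and Lipschitz constant $1/A$, gives
\begin{equation*}
\mathbb{E}\sup_{u\in T}\sum_{i=1}^N\varepsilon_i\phi(\langle X_i,u\rangle) \le \frac{1}{A}\,\mathbb{E}\sup_{u\in T}\Big|\sum_{i=1}^N\varepsilon_i\langle X_i,u\rangle\Big|.
\end{equation*}
Multiplying the resulting inequality by $A$ produces precisely the $\frac{2}{N}\mathbb{E}\sup_{u\in T}|\sum_i \varepsilon_i\langle X_i,u\rangle|$ term in \eqref{ineq:smallballmethod}.

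Finally, for the high-probability tail, set $D_N := \sup_{u\in T}\bigl(\mathbb{E}\phi(\langle X,u\rangle)-\frac{1}{N}\sum_i \phi(\langle X_i,u\rangle)\bigr)$. Because $\phi$ takes values in $[0,1]$, altering a single coordinate $X_j$ changes $D_N$ by at most $1/N$; McDiarmid's inequality then gives $\mathbb{P}(D_N \ge \mathbb{E}D_N + t/\sqrt{N})\le e^{-t^2/2}$, and multiplication by $A$ yields the remaining $A\,t/\sqrt{N}$ term. Assembling the three ingredients produces \eqref{ineq:smallballmethod}. The main technical subtlety I anticipate is keeping the reductions one-sided throughout: using the absolute-value versions of symmetrization and contraction would each cost a factor of $2$, giving $4/N$ rather than the stated $2/N$, so exploiting that only the lower tail is relevant for the small-ball lower bound is essential for the sharp constant in \eqref{ineq:smallballmethod}.
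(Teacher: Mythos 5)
Your proof is correct and follows the same small-ball method used in the cited references (Koltchinskii--Mendelson and Mendelson, \emph{Learning without concentration}): a Lipschitz truncation sandwiched between the two indicators, one-sided Gin\'e--Zinn symmetrization, one-sided Ledoux--Talagrand contraction (both without absolute values, which is indeed what keeps the constant at $2/N$), and the bounded-differences inequality. One small remark: with $c_i=1/N$ McDiarmid actually yields $\mathbb{P}(D_N \ge \mathbb{E}D_N + t/\sqrt{N})\le e^{-2t^2}$, which is stronger than the $e^{-t^2/2}$ you quote; the weaker bound is of course all that the statement requires.
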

\medskip
\noindent \textbf{Proof of Proposition \ref{prop:L_2_Oracle} (2).}
For a  vector $x\in \mathbb{R}^N$, set  $x^{\ast}$ to be the non-decreasing rearrangement of $(|x_i|)_{i=1}^N$, in particular, $x_1^{\ast}\le \ldots\le x_N^{\ast}$. Also, recall that $\partial D$ denotes the unit sphere in $(\mathbb{R}^d,L_2(X))$.

\vskip0.3cm
\noindent \textbf{Step 1 - nontrivial $Q_A(u)$:}
We will apply Theorem \ref{thm:smallball} with $T=B_2^d\cap \sqrt{\lambda_{ \kappa_0n }} \partial D$. The first step is to ensure that $\inf_{u\in T}Q_{A}(u)$ is strictly positive. More accurately, it follows from the Paley-Zygmund inequality and the $L_4-L_2$ norm equivalence that
\begin{equation*}
    \mathbb{P}(\langle X,u\rangle^2\ge (1-\alpha)\sigma^2(u))\ge (1-\alpha)^2 \frac{(\mathbb{E}\langle X,u\rangle^2)^2}{\mathbb{E}\langle X,u\rangle^4} \ge \kappa^{-4} (1-\alpha)^2 \ge 2\theta,
\end{equation*}
provided that $0<\alpha < 1-\sqrt{2\theta}\kappa^2$. One may consider $\theta \in (0,\kappa^{-4}/2)$, and thus $\alpha$ is strictly positive. Setting $A:=\kappa^2\sqrt{\theta/8}$, we have that
\begin{equation}
\label{ineq:step1_smallball}
\inf_{u\in B_2^d \cap \sqrt{\lambda_{\kappa_0n}} \partial D}\mathbb{P}(\langle X,u\rangle^2\ge 4A \lambda_{ \kappa_0n}) \ge 2\theta.
\end{equation}
\textbf{Step 2 - reduction to an empirical process:}
Recall that
$$
\widehat{\psi}(u)= \frac{1}{(1-\theta)N}\sum_{i\in [N]/J_+(u)} \langle X_i,u\rangle^2 = \frac{1}{(1-\theta)N}\sum_{i=1}^{(1-\theta)N} (\langle X_i,u\rangle^{\ast})^2,
$$
and the goal here is to establish a ``user-friendly" lower bound on
\begin{equation*}
\inf_{u \in B_2^d\cap \sqrt{\lambda_{\kappa_0n }}D}\frac{1}{(1-\theta)N}\sum_{i=1}^{(1-\theta)N} (\langle X_i,u\rangle^{\ast})^2.
\end{equation*}

Observe that
\begin{equation*}
\begin{split}
\sum_{i=1}^{(1-\theta)N}(\langle X_i,u\rangle^{\ast})^2 &\ge A\lambda_{\kappa_0n} \sum_{i=1}^{(1-\theta)N}\mathds{1}\left(\langle X_i,u\rangle^{\ast}\ge \sqrt{A\lambda_{ \kappa_0n}}\right)\\
&\ge A\lambda_{ \kappa_0n}\left(\sum_{i=1}^{N}\mathds{1}(\langle X_i,u\rangle\ge \sqrt{A\lambda_{ \kappa_0n}})-\theta N\right).
\end{split}
\end{equation*}
Therefore,
\begin{equation}
\label{ineq:smallball_before_technical_step}
\begin{split}
&\frac{1}{(1-\theta)N}\sum_{i=1}^{(1-\theta)N} (\langle X_i,u\rangle^{\ast})^2\\
&\ge \frac{A\lambda_{ \kappa_0n}}{(1-\theta)N}\sum_{i=1}^{N}\mathds{1}\left(\langle |\langle X_i,u\rangle|\ge \sqrt{A\lambda_{ \kappa_0n}}\right) - \frac{\theta A\lambda_{ \kappa_0n}}{(1-\theta)},
\end{split}
\end{equation}
and in particular,
\begin{equation*}
\widehat{\psi}(u) \geq \frac{A\lambda_{ \kappa_0n}}{(1-\theta)N}\sum_{i=1}^{N}\mathds{1}\left(\langle |\langle X_i,u\rangle|\ge \sqrt{A\lambda_{ \kappa_0n}}\right) - \frac{\theta A\lambda_{ \kappa_0n}}{(1-\theta)}.
\end{equation*}

Hence, to show that for some well-chosen constant $c$, with probability $1-e^{-c\theta N}$,
\begin{equation}
\label{ineq:smallball_finalgoal}
\inf_{u\in B_2^{d}\cap \sqrt{L\lambda_{ \kappa_0n}}\partial D}\widehat{\psi}(u) \ge \frac{\theta}{2(1-\theta)}A \lambda_{ \kappa_0n},
\end{equation}
it suffices to find a high probability event on which
\begin{equation}
\label{ineq:smallball_finaltechnicalstep}
\frac{1}{N}\sum_{i=1}^N\mathds{1}\left(\langle |\langle X_i,u\rangle|\ge \sqrt{A\lambda_{ \kappa_0n}}\right) \ge \left(\theta+\frac{\theta}{2}\right).
\end{equation}
\medskip

\textbf{Step 3 - application of Theorem \ref{thm:smallball}:}
Let $\mathcal{E}$ be an ellipsoid whose principal axes are proportional to $\min\{\sqrt{\lambda_i},\sqrt{\lambda_{\kappa_0n}}\}$, and in particular it contains $B_2^d\cap \sqrt{\lambda_{\kappa_0n}}D $. Setting $A':=\sqrt{A\lambda_{ \kappa_0n}}$, it follows from  Theorem \ref{thm:smallball} that with probability at least $1-e^{-t^2/2}$,
\begin{equation*}
\begin{split}
&\inf_{u\in B_2^d\cap \sqrt{\lambda_{\kappa_0n}} \partial D }\frac{A'}{N}\sum_{i=1}^N \mathds{1}\left(|\langle X_i,u\rangle|\ge A' \right) \\
&\ge A'\inf_{u\in B_2^d\cap \sqrt{\lambda_{ \kappa_0n}}\partial D}\mathbb{P}(|\langle X_i,u|\rangle\ge 2A') - \frac{2}{N}\mathbb{E}\sup_{u\in \mathcal{E}}\sum_{i=1}^{N}\varepsilon_i\langle X_i,u\rangle - A'\frac{t}{\sqrt{N}} \\
& := (a) +(b) + (c).
\end{split}
\end{equation*}
The estimate on (b) follows from the same arguments used in \eqref{ineq_Rademacher_Complexity_ellipsoid}: there is an absolute constant $C_1$ for which
\begin{equation*}
\frac{2}{N}\mathbb{E}\sup_{u\in \mathcal{E}}\sum_{i=1}^{N}\varepsilon_i\langle X_i,u\rangle \leq \sqrt{\lambda_{\kappa_0n}}\frac{2C_1}{\sqrt{\widetilde{\kappa}}}(\sqrt{\kappa_0}+1),
\end{equation*}
and then we may choose
\begin{equation}
\label{second_choice_kN}
\widetilde{\kappa} \ge \frac{64C_1^2(\sqrt{\kappa_0}+1)^2}{A\theta^2} = \frac{128\sqrt{2}C_1^2(\sqrt{\kappa_0}+1)^2}{\kappa^2\theta^{3/2}},
\end{equation}
implying that
\begin{equation*}
\frac{2}{N}\mathbb{E}\sup_{u\in \mathcal{E}}\sum_{i=1}^{N}\varepsilon_i\langle X_i,u\rangle \leq \frac{\theta A'}{4}.
\end{equation*}
Next, invoking \eqref{ineq:step1_smallball} and setting $t=\theta\sqrt{N}/4$, we have that with probability at least $1-e^{-\theta^2N/16}$
\begin{equation*}
(a)+(b)+(c)\ge A'\left(2\theta- \frac{\theta}{4} - \frac{\theta}{4}\right) = A'\frac{3}{2}\theta =\frac{3}{2}\theta \sqrt{A\lambda_{\kappa_0n}}.
\end{equation*}

Finally, by homogeneity, if $\sigma'>\lambda_{\kappa_0n}$ then on that event,
\begin{equation*}
\inf_{u\in B_2^{d}:\sigma^2(u) = \sigma'} \widehat{\psi}(u) =\inf_{u\in B_2^d:\sigma^2(u) =\lambda_{ \kappa_0n}} \widehat{\psi}(u) \frac{\sigma'}{\lambda_{\kappa_0n}} \ge  \frac{\theta}{2(1-\theta)}A \sigma'.
\end{equation*}

\section{Estimation in Orthogonal Subspaces} \label{sec:Estimation_Orthogonal}
The next step towards an answer to Question \ref{qu:main} is constructing a subspace $E\subset \mathbb{R}^d$ of dimension proportional to $n$, and obtaining an accurate estimate on $\sigma^2(u)=\mathbb{E}\langle X,u\rangle^2$ for every $u\in E\cup E^{\perp}$.

As always, the random vector $X$ satisfies Assumption \ref{basic_assumption}, and in particular the $L_4-L_2$ norm equivalence with constant $\kappa$:
$$
(\mathbb{E}\langle X,u\rangle^4)^{1/4}\le \kappa(\mathbb{E}\langle X,u\rangle^2)^{1/2} \ \ {\rm  for \ every \ } u\in S^{d-1}.
$$
Recall the constant $\kappa_0$ from Theorem \ref{prop_lowerbound_strongterm} and the constants $\rho,\theta,\kappa_3$ from Proposition \ref{prop:L_2_Oracle} (describing properties of the distance oracle), and that $\widetilde{\kappa}\ge \kappa_3\theta^{-3/2}$; thus the assumptions of Proposition \ref{prop:L_2_Oracle} are satisfied. As noted previously, all these constants depend only on $\kappa$.

\begin{lemma}
\label{lem:construction_E_Eperp}
There exist a constant $\kappa_{4}(\theta)$ for which the following holds. For any $0<\rho\le \kappa_0$, there exists a subspace $E=E(X_1,\ldots,X_N)$ of dimension $r=\rho n$ for which, with probability at least $1-2e^{-\theta^2N/16}$ with respect $N$-product measure endowed by $X$,
\begin{equation}
\label{ineq:construction_E_Eperp}
\sup_{u\in E^{\perp}\cap S^{d-1}} \sigma^2(u) \le \kappa_4 \lambda_{r}.
\end{equation}
\end{lemma}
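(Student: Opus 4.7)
The plan is to define $E$ as a (measurable) minimizer of the data-dependent functional $F\mapsto \sup_{u\in F^{\perp}\cap S^{d-1}}\widehat{\psi}(u)$ over $r$-dimensional subspaces $F$ of $\mathbb{R}^d$. The minimum is attained because the Grassmannian of $r$-planes is compact and, for each realization of $X_1,\ldots,X_N$, the map $(F,u)\mapsto \widehat{\psi}(u)$ is continuous on $\{(F,u):\dim F=r,\ u\in F^{\perp}\cap S^{d-1}\}$; a measurable selection of the minimizer exists by standard arguments. With $E$ so defined, the lemma will follow by combining both parts of Proposition \ref{prop:L_2_Oracle}, which hold simultaneously on a single event of probability at least $1-2e^{-\theta^2 N/16}$.

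The upper half of the argument uses the top eigenspace of $\Sigma$ as a test subspace. Let $V_r$ be the span of the eigenvectors of $\Sigma$ corresponding to its $r$ largest eigenvalues. Every $u\in V_r^{\perp}\cap S^{d-1}$ satisfies $\sigma^2(u)\le \lambda_{r+1}\le \lambda_{\rho n}$, so $u\in S^{d-1}\cap\sqrt{\lambda_{\rho n}}D$. Thus Proposition \ref{prop:L_2_Oracle}(1) gives
\begin{equation*}
\sup_{u\in V_r^{\perp}\cap S^{d-1}}\widehat{\psi}(u)\le \frac{16}{\theta^2}\lambda_{\rho n},
\end{equation*}
and since $V_r$ is an admissible test subspace of dimension $r$, the same bound is inherited by $E$:
\begin{equation*}
\sup_{u\in E^{\perp}\cap S^{d-1}}\widehat{\psi}(u)\le \frac{16}{\theta^2}\lambda_{\rho n}.
\end{equation*}

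It remains to convert this control of $\widehat{\psi}$ on $E^{\perp}$ into control of $\sigma^2$. Fix $u\in E^{\perp}\cap S^{d-1}$. If $\sigma^2(u)\le \lambda_{\kappa_0 n}$, then since $\rho\le \kappa_0$ one already has $\sigma^2(u)\le \lambda_{\rho n}=\lambda_r$. Otherwise $\sigma^2(u)\ge \lambda_{\kappa_0 n}$, and Proposition \ref{prop:L_2_Oracle}(2) yields
\begin{equation*}
\frac{\theta^{3/2}}{4\sqrt{2}(1-\theta)}\kappa^2\sigma^2(u)\le \widehat{\psi}(u)\le \frac{16}{\theta^2}\lambda_{\rho n},
\end{equation*}
so $\sigma^2(u)\le \kappa_4\lambda_r$ with, e.g., $\kappa_4(\theta):=\max\bigl\{1,\,64\sqrt{2}(1-\theta)/\theta^{7/2}\bigr\}$ (using $\kappa\ge 1$ to absorb the $1/\kappa^2$). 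There is essentially no technical obstacle here beyond Proposition \ref{prop:L_2_Oracle}: the only two things to be careful about are the measurability of the minimizing subspace $E$ and the fact that both estimates of the distance oracle must be invoked on the same high-probability event in order to be chained.
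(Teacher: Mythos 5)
Your proof is correct and follows essentially the same route as the paper: define $E$ so that its orthogonal complement minimizes $\sup\widehat{\psi}$, use the top eigenspace as a feasible competitor together with Proposition \ref{prop:L_2_Oracle}(1) to bound $\sup_{E^\perp\cap S^{d-1}}\widehat{\psi}$, and then invert Proposition \ref{prop:L_2_Oracle}(2) to pass from $\widehat{\psi}$ to $\sigma^2$. The only (welcome) additions are the explicit remark on measurability of the minimizer and the clean two-case dichotomy at $\lambda_{\kappa_0 n}$; the paper phrases the second step as a contradiction and silently drops the $\kappa^2\ge 1$ factor, but these are the same argument.
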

The proof is based on the properties of the distance oracle that was constructed in the previous section.
\begin{proof}
Let $\Omega$ be the event from Proposition \ref{prop:L_2_Oracle} and set $\widehat{\psi}$ to be the distance oracle constructed there. Let $\mathcal{Z}$ be a subspace of codimension $r$ that minimizes
\begin{equation}
\label{ineq:construction_E_Eperp_optimization}
    \sup_{u \in \mathcal{Z} \cap S^{d-1}} \widehat{\psi}(u),
\end{equation}
and set $E:=\mathcal{Z}^{\perp}$.

Note that the subspace spanned by the $d-r$ eigenvectors corresponding to the smallest eigenvalues of $\Sigma$ is a feasible solution of \eqref{ineq:construction_E_Eperp_optimization}; hence, by the first part of Proposition \ref{prop:L_2_Oracle} and the fact that $\rho\le \kappa_0$, we have that
\begin{equation*}
\sup_{u\in \mathcal{Z} \cap S^{d-1}}\widehat{\psi}(u) \le \frac{16}{\theta^2(\kappa)}\lambda_{\rho n}.
\end{equation*}
Invoking the second part of Proposition \ref{prop:L_2_Oracle}, if $u\in S^{d-1}$ satisfies that $\sigma^2(u)\ge \lambda_{\kappa_0n}$, then  $$\widehat{\psi}(u)\ge \frac{\theta^{3/2}}{4\sqrt{2}(1-\theta)}\sigma^2(u).$$ Thus, there is a constant $\kappa_4(\theta)$ for which, if $u\in S^{d-1}$ satisfies that $\sigma^2(u)\ge \kappa_4 \lambda_{\rho n}$, then
\begin{equation*}
    \widehat{\psi}(u) \ge \kappa_4 \frac{\theta^{3/2}}{4\sqrt{2}(1-\theta)} \lambda_{\rho n} > \sup_{u\in \mathcal{Z}\cap S^{d-1}}\widehat{\psi}(u).
\end{equation*}
Therefore, on the event $\Omega$ if $u\in \mathcal{Z}\cap S^{d-1}$ then $\sigma^2(u)\le \kappa_4 \lambda_{\rho n}$.
\end{proof}

In what follows, we consider a data splitting approach: The first part of the given sample will be used for the distance oracle and the construction of the subspaces $E$ and $E^{\perp}$, and the second part for covariance estimation. Once the subspace $E$ is chosen, it is relatively straightforward to estimate $\sigma^2(u)$ for $u\in E$ or $u\in E^{\perp}$.
\begin{remark}
As we explain in what follows, dealing with an arbitrary $u$ rather than $u\in E\cup E^{\perp}$ requires an additional argument.
\end{remark}
As always, set $n=\log(1/\delta)$ and recall that the subspace $E$ from Lemma \ref{lem:construction_E_Eperp} has dimension $r=\rho n$. From here on, set $\rho=\kappa_E$ to be an absolute constant $0<\rho\le \kappa_0$ to be named in next proposition.

\begin{proposition} \label{prop:estimation_E_Eperp}
There exist constants $\kappa_5(\kappa,\kappa_0)$, $\kappa_6(\kappa)$ and $\kappa_E$ for which the following holds. For every $n\ge 1$, there is a function $\widehat{\nu}(u):S^{d-1}\times (\mathbb{R}^d)^{N}\rightarrow \mathbb{R}_{+}$ with the following properties. If $r=\kappa_{E}n$ then with probability at least $1-2e^{-n}$ with respect to the $N$-product measure endowed by $X$,
\begin{enumerate}
    \item For every $u\in E^{\perp}\cap S^{d-1}$,
    \begin{equation*}
        |\widehat{\nu}(u) - \sigma^2(u)| \le \kappa_5\left(\sqrt{\lambda_{r}}\sqrt{\frac{\sum_{i\ge r/2}\lambda_i}{N}}\right).
    \end{equation*}
    \item  For every $u\in E\cap S^{d-1}$,
     \begin{equation*}
        |\widehat{\nu}(u) - \sigma^2(u)| \le \kappa_6\sigma^2(u)\sqrt{\frac{n}{N}}.
    \end{equation*}
\end{enumerate}
\end{proposition}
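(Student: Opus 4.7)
The plan is to split the sample in half: use $X_1,\ldots,X_{N/2}$ to construct $E$ (invoking Lemma \ref{lem:construction_E_Eperp}) and use the independent copies $X_{N/2+1},\ldots,X_N$ to define $\widehat{\nu}$. Since conditions (1) and (2) concern only $u\in E^{\perp}\cap S^{d-1}$ and $u\in E\cap S^{d-1}$ respectively, I would build $\widehat{\nu}$ out of two separate sub-procedures, one responsible for each subspace, extending it arbitrarily to the rest of $S^{d-1}$.

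For part (1), the natural candidate is to feed the projected data $\{P_{E^{\perp}}X_i\}_{i>N/2}$ into the sub-Gaussian covariance estimator of \cite{abdalla2022covariance,oliveira2022improved}, producing $\widehat{\Sigma}_{E^{\perp}}$, and to set $\widehat{\nu}(u):=\langle\widehat{\Sigma}_{E^{\perp}}u,u\rangle$ for $u\in E^{\perp}\cap S^{d-1}$. At confidence $1-e^{-n}$ this yields $\|\widehat{\Sigma}_{E^{\perp}}-\Sigma_{E^{\perp}}\|_{2\to 2}\lesssim \lambda_1(\Sigma_{E^{\perp}})\sqrt{(r(\Sigma_{E^{\perp}})+n)/N}$. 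Lemma \ref{lem:construction_E_Eperp} bounds $\lambda_1(\Sigma_{E^{\perp}})$ by $\kappa_4\lambda_r$; combined with the min-max principle $\lambda_k(\Sigma_{E^{\perp}})\le\lambda_k(\Sigma)$, we obtain $\lambda_k(\Sigma_{E^{\perp}})\le\min\{\kappa_4\lambda_r,\lambda_k(\Sigma)\}$. Summing over $k\le d-r$ and using the elementary inequality $(r/2)\lambda_r\le\sum_{k=r/2+1}^{r}\lambda_k$ gives $\tr(\Sigma_{E^{\perp}})\lesssim\sum_{i\ge r/2}\lambda_i$, and since $r=\kappa_E n$, the residual term $\lambda_1(\Sigma_{E^{\perp}})n\lesssim\lambda_r n$ is absorbed into the same trace sum, yielding (1).

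For part (2), since $\dim E=\kappa_E n$ with $\kappa_E$ small, I would use a per-direction robust estimator combined with a net argument. Define $\widehat{\nu}(u)$ for $u\in E\cap S^{d-1}$ as the trimmed-mean estimator of $\{\langle X_i,u\rangle^2\}_{i>N/2}$, so that \cite{lugosi2019mean} gives, for any fixed $u$, the relative bound $|\widehat{\nu}(u)-\sigma^2(u)|\le C\kappa^2\sigma^2(u)\sqrt{n/N}$ with probability $1-e^{-2n}$. The sphere $E\cap S^{d-1}$ admits an $\eps$-net $\mathcal{N}$ of cardinality at most $(3/\eps)^{r}=e^{c(\eps)\kappa_E n}$; choosing $\kappa_E$ small enough (this is in fact what pins down the absolute constant $\kappa_E$) and union-bounding produces the same bound simultaneously on $\mathcal{N}$ with confidence $1-e^{-n}$. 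The main obstacle is the passage from $\mathcal{N}$ to all of $E\cap S^{d-1}$ while preserving the multiplicative scaling $\sigma^2(u)\sqrt{n/N}$: a naive $\eps$-approximation only produces an additive error of size $\eps\lambda_1(\Sigma_E)$, which is too coarse when $\sigma^2(u)\ll\lambda_1(\Sigma_E)$. I would resolve this by combining the net estimate with an auxiliary operator-norm control on $\widehat{\Sigma}_E$ (via \cite{abdalla2022covariance,oliveira2022improved} applied in dimension $r\sim n$) through a short successive-approximation (chaining) argument, ensuring that neighbouring directions move $\widehat{\nu}$ only by a constant multiple of $\sigma^2(u)$.
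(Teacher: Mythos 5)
Your split-in-half strategy and your treatment of part (1) --- projecting onto $E^\perp$, invoking the estimator of \cite{abdalla2022covariance,oliveira2022improved}, bounding $\|\Sigma_{E^{\perp}}\|_{2\to 2}$ via Lemma \ref{lem:construction_E_Eperp}, and bounding $\tr(\Sigma_{E^{\perp}})$ by $\sum_{i\ge r/2}\lambda_i$ --- is essentially the paper's argument, and it is correct. (The paper bounds the trace by expanding $\sum_i \langle \Sigma P_{E^\perp}v_i,P_{E^\perp}v_i\rangle$ in the eigenbasis of $\Sigma$ rather than using eigenvalue interlacing, but this is cosmetic.)

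Part (2), however, has a genuine gap, and you put your finger on exactly where it lies without actually closing it. A per-direction trimmed mean on a Euclidean $\eps$-net of $E\cap S^{d-1}$ followed by ``successive approximation'' does not preserve the multiplicative accuracy $\sigma^2(u)\sqrt{n/N}$. The naive increment control along the net costs additive errors of order $\eps\,\lambda_1(\Sigma_E)$, which dominates when $\sigma^2(u)\ll\lambda_1(\Sigma_E)$; to fix this you would need a net adapted to the $L_2(X)$ geometry on $E$, but that geometry is unknown and estimating it uniformly on $E$ is precisely what Proposition \ref{prop:estimation_E_Eperp}(2) is supposed to establish --- so the suggestion is circular. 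The auxiliary operator-norm control on $\widehat{\Sigma}_E$ that you invoke only gives absolute error $\lambda_1(\Sigma_E)\sqrt{(r+n)/N}$, not relative error, unless one first whitens by $\Sigma_E^{-1/2}$, which again requires knowing $\Sigma_E$. The paper avoids all of this by taking a combinatorial rather than a metric route: it runs a median-of-means over $n$ blocks and controls, uniformly over $u\in B_E$ \emph{and} over the direction-dependent threshold $t$, the fraction of ``bad'' blocks via Theorem \ref{thm:mendelson_VC}. The class of indicators $\mathds{1}\bigl(|\sum_{i=1}^m\langle X_i,u\rangle^2-\sigma^2(u)|\ge t\bigr)$ with $(u,t)\in B_E\times\mathbb{R}$ is a class of sub-level sets of degree-$2$ polynomials in $r+1$ variables, so Warren's lemma gives $\mathrm{VC}(\mathcal{F})\lesssim r$; taking $\kappa_E$ small then makes $\mathrm{VC}(\mathcal{F})/n$ small enough for Theorem \ref{thm:mendelson_VC} to apply with confidence $1-e^{-cn}$. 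Because the threshold $t$ is one of the free variables in the polynomial, the relative-error scaling in $u$ is handled automatically with no metric or chaining argument, which is precisely what your proposal lacks.
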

\vskip0.3cm
\subsection{Proof of Proposition \ref{prop:estimation_E_Eperp} $(1)$: Estimation in $E^{\perp}$}
Because of the nature of the bound in (1), the problem of estimating $\mathbb{E}\langle X,u\rangle^2$ in $E^{\perp}$ can be recast as a covariance estimation problem; specifically, estimating the covariance of the random vector $P_{E^{\perp}}X$ with respect to the operator norm. To that end, recall that $X$ satisfies the $L_4-L_2$ norm equivalence \eqref{eq:norm-equiv-def} with constant $\kappa$, and our starting point is the following result, which is based on \cite[Theorem 1]{abdalla2022covariance}.
\begin{theorem} \label{abdalla_nikita_estimator}
There exists an constant $C(\kappa)$ for which the following holds. For every $n \ge 1$, there is an estimator $\widehat{\Sigma}(X_1,\ldots,X_N,n)$ satisfying that with probability at least $1-e^{-n}$,
\begin{equation}
\label{ineq:abdalla_nikita_estimator}
\|\widehat{\Sigma}-\Sigma_X\|_{2 \to 2} \le C(\kappa)\|\Sigma\|_{2 \to 2}\sqrt{\frac{n+r(\Sigma)}{N}}.
\end{equation}
\end{theorem}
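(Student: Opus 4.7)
The claimed bound is the main result of \cite{abdalla2022covariance}, so my plan is to reproduce their PAC-Bayesian construction. Fix a tuning parameter $\alpha > 0$ and let $\psi:\R \to \R$ be Catoni's influence function, satisfying $-\log(1 - x + x^2/2) \le \psi(x) \le \log(1 + x + x^2/2)$. For each $u \in S^{d-1}$, define the robust directional estimator
\[
\widehat{g}(u) = \frac{1}{\alpha N}\sum_{i=1}^N \psi\!\bigl(\alpha \langle X_i,u\rangle^2\bigr),
\]
and let $\widehat{\Sigma}$ be any positive semidefinite matrix minimising $\sup_{u \in S^{d-1}} |\langle Mu,u\rangle - \widehat{g}(u)|$ (a semidefinite programme with $M = \Sigma$ itself as a feasible anchor). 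Under the $L_4$--$L_2$ norm equivalence, $\widehat{g}(u)$ has sub-Gaussian concentration around $\sigma^2(u)$ with variance proxy controlled by $\E\langle X,u\rangle^4 \le \kappa^4 \|\Sigma\|_{2 \to 2}^2$ for every $u \in S^{d-1}$.

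To transfer the pointwise guarantee into a uniform bound with the correct $r(\Sigma)$ dependence (rather than the ambient dimension $d$), I would use PAC-Bayesian smoothing rather than an entropy chaining argument over $S^{d-1}$. Introduce a Gaussian prior $\pi$ on $\R^d$ whose covariance is aligned with $\Sigma$, and for each $v \in S^{d-1}$ let the posterior $\rho_v$ be the translation of $\pi$ by $v$. The Donsker-Varadhan variational formula applied to the log-MGF of Catoni's surrogate then yields, simultaneously in $v$ and with probability at least $1 - e^{-n}$,
\[
\bigl|\E_{w \sim \rho_v}\widehat{g}(w) - \E_{w \sim \rho_v}\sigma^2(w)\bigr| \;\lesssim\; \alpha \sup_w \E\langle X,w\rangle^4 \;+\; \frac{\mathrm{KL}(\rho_v \| \pi) + n}{\alpha N}.
\]
For a suitably scaled prior, $\mathrm{KL}(\rho_v \| \pi)$ is uniformly bounded for $v \in S^{d-1}$, while the smoothing bias $|\E_{w \sim \rho_v}\sigma^2(w) - \sigma^2(v)|$ (and the corresponding quantity for $\widehat{g}$) is controlled by $r(\Sigma)\|\Sigma\|_{2 \to 2}$ times the prior's scale parameter, because only eigendirections with non-negligible $\lambda_i$ contribute to the relevant trace. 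Balancing the three error contributions by optimising $\alpha$ and the scale parameter produces the stated $\|\Sigma\|_{2 \to 2}\sqrt{(r(\Sigma) + n)/N}$ rate.

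The main technical obstacle is arranging the prior so that the smoothing bias scales with $r(\Sigma)\|\Sigma\|_{2 \to 2}$ rather than $d\|\Sigma\|_{2 \to 2}$ while simultaneously keeping $\mathrm{KL}(\rho_v \| \pi)$ of order $r(\Sigma) + n$ uniformly over $v \in S^{d-1}$. The $L_4$--$L_2$ hypothesis enters decisively here: it caps $\E_{w \sim \rho_v}\E\langle X,w\rangle^4$ by $\kappa^4 \|\Sigma\|_{2 \to 2}^2$ independently of $v$, so the PAC-Bayes variance proxy does not deteriorate under Gaussian averaging -- a control that would fail for heavy-tailed distributions. Once the uniform deviation bound on $\widehat{g}$ is established, passing to $\widehat{\Sigma}$ costs only a factor of two, because $M = \Sigma$ is a feasible point of the SDP attaining the same bound, and the confidence level $1 - e^{-n}$ matches the logarithmic term built into the bound.
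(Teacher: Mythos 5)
The paper does not actually prove Theorem~\ref{abdalla_nikita_estimator}; it cites it as \cite[Theorem~1]{abdalla2022covariance} and uses it as a black box, so there is no internal argument to compare against --- you are reconstructing an external result.

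Regarding the PAC-Bayes sketch itself: it is in the right family of ideas, but as written it contains a genuine gap. You ask the prior $\pi$ to be ``aligned with $\Sigma$'' so that the smoothing bias scales as $r(\Sigma)\|\Sigma\|_{2\to2}$, yet you also assert that $\mathrm{KL}(\rho_v\|\pi)$ stays uniformly bounded over $v\in S^{d-1}$. These two requirements are in tension. For Gaussians $\pi=\mathcal{N}(0,C)$ and $\rho_v=\mathcal{N}(v,C)$ one has $\mathrm{KL}(\rho_v\|\pi)=\tfrac12 v^{T}C^{-1}v$; if $C\propto\Sigma$ this is of order $\lambda_1/\lambda_d$ in the direction of the smallest eigenvalue (not bounded), while if $C=\beta^2 I_d$ (giving the uniform bound $\mathrm{KL}\le 1/(2\beta^2)$) the smoothing bias is $\tr(C\Sigma)=\beta^2\tr(\Sigma)$. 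Optimising the three contributions $\alpha\kappa^4\lambda_1^2+\bigl(1/(2\beta^2)+n\bigr)/(\alpha N)+\beta^2\tr(\Sigma)$ over both $\alpha$ and $\beta^2$ then yields a rate of order $\lambda_1\max\bigl\{\sqrt{n/N},\,(r(\Sigma)/N)^{1/3}\bigr\}$, which is strictly worse than the target $\lambda_1\sqrt{(r(\Sigma)+n)/N}$ as soon as $N\gtrsim r(\Sigma)$, i.e.\ exactly in the regime where the bound is informative. Obtaining the additive $r(\Sigma)+\log(1/\delta)$ dependence under only an $L_4$--$L_2$ moment condition is precisely the hard part of \cite{abdalla2022covariance} and \cite{oliveira2022improved}; the step ``balancing the three error contributions produces the stated rate'' is where the real work lies and where, in the form given, the argument does not close. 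A more carefully designed, spectrum-adapted prior/posterior construction (or a different robustification of the quadratic form) is needed to recover the claimed rate.
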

Clearly, if $X_1,\ldots,X_N$ are independent copies of $X$ then $P_{E^{\perp}}X_1,\ldots,P_{E^{\perp}}X_N$ are independent copies of $P_{E^{\perp}}X$. Denote the covariance of $P_{E^\perp} X$ by $\Sigma_{E^\perp}$ and observe that for every $u\in E^{\perp}$
\begin{equation*}
    \mathbb{E}\langle X,u\rangle^2  = \mathbb{E}\langle X,P_{E^{\perp}}u\rangle^2 = \langle \Sigma_{E^{\perp}} u,u\rangle.
\end{equation*}
Hence, one may apply Theorem \ref{abdalla_nikita_estimator} to the random vector $P_{E^{\perp}}X$.

By Lemma \ref{lem:construction_E_Eperp},
\begin{equation}
\label{ineq:operatornorm_Eperp}
    \|\Sigma_{E^{\perp}}\|_{2 \to 2} =\sup_{u\in S^{d-1}}\mathbb{E}\langle P_{E^{\perp}}X,u\rangle^2 = \sup_{u\in S^{d-1}\cap E^{\perp}} \mathbb{E}\langle X,u\rangle^2 \leq \kappa_4 \lambda_{r}.
\end{equation}
Moreover, to estimate the effective rank $r(\Sigma_{E^{\perp}})$ it suffices to control $\tr(\Sigma_{E^{\perp}})$. To that end, let $v_1,\ldots,v_d$ be the orthonormal basis of the eigenvectors corresponding to the eigenvalues $\Sigma$. It is straightforward to verify that
$$
\frac{r}{2}\lambda_{r}\le \sum_{i=r/2}^{r} \lambda_i \le \sum_{i\ge r/2}\lambda_i,
$$
and that
$$
\sum_{i=1}^{r} \mathbb{E}\langle P_{E^{\perp}}X,v_i\rangle^2 \leq r \|\Sigma_{E^\perp}\|_{2 \to 2}.
$$
Therefore,
\begin{align} \label{ineq:trace_Eperp}
\tr(\Sigma_{E^{\perp}}) = & \sum_{i=1}^d \mathbb{E}\langle P_{E^{\perp}}X,v_i\rangle^2 \le r\|\Sigma_{E^{\perp}}\|_{2\rightarrow 2} +  \sum_{i>r}\mathbb{E}\langle X,P_{E^{\perp}}v_i\rangle^2  \\
&\le r\kappa_4\lambda_{r} + \sum_{i>r}\mathbb{E}\langle X,P_{E^{\perp}}v_i\rangle^2  \nonumber
\\
& \le 2\kappa_4\sum_{i\ge r/2}\lambda_i.
\end{align}
Thus, 
\begin{equation*}
\|\Sigma_{E^{\perp}}\|_{2\rightarrow 2}\sqrt{\frac{r(\Sigma_{E^{\perp}})}{N}} =  \sqrt{\|\Sigma_{E^{\perp}}\|_{2\rightarrow 2}}\sqrt{\frac{\tr(\Sigma_{E^{\perp}})}{N}} \le \kappa_4\sqrt{2\lambda_r \cdot\frac{\sum_{i\ge r/2}\lambda_i}{N}},
\end{equation*}
and it follows from Theorem \ref{abdalla_nikita_estimator} that the estimator $\widehat{\Sigma}$ construct there satisfies that with probability at least $1-e^{-n}$,
\begin{equation*}
\begin{split}
(*):=&\sup_{u\in S^{d-1}}|\langle \widehat{\Sigma}u,u\rangle - \langle \Sigma_{E^{\perp}} u,u\rangle|\\
&\leq C(\kappa) \kappa_4\left(\lambda_{r}\sqrt{\frac{n}{N}} + \sqrt{2\lambda_{r} \cdot \frac{\sum_{i\ge r/2}\lambda_i}{N}}\right).
\end{split}
\end{equation*}
Recalling that $r=\kappa_E n$,
\begin{equation*}
\begin{split}
\lambda_{r}\sqrt{\frac{n}{N}} &= \sqrt{\frac{\lambda_{r}}{N}} \sqrt{n\lambda_{r}} \le \sqrt{\frac{\lambda_{r}}{N}} \sqrt{\frac{2}{\kappa_E} \sum_{i=r/2}^{r}\lambda_i } \\
&\leq \sqrt{\frac{2}{\kappa_E}} \sqrt{\lambda_{r}}\sqrt{\frac{\sum_{i\ge r/2}\lambda_i}{N}},
\end{split}
\end{equation*}
and setting $\kappa_5 = C(\kappa)\kappa_4\sqrt{2}\max\{\kappa_E^{-1/2},1\}$, we obtain that
\begin{equation*}
(*) \leq \kappa_5\sqrt{\lambda_{r} \frac{\sum_{i\ge r/2}\lambda_i}{N}}.
\end{equation*}
The first part of Proposition \ref{prop:estimation_E_Eperp} follows by setting $\widehat{\nu}(u) = \langle\widehat{\Sigma}u,u\rangle$.

\subsection{Proof of Proposition \ref{prop:estimation_E_Eperp} (2): Estimation in $E$}
Next, consider $N$ independent copies of the vector $P_{E}X$. The problem is to estimate $\mathbb{E}\langle X,u\rangle^2$ uniformly in $E\cap S^{d-1} \equiv S_{E}$, which is a Euclidean sphere in a subspace of dimension $r$. The construction we present resembles the one from \cite{depersin2020robust} and is based on a median-of-means procedure.

Set $m=N/n$, and without loss of generality assume that $m$ is an integer. For every $u\in  S^{d-1}$,
\begin{enumerate}
\item Split $\langle P_{E}X_1,u\rangle,\ldots, \langle P_{E}X_N,u\rangle$ into disjoint blocks of indices $B_1,\ldots,B_n$, each one of cardinality $m$.
\item Set
\begin{equation*}
    \widehat{\sigma}_{B_i}(u) = \frac{1}{n}\sum_{i\in B_i}\langle P_{E}X_i,u\rangle^2,
\end{equation*}
i.e, $\widehat{\sigma}_{B_i}(u)$ is the empirical mean of $\langle P_{E}X,u\rangle^2$ on each block.
\item Let $\widehat{\nu}(u)$ be a median of the numbers $\widehat{\sigma}_{B_1}(u),\ldots,\widehat{\sigma}_{B_m}(u)$.
\end{enumerate}
The estimates on the performance of this median-of-means procedure follows the ideas from \cite[Theorem 3.2]{mendelson2021approximating}, implying that the number of blocks in which $\widehat{\sigma}_{B_i}$ behaves badly is small. To quantify this statement, let us reformulate \cite[Theorem 3.2]{mendelson2021approximating} in a slightly more general scenario. The statement and its proof are based on VC theory\footnote{For more information on VC dimension, we refer the reader to \cite{vershynin2018high,ledoux1991probability}.}. In what follows, for a function $f$ and a sample $X_1,\ldots,X_N$, the empirical mean of $f$ is denoted by $$\mathbb{P}_N(f):=\frac{1}{N}\sum_{i=1}^N f(X_i).$$
\begin{theorem} \label{thm:mendelson_VC}
There exist absolute constants $c_1$ and $c_2$ for which the following holds. Let $\mathcal{F}$ be a class of $\{0,1\}$-valued functions whose VC-dimension is at most $s$ and set
\begin{equation*}
   c_1\frac{s}{n}\log\left(\frac{n}{s}\right) \le \Delta \le \frac{1}{2}.
\end{equation*}
Then with probability at least $1-e^{-c_2\Delta n}$, for every $f \in \mathcal{F}$,
$$\mathbb{P}_n(f) \le \frac{3}{2}\mathbb{P}(f) + 2\Delta.$$
\end{theorem}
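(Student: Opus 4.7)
The plan is to prove this as a standard ratio–type (relative deviation) uniform concentration bound for a VC class. The key structural observation is that $f\in\{0,1\}$ implies $f^2=f$ and hence $\mathrm{Var}(f)\le \mathbb{E}f^2 = \mathbb{P}(f)$; this is exactly the reason the multiplicative constant $3/2$ (rather than $1+o(1)$) together with the purely \emph{additive} error $2\Delta$ is attainable, and it makes the lower bound $\Delta \ge c_1 (s/n)\log(n/s)$ natural.

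First I would symmetrise. By the Gin\'e--Zinn inequality, $\mathbb{E}\sup_{f\in\mathcal{F}}(\mathbb{P}_n f - \mathbb{P} f)$ is controlled by the Rademacher average $\mathbb{E}\sup_{f}\bigl|\frac{1}{n}\sum_{i=1}^n \varepsilon_i f(X_i)\bigr|$. Since $\mathcal{F}$ has VC dimension at most $s$, Haussler's bound gives $N(\mathcal{F},L_2(\mathbb{P}_n),\varepsilon) \le (C/\varepsilon)^{cs}$, so Dudley's entropy integral, followed by the Cauchy--Schwarz step that extracts $\sup_f \sqrt{\mathbb{P}_n f}$ as the diameter of the indexing set in $L_2(\mathbb{P}_n)$, yields
$$
\mathbb{E}\sup_{f\in\mathcal{F}}\Bigl|\tfrac{1}{n}\sum_{i=1}^n \varepsilon_i f(X_i)\Bigr| \;\le\; C\sqrt{\frac{s\log(n/s)}{n}}\;\sqrt{\sup_{f\in\mathcal{F}}\mathbb{P}_n f}.
$$
Passing from $\mathbb{P}_n f$ back to $\mathbb{P}(f)$ on the right can be done in the usual way (contraction / one extra symmetrisation round).

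Second, to upgrade from expectation to high probability I would apply Talagrand's (or Bousquet's) concentration inequality for empirical processes with envelope $1$ and variance proxy $\sup_f \mathrm{Var}(f) \le \sup_f \mathbb{P}(f)$. Combining this with the Rademacher bound above and the standard peeling argument over dyadic layers $\mathcal{F}_k = \{f : \mathbb{P}(f) \in [2^{-k-1},2^{-k})\}$, I would obtain that with probability at least $1-e^{-c_2 n\Delta}$, uniformly in $f\in\mathcal{F}$,
$$
\mathbb{P}_n f - \mathbb{P}(f) \;\le\; \tfrac{1}{2}\mathbb{P}(f) + 2\Delta,
$$
provided $\Delta \ge c_1 (s/n)\log(n/s)$ for $c_1$ large enough to absorb the numerical constants from Dudley's integral and Talagrand's inequality. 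Splitting into the two cases $\mathbb{P}(f)\ge \Delta$ (where the $\tfrac12\mathbb{P}(f)$ term dominates and gives the multiplicative $\tfrac32$) and $\mathbb{P}(f)<\Delta$ (where the $2\Delta$ term dominates) yields the claim.

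The main obstacle is the sharpness needed in the ratio-type bound: a naive deviation inequality gives only an additive error of order $\sqrt{\mathbb{P}(f)\cdot s\log(n/s)/n}$, and one has to choose $c_1$ so that this term is absorbed into $\tfrac12\mathbb{P}(f)$ on the layer $\mathbb{P}(f)\ge \Delta$, and into $\Delta$ on the layer $\mathbb{P}(f)<\Delta$. The peeling (summing the exponentially small failure probabilities $e^{-c_2 n 2^{-k}}$ over $k$ up to $\log_2(1/\Delta)$) is routine once the ratio inequality is in place, but it is the delicate part where the precise dependence on $\Delta$ is pinned down. An alternative, essentially equivalent, route is to invoke directly a Vapnik--Chervonenkis / Alexander ratio deviation inequality for VC classes, and read off the result; this is what is done in the proof of \cite[Theorem 3.2]{mendelson2021approximating} which this theorem is a reformulation of.
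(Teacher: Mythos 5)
The paper does not supply its own proof of this statement: Theorem~\ref{thm:mendelson_VC} is presented as a reformulation of \cite[Theorem 3.2]{mendelson2021approximating} and the reader is referred there. So the comparison is against that reference's argument, not an in-paper one. Your plan --- symmetrisation, Haussler's covering bound, Dudley's integral at the localized empirical diameter, Talagrand/Bousquet concentration at variance level $\sup_f\mathbb{P}(f)$, then dyadic peeling over $\{f:\mathbb{P}(f)\in[2^{-k-1},2^{-k})\}$ --- is a sound route to the ratio bound, and the bookkeeping you indicate (absorbing the fluctuation $\sqrt{\Delta\,\mathbb{P}(f)}$ into $\tfrac12\mathbb{P}(f)+2\Delta$ via AM--GM, taking $c_1$ large to swallow constants) closes it correctly.

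Two places, however, are thinner than they should be. The step you call ``passing from $\mathbb{P}_n f$ back to $\mathbb{P}(f)$'' is not a contraction step: Dudley's integral is evaluated at the \emph{random} diameter $\sup_f\sqrt{\mathbb{P}_n f}$, and converting it into a statement in terms of $\sup_f\mathbb{P}(f)$ requires a genuine self-bounding (fixed-point) argument --- something along the lines of deriving $R:=\mathbb{E}\sup_f(\mathbb{P}_n f-\mathbb{P} f)\lesssim \sqrt{s\log(n/s)/n}\cdot\sqrt{V+R}$ with $V:=\sup_f\mathbb{P}(f)$ and solving for $R$. You flag the issue but name the wrong mechanism. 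Second, when summing failure probabilities $e^{-cn2^{-k}}$ over the $\sim\log_2(1/\Delta)$ layers, you should observe that the hypothesis $\Delta n\gtrsim s\log(n/s)\gtrsim\log n$ is exactly what lets the $\log(1/\Delta)$ factor from the union bound be absorbed into the exponent; otherwise the claimed rate $e^{-c_2\Delta n}$ is not justified.

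Your closing remark is the important one: for a $\{0,1\}$-valued VC class, the Vapnik--Chervonenkis ghost-sample argument together with Sauer--Shelah gives the relative-deviation bound directly, with no need for Talagrand's inequality and no self-bounding step. This is the lighter and more elementary route, and it is the one the cited reference actually takes. Your Talagrand-plus-peeling plan buys generality (it would extend to non-Boolean, sub-Gaussian-diameter classes) at the cost of the technical care noted above.
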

\begin{remark}
In the case that interests us, it turns out that $s \sim r$ and $\Delta$ is a well-chosen absolute constant.
\end{remark}

\vskip0.4cm
\noindent To apply Theorem \ref{thm:mendelson_VC}, consider the class of functions
\begin{equation*}
     \mathcal{F}:=\left\{\mathds{1}\left(\left|\sum_{i=1}^m \langle X_i,u\rangle^2 -\sigma^2(u)\right| \ge t\right), \ u\in B_{E}, \ t\in \mathbb{R}\right\}.
\end{equation*}
Thus, each function in $\mathcal{F}$ depends on $m$ independent copies of $P_EX$- a random vector in a space of dimension $r$.

One may show that
\begin{equation} \label{ineq:bound_VC_F}
VC(\mathcal{F})\le 4({\rm dim}(E)+1) \log_2(8e) \leq 19r.
\end{equation}
Indeed, \eqref{ineq:bound_VC_F} is based on Warren's Lemma \cite{warren1968lower}, and the version used here is from \cite[Corollary 2]{depersin2020robust}:
\begin{lemma} \label{lem:Warrenslemma}
Let $\mathbb{R}_{\ell}[x_1,\ldots,x_k]$ be the ring of polynomials over $\mathbb{R}$ of degree at most $\ell$ and in the variables $x_1,\ldots,x_k$. The class of indicators
\begin{equation*}
    \mathcal{H}:=\left\{\mathds{1}(P(x_1,\ldots,x_k))\ge 0) \ : \  P(x_1,\ldots,x_k)\in \mathbb{R}_{l}[x_1,\ldots,x_k] \right\},
\end{equation*}
satisfies that $$VC(\mathcal{H}) \leq 2k\log_2(4e\ell).$$
\end{lemma}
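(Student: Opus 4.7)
The plan is to combine Warren's classical sign-pattern count with the shattering condition. In the form that is needed, Warren's theorem guarantees that $N$ polynomials in $\mathbb{R}_\ell[x_1,\ldots,x_k]$ can realize at most $(4e\ell N/k)^k$ distinct sign sequences on $\mathbb{R}^k$ whenever $N\ge k$.

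First I would translate shattering into a sign-pattern count: if $N$ sample points are shattered by $\mathcal{H}$, then as $x$ varies over $\mathbb{R}^k$ the vector of indicators $(\mathds{1}(P_i(x)\ge 0))_{i=1}^N$ must attain all $2^N$ values, where each $P_i\in\mathbb{R}_\ell[x_1,\ldots,x_k]$ is the polynomial associated to the $i$-th sample. Warren's bound then forces
\begin{equation*}
2^N\le \left(\frac{4e\ell N}{k}\right)^k,
\end{equation*}
which after taking logarithms becomes $N\le k\log_2(4e\ell)+k\log_2(N/k)$.

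The remaining step is an elementary inversion that I would carry out by contradiction: suppose $N\ge 2k\log_2(4e\ell)$, so that $N/k\ge 2\log_2(4e\ell)$. Substituting this lower bound into $k\log_2(N/k)$ reduces the inequality to the requirement $4e\ell\le 2\log_2(4e\ell)$, which is patently false for every $\ell\ge 1$ (the left-hand side exceeds $10$, the right-hand side grows only logarithmically). The assumption therefore fails, and the marginal range $N<k$ is absorbed trivially since $\log_2(4e\ell)\ge 1$, giving $VC(\mathcal{H})\le 2k\log_2(4e\ell)$.

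The only substantive ingredient is Warren's theorem itself, invoked as a black box from \cite{warren1968lower}; the reduction to a sign-pattern count and the algebraic inversion at the end are routine. The main conceptual point---and what has to be checked carefully in implementations like the use in \eqref{ineq:bound_VC_F}---is just that the indicator defining each function in the class genuinely depends on $(x_1,\ldots,x_k)$ through a polynomial of degree at most $\ell$, so that Warren is applicable with the stated parameters.
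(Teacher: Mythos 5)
The paper does not prove Lemma~\ref{lem:Warrenslemma}; it is quoted from \cite[Corollary~2]{depersin2020robust}, which itself rests on Warren's theorem exactly as you propose, so there is no alternative in-paper argument to compare against. Your outline is the standard Goldberg--Jerrum route: attach to each of the $N$ shattered sample points a polynomial $P_i\in\mathbb{R}_\ell[x_1,\ldots,x_k]$, note that shattering forces the realized sign patterns of $(P_1,\ldots,P_N)$ to reach $2^N$, and invert Warren's count $(4e\ell N/k)^k$. Two steps, however, are asserted rather than carried out, and one of them does not work as stated.

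First, Warren's bound controls \emph{strict} sign vectors, while $\mathds{1}(P_i\ge 0)$ can equal $1$ on the zero set of $P_i$. To get $2^N$ distinct strict sign patterns one must perturb: pick $\theta_S$ realizing each labeling $S\subseteq[N]$ and replace $P_i$ with $P_i+\varepsilon$ for $0<\varepsilon<\min_{S,\,i\notin S}|P_i(\theta_S)|$; this is routine but should be said. Second, and more substantively, the claimed ``substitution'' does not by itself produce the reduction to $4e\ell\le 2\log_2(4e\ell)$. Plugging the lower bound $N/k\ge 2\log_2(4e\ell)$ into the term $k\log_2(N/k)$ makes the right-hand side of $N\le k\log_2(4e\ell)+k\log_2(N/k)$ \emph{larger}, which goes the wrong way. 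What the argument actually requires is a monotonicity observation: rearrange to $N/k-\log_2(N/k)\le\log_2(4e\ell)$ and note that $u\mapsto u-\log_2 u$ is increasing for $u\ge 1/\ln 2$; since $N/k\ge 2\log_2(4e\ell)>1/\ln 2$, evaluating at the endpoint gives $2\log_2(4e\ell)-\log_2\bigl(2\log_2(4e\ell)\bigr)\le\log_2(4e\ell)$, i.e.\ $4e\ell\le 2\log_2(4e\ell)$, which is false. Alternatively, absorb the logarithm by $\log_2 u\le u/2$ for $u\ge 4$, which turns the inequality directly into $N\le 2k\log_2(4e\ell)$. Either fix is elementary, but as written the inversion step is incomplete.
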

\noindent Now, set
\begin{equation*}
\mathcal{H}_{+}:=\left\{\mathds{1}\left(\sum_{i=1}^m \langle X_i,u\rangle^2 -\sigma^2(u) \ge t \right) \ : \  u\in B_{E}, \ t \in \mathbb{R}\right\}
\end{equation*}
and note that
$$
P(u_1,\ldots, u_{r},t):=\sum_{i=1}^m \langle X_i,u\rangle^2 -\sigma^2(u)-t
$$
is a polynomial in the variables $(u_1,\ldots,u_r,t)$ and has degree at most $2$. Therefore
$$
VC(\mathcal{H}_{+})\le 2(r+1)\log_2(8e).
$$

Using an identical argument for
\begin{equation*}
\mathcal{H}_{-}:=\left\{\mathds{1}\left(\sum_{i=1}^m \langle X_i,u\rangle^2 -\sigma^2(u) \le -t \right) \ : \   u\in B_{E}, \ t \in \mathbb{R}\right\},
\end{equation*}
we have that
$$
VC(\mathcal{H}_{-})\le 2(r+1)\log_2(8e)
.$$
The estimate \eqref{ineq:bound_VC_F} follows immediately using that $VC(\mathcal{F})\le VC(\mathcal{H}_{+})+VC(\mathcal{H}_{-})$.

Next, it is evident from Markov's inequality and the $L_4-L_2$ norm equivalence that for any $u \in B_E$,
\begin{equation*}
\mathbb{P}\left(\left|\frac{1}{m}\sum_{i=1}^m\langle X_i,u\rangle^2 -\sigma^2(u)\right| \ge \sqrt{8}\frac{\kappa^2\sigma^2(u)}{\sqrt{m}}\right) \le \frac{1}{8}.
\end{equation*}

\medskip

\noindent  Setting
\begin{equation*}
\mathcal{H}_1:=\left\{\mathds{1}\left(\left|\sum_{i=1}^m \langle X_i,u\rangle^2 -\sigma^2(u)\right| \ge \sqrt{8}\frac{\kappa^2\sigma^2(u)}{\sqrt{m}}\right) \ : \  u\in B_{E}\right\},
\end{equation*}
invoking Theorem \ref{thm:mendelson_VC} and recalling \eqref{ineq:bound_VC_F}, it follows that with probability at least $1-e^{-c_2\Delta n}$, for every $h\in \mathcal{H}_1$,
\begin{equation*}
    \mathbb{P}_n(h) \le \frac{3}{2}\mathbb{P}(h) + 2\Delta \le \frac{3}{16}+2\Delta,
\end{equation*}
provided that
$$
(*):=c_1\frac{19r\log(n/19r)}{n} \le \Delta \le \frac{1}{2}.
$$
Recall that $r = \kappa_{E}n$, and we may choose $\kappa_{E}(c_1)$ sufficiently small to ensure that $(*) \le 1/32$. Hence, setting $\Delta = 1/32$, it is evident that with probability at least $1-e^{-c_2n/32}$, for every $h\in \mathcal{H}_1$,
\begin{equation*}
    \mathbb{P}_n(h) \le \frac{1}{4}.
\end{equation*}
As $\widehat{\nu}(u)$ is a median of $\left(\frac{1}{m} \sum_{i \in B_j} \langle X_i,u\rangle^2 \right)_{i=1}^n$, then on the same event, for every $u\in E\cap S^{d-1}$,
\begin{equation*}
   \left|\widehat{\nu}(u) -\sigma^2(u) \right| \leq \sqrt{8}\kappa^2 \frac{\sigma^2(u)}{\sqrt{m}} = \sqrt{8}\kappa^2 \sigma^2(u)\sqrt{\frac{n}{N}}.
\end{equation*}
Finally re-scaling $n\rightarrow c_2n/32$ and setting $\kappa_{6}:=16\kappa^2c_2^{-1/2}$, we have that with probability $1-e^{-n}$,
\begin{equation*}
   \left|\widehat{\nu}(u) -\sigma^2(u) \right| \leq \kappa_6\sigma^2(u)\sqrt{\frac{n}{N}}.
\end{equation*}
\section{Estimation in the Euclidean Sphere}
\label{sec:Chaining}
After addressing the problem of estimating $\mathbb{E}\langle X,u\rangle^2$ when $u\in E$ or $u\in E^{\perp}$, for the subspace $E$ that was constructed using the first half of the given sample,
let us turn to the general case, in which $u$ is an arbitrary point in $S^{d-1}$. Consider the decomposition $u=P_Eu+P_{E^{\perp}}u$ and the expansion
\begin{equation*}
\mathbb{E}\langle X,u\rangle^2 = \mathbb{E}\langle X,P_Eu\rangle^2 + \mathbb{E}\langle X,P_{E^{\perp}}\rangle^2 + 2\mathbb{E}\langle X,P_Eu\rangle\langle X,P_{E^{\perp}}u\rangle.
\end{equation*}
Clearly, both $\mathbb{E}\langle X,P_Eu\rangle^2$ and $\mathbb{E}\langle X,P_Eu\rangle^2$ can be estimated accurately, thanks to Proposition \ref{prop:estimation_E_Eperp}. All that remains is to answer the following question: How one may estimate the correlation term $\mathbb{E}\langle X,P_Eu\rangle\langle X,P_{E^{\perp}}u\rangle$ with a direction-dependent accuracy? This question is answered in Theorem \ref{prop:Estimation_correlation_term}. To that end, recall Lemma \ref{lem:construction_E_Eperp} and Proposition \ref{prop:estimation_E_Eperp}, and assume that the events from both claims hold.
\begin{theorem} \label{prop:Estimation_correlation_term}
There are constants $c$ and $\kappa_{7}(\kappa,\kappa_E)$ for which the following holds. For every $n\ge 1$ there is a function $\widehat{\nu}(u):S^{d-1}\times (\mathbb{R}^d)^N\rightarrow \mathbb{R}_{+}$ satisfying that with probability at least $1-e^{-cn}$ with respect to the $N$-product measure endowed by $X$, for every $u\in S^{d-1}$
\begin{equation*}
|\widehat{\nu}(u) - \mathbb{E}\langle X,P_{E}u\rangle\langle X,P_{E^{\perp}}u\rangle| \le \kappa_{7}\left(\max\left\{\sigma(u),\sqrt{\lambda_{ r }}\right\}\sqrt{\frac{\sum_{i\ge r/2}\lambda_i}{N}}\right).
\end{equation*}
\end{theorem}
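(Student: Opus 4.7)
The plan is to define $\widehat\nu(u)$ as a median-of-means estimator on the bilinear variables $\langle X_i, P_E u\rangle\langle X_i, P_{E^\perp} u\rangle$, applied to the second half of the sample (the first half having constructed $E$ via Lemma \ref{lem:construction_E_Eperp}). Splitting into $B \sim n$ disjoint blocks of size $m \sim N/n$, computing the block averages, and taking their median gives a standard pointwise MoM estimator. The variance of each bilinear summand is $\le \kappa^4 \sigma^2(P_E u)\sigma^2(P_{E^\perp} u)$ by Cauchy--Schwarz and the $L_4$--$L_2$ equivalence. The triangle inequality in $L_2(X)$ together with Lemma \ref{lem:construction_E_Eperp} yields $\sigma(P_E u)\le \sigma(u)+\sqrt{\kappa_4\lambda_r}$ and $\sigma(P_{E^\perp}u)\le \sqrt{\kappa_4\lambda_r}$, so the product is $\lesssim \max\{\sigma(u),\sqrt{\lambda_r}\}\sqrt{\lambda_r}$. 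Since $r=\kappa_E n$ implies $n\lambda_r\le(2/\kappa_E)\sum_{i\ge r/2}\lambda_i$, the standard pointwise MoM analysis produces the target rate for each fixed $u$ with confidence $1-e^{-cn}$.

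For the uniform bound, I would decouple $a:=P_E u$ and $b:=P_{E^\perp} u$. Since $\dim E = r = \kappa_E n$, $B_E$ admits an $\eta$-net $\mathcal N_E$ of cardinality at most $e^{Cn}$, and a union bound over $\mathcal N_E$ is affordable once the per-point MoM confidence is strengthened by taking $B$ to be a sufficiently large absolute multiple of $n$. For each fixed $a\in\mathcal N_E$, the map $b\mapsto \mathbb E[\langle X,a\rangle\langle X,b\rangle]$ is linear in $b$, so the task reduces to Euclidean mean estimation of the vector $\mu_a:=\mathbb E[\langle X,a\rangle P_{E^\perp} X]\in E^\perp$. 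The random vector $\xi^{(a)}:=\langle X,a\rangle P_{E^\perp} X$ has covariance of operator norm $\lesssim \sigma^2(a)\lambda_r$ and trace $\lesssim \sigma^2(a)\sum_{i\ge r/2}\lambda_i$, by the same computation as in \eqref{ineq:operatornorm_Eperp}--\eqref{ineq:trace_Eperp}. A vector-valued mean estimator in the spirit of Theorem \ref{abdalla_nikita_estimator} then controls $\|\widehat\mu_a-\mu_a\|_2$ by $\sigma(a)\sqrt{\sum_{i\ge r/2}\lambda_i/N}$, and $\sigma(a)\lesssim \max\{\sigma(u),\sqrt{\lambda_r}\}$ gives the desired rate at each net point.

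The remaining step is chaining to directions whose $E$-projection lies off the net. Given $a\notin\mathcal N_E$, let $a'\in\mathcal N_E$ be a nearest net point and bound separately the deterministic increment $|\mathbb E[\langle X,a-a'\rangle\langle X,b\rangle]|\le \sigma(a-a')\sigma(b)\le \eta\sqrt{\lambda_1\kappa_4\lambda_r}$, which is absorbed by choosing $\eta$ small enough, and the stochastic fluctuation of the MoM statistic between $a$ and $a'$. The stochastic piece is handled in the style of Section \ref{sec:Estimation_Orthogonal}: each block functional $(a,b)\mapsto \frac{1}{m}\sum_{i\in B_j}\langle X_i,a\rangle\langle X_i,b\rangle-\mathbb E[\cdot]$ is polynomial of degree at most two in the coordinates of $a$ restricted to $E$, so Warren's Lemma (Lemma \ref{lem:Warrenslemma}) bounds the VC dimension of the relevant deviation classes by $O(r)=O(n)$, matching the entropy of $\mathcal N_E$ at the exponential scale $e^{-cn}$.

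The main obstacle is uniformity over $b\in B_{E^\perp}$, since $\dim E^\perp$ is not controlled and a net on $B_{E^\perp}$ is not affordable. The remedy is to replace nets on $B_{E^\perp}$ by a Rademacher chaining estimate governed by the ellipsoidal structure of $\Sigma_{E^\perp}$, exactly as in \eqref{ineq_Rademacher_Complexity_ellipsoid}: this supplies the complexity $\sqrt{\sum_{i\ge r/2}\lambda_i/N}$ at the correct scale. The total fluctuation then splits into a direction-dependent term scaling like $\sigma(u)\sqrt{\lambda_r n/N}$ and a global term scaling like $\sqrt{\lambda_r\sum_{i\ge r/2}\lambda_i/N}$, both of which are absorbed into $\max\{\sigma(u),\sqrt{\lambda_r}\}\sqrt{\sum_{i\ge r/2}\lambda_i/N}$, as required.
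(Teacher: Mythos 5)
Your overall decomposition (median--of--means on the bilinear variables, reducing the $E^\perp$--uniformity to a vector mean estimation problem, and splitting the proof of uniformity over $a \in B_E$ into a net piece and a fluctuation piece) superficially resembles the paper's architecture, but the single--scale net on $B_E$ is a genuine gap, and the VC step does not apply where you invoke it.

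The fatal step is the treatment of the deterministic increment. You take an $\eta$--net $\mathcal N_E$ of $B_E$ in Euclidean metric and bound
\begin{equation*}
\bigl|\E\langle X,a-a'\rangle\langle X,b\rangle\bigr|\le \sigma(a-a')\,\sigma(b)\le \eta\sqrt{\lambda_1\kappa_4\lambda_r},
\end{equation*}
claiming this is ``absorbed by choosing $\eta$ small enough.'' But the union bound over $\mathcal N_E$ at confidence $e^{-cn}$ forces $|\mathcal N_E|\le e^{Cn}$, and since $\dim E = r = \kappa_E n$ one has $|\mathcal N_E|\ge (c/\eta)^r$, so $\eta\ge c'e^{-C/\kappa_E}$ is bounded below by an \emph{absolute constant}. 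The approximation error is therefore $\gtrsim\sqrt{\lambda_1\lambda_r}$, while the target is $\max\{\sigma(u),\sqrt{\lambda_r}\}\sqrt{\sum_{i\ge r/2}\lambda_i/N}$. For $u\in E^\perp$ this demands $\sqrt{\lambda_1}\lesssim\sqrt{\sum_{i\ge r/2}\lambda_i/N}$, i.e.\ $N\lesssim r(\Sigma)$ --- precisely the regime in which any covariance estimator is trivial. So a single--scale Euclidean net on $B_E$ cannot work, and this failure is exactly what the paper's use of Talagrand's majorizing measure theorem (Theorem \ref{thm_TalagrandMM_Appendix}) is designed to repair: the admissible sequence $(\widehat T_s)$ is built in the (empirical) $L_2(X)$ metric, adapted to the ellipsoidal geometry of $\Sigma^{1/2}B_E$, so that Lemma \ref{lem:application_talagrand_MM} gives $\sum_{s\ge s_0+1}2^{s/2}\|\Delta_sP_Eu\|_{L_2}\lesssim\sqrt{\lambda_r n}$ and hence $\|\pi_{s_0}P_Eu-P_Eu\|_{L_2}\lesssim\sqrt{\lambda_r}$, \emph{without} any $\lambda_1$ factor, while still keeping $|T_{s_0}|\sim 2^n$. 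The subsequent levels $s_0+1\le s\le s_1$ and the tail beyond $s_1$ are then estimated separately with exponentially improving confidence $e^{-c2^s}$; the summed error is controlled by the $\gamma_2$ functional, not by a single resolution parameter. Nothing in your argument supplies this multiscale structure.

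Your appeal to Warren's lemma is also out of place in this mixed setting. The VC argument in Section \ref{sec:Estimation_Orthogonal} bounds the deviation class by $O(r)$ because only the low--dimensional variable $u\in B_E$ (and a scalar threshold $t$) appears as a polynomial variable. Here the block functional involves both $a\in B_E$ and $b\in B_{E^\perp}$, and $\dim E^\perp$ is unbounded; the VC dimension of the corresponding polynomial class scales with $d$, not with $r$. The reduction to a vector mean $\mu_a=\E[\langle X,a\rangle P_{E^\perp}X]$ is a correct move, but the ensuing $\ell_2$ deviations are vector--valued and do not fall under a polynomial--indicator VC bound. The paper's resolution is Theorem \ref{thm:uniform_MOM}: for each fixed chain element $z$, uniformity over $b\in B_{E^\perp}$ is obtained via Rademacher complexity and Sudakov entropy (the $\eta_1,\eta_2$ conditions), and uniformity over $z$ is a plain union bound over the finite partitions $T_s$, not a VC argument. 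Your proposal as written does not recover either ingredient.
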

The argument is based on \emph{generic chaining} and Talagrand's majorizing measure theorem. 

\subsection{Construction of an Admissible Sequence}
Recall that $(E,d_{L_2})$ is a metric space, and that the metric $d_{L_2}$ is endowed by the norm $\|t\|_{L_2}:=\mathbb{E}\langle X,t\rangle^2$. Recall that $B_E = B_2^d\cap E$ and denote the $L_2$ diameter of a set $T\subset B_{E}$ by $\Delta_{L_2}(T):=\Delta(T)$.

In what follows, we consider a sequence of partitions $(T_{s})_{s\ge 0}$ of $B_{E}$ that is increasing: every $T \in T_{s+1}$ is contained in some $T^\prime \in T_{s}$.
\begin{definition}
An admissible sequence of $(B_{E},d_{L_2})$ is an increasing sequence of partitions of $B_{E}$, $\{T_{s}:s\ge 0\}$, satisfying that for every integer $s\ge 1$, $|T_{s}|\le 2^{2^{s}}$ and $|T_0|=1$. For an integer $s_0\ge 0$, set
\begin{equation*}
\gamma_{2,s_0}(B_{E},d_{L_2}) = \inf \sup_{t\in B_{E}}\sum_{s\ge s_0}2^{s/2}\Delta(T_s(t)),
\end{equation*}
where the infimum is taken with respect to all admissible sequences of $B_{E}$ and $T_{s}(t) \in T_{s}$ is the unique subset that contains $t$.
\end{definition}
A key component in the proof of Theorem \ref{prop:Estimation_correlation_term} is the Talagrand's majorizing measure theorem (see \cite[Theorem B3.3]{talagrand2014upper}). To formulate the result, let $G$ be the centered gaussian vector whose covariance is $\Sigma$, set $(G_{t})_{t\in B_E}:= \langle G,t\rangle$ and put $S:=\mathbb{E}\sup_{t\in B_{E}}G_{t}$.
\begin{theorem} \label{thm_TalagrandMM_Appendix}
Set $L>0$. For each $k\ge 1$ consider $\delta_k>0$ for which
\begin{equation*}
    \forall t \in B_{E}, \quad \mathbb{E}\sup_{s\in B_{E}:d_{L_2}(t,s)\le \delta_k}|G_t-G_s|\le 2^{-k}S,
\end{equation*}
and let $s_k$ be an integer that satisfies
\begin{equation*}
    LS2^{-s_k/2 -k} \le \delta_k.
\end{equation*}
Then there is a constant $L_1(L)$ and an admissible sequence $(T_{s})_{s \geq s_k}$ of $B_{E}$ for which
\begin{equation} \label{eq_TalagrandMM_Appendix-0}
    \forall k\ge 1, \quad  \sup_{t\in B_{E}}\sum_{s\ge s_k}2^{s/2}\Delta(T_{s}(t)) \le L_1 S 2^{-k}.
\end{equation}
\end{theorem}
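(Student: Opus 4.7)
The statement is a localized, partial-sum version of Talagrand's majorizing measure theorem: the classical version controls the full chain sum $\sum_{s\ge 0}2^{s/2}\Delta(T_s(t))$ by $S=\mathbb{E}\sup_{t\in B_E}G_t$, whereas here one needs the tail from scale $s_k$ onwards to be controlled by the localized Gaussian supremum $2^{-k}S$.

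I would proceed in three steps. First, apply the classical majorizing measure theorem locally. For every $k\ge 1$ and every $t_0\in B_E$, the hypothesis on $\delta_k$ gives
\begin{equation*}
\mathbb{E}\sup\{|G_s-G_{t_0}|:s\in B_E,\, d_{L_2}(s,t_0)\le\delta_k\}\le 2^{-k}S,
\end{equation*}
so the classical MM theorem yields, on each such $L_2$-ball, an admissible sequence whose $\gamma_2$-functional is at most $L_0\cdot 2^{-k}S$ for a universal constant $L_0$.

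Second, stitch these local sequences into one global admissible sequence $(T_s)_{s\ge 0}$ so that at scale $s_k$ the set $B_E$ is partitioned into cells of $L_2$-diameter at most a constant multiple of $\delta_k$, and for $s\in[s_k,s_{k-1})$ each cell is refined using the corresponding local MM sequence. Maintaining the admissibility budget $|T_{s_k}|\le 2^{2^{s_k}}$ is exactly where the hypothesis $LS\cdot 2^{-s_k/2-k}\le\delta_k$ is used: a Sudakov-type entropy estimate bounds the $\delta_k$-covering number of $B_E$ in terms of $S/\delta_k$, and the prescribed threshold on $s_k$ makes this covering number fit within $2^{2^{s_k}}$.

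Third, verify the tail estimate. Fix $t\in B_E$ and $k\ge 1$ and decompose
\begin{equation*}
\sum_{s\ge s_k}2^{s/2}\Delta(T_s(t))=\sum_{k'\ge k}\sum_{s_{k'}\le s<s_{k'-1}}2^{s/2}\Delta(T_s(t)).
\end{equation*}
Each inner sum is controlled by the $\gamma_2$-functional of the local MM sequence at scale $\delta_{k'}$, which is at most $L_0\cdot 2^{-k'}S$. Summing the resulting geometric series in $k'$ gives the required bound $L_1(L)\, S\, 2^{-k}$. The main obstacle is the gluing step, where admissibility must be preserved across all scales simultaneously while the cells at scale $s_k$ are forced to have diameter at most $\delta_k$ so that the local MM sequences can be inserted. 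The precise relation $LS\cdot 2^{-s_k/2-k}\le\delta_k$ in the hypothesis is exactly the entropy threshold that permits this: weakening it would require more $\delta_k$-cells at scale $s_k$ than the admissibility budget $2^{2^{s_k}}$ can hold.
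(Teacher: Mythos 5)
The paper does not prove this theorem; it cites it directly as \cite[Theorem~B.3.3]{talagrand2014upper}. So the comparison must be against Talagrand's own argument, and against internal consistency of your sketch.

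Your Step~2 contains a genuine gap. You claim that the hypothesis $LS\,2^{-s_k/2-k}\le\delta_k$ together with Sudakov's minoration makes a $\delta_k$-net of $B_E$ fit within the admissibility budget $2^{2^{s_k}}$. Sudakov gives
\begin{equation*}
\log N(B_E,d_{L_2},\delta_k)\;\le\;C\left(\frac{S}{\delta_k}\right)^2\;\le\;\frac{C}{L^2}\,2^{s_k+2k}\;=\;\frac{C}{L^2}\,4^k\cdot 2^{s_k},
\end{equation*}
whereas admissibility at scale $s_k$ requires $\log N\le 2^{s_k}\log 2$. The Sudakov estimate overshoots by a factor $4^k$ in the exponent, so for every fixed $L$ the covering argument fails once $k$ is large; the extra $2^{-k}$ in the hypothesis makes the constraint on $s_k$ \emph{weaker}, not stronger, precisely because the theorem is supposed to hold without inflating $s_k$. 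In other words, the theorem as stated cannot be reached by a Sudakov net at scale $\delta_k$ followed by local majorizing measure on each cell.

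The point is that Talagrand's theorem is proved by a global, top-down construction of partitions (via growth functionals), in which the diameters $\Delta(T_s(t))$ and the local Gaussian widths $\mathbb{E}\sup_{T_s(t)}G$ shrink coherently along the tree; the tail-sum bound $\sum_{s\ge s_k}2^{s/2}\Delta(T_s(t))\lesssim \mathbb{E}\sup_{T_{s_k}(t)}G$ is a by-product of that construction, not of gluing independent local nets. Your Steps~1 and 3 are sound in spirit (local MM on $\delta_k$-balls, geometric summation over $k'\ge k$), but Step~2 needs the Talagrand partition machinery rather than a Sudakov covering to supply the cells at level $s_k$. A minor additional slip: your refinement interval should be $[s_k,s_{k+1})$ rather than $[s_k,s_{k-1})$, since the $s_k$ are (weakly) increasing in $k$.
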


\begin{remark}
It should be stressed that Theorem \ref{thm_TalagrandMM_Appendix} is constructive: if one has access to the distances $d_{L_2}(s,t)$ for every $s,t \in B_E$, there is a procedure whose output is the admissible sequence that satisfies \eqref{eq_TalagrandMM_Appendix-0}. However, in our case the metric $d_{L_2}$ is not known. What saves the day is Proposition \ref{prop:estimation_E_Eperp}: one can estimate $\mathbb{E}\langle X,u\rangle^2$ uniformly over all $u\in B_{E}$ and construct a metric $\widehat{d}_{L_2}$ for which
$$
c^{-1}(\kappa)\widehat{d}_{L_2}(t,s)\le d_{L_2}(t,s)\le c(\kappa)\widehat{d}_{L_2}(t,s)
$$
for some constant $c(\kappa)$. Hence it is possible to construct an admissible sequence $\widehat{T}_s$ of $(B_{E},\widehat{d}_{L_2})$ for which
\begin{equation} \label{eq:good-admissible-1}
\sup_{t\in B_{E}}\sum_{s\ge s_k}2^{s/2}\Delta(\widehat{T}_{s}(t)) \le C(\kappa) 2^{-k}S.
\end{equation}
\end{remark}

Now that we have an admissible sequence at our disposal, let $\pi_{s}t$ be the nearest point to $t$ in $\widehat{T}_{s}$ with respect to $\widehat{d}_{L_2}$, and set $\Delta_{s}t = \pi_{s}t-\pi_{s-1}t$. The sequence of partitions $\widehat{T}_{s}$ is increasing, and thus for every $t\in B_{E}$, both $\pi_{s}t$ and $\pi_{s-1}t$ belong to $T_{s-1}$. By re-indexing the sum, inequality \eqref{eq:good-admissible-1} and the fact that $\|\Delta_s t\|_{L_2} \sim \|\Delta_s t\|_{\widehat{L}_2}$ imply that
\begin{equation}
\label{ineq:generic_chaining_bound_with_widehatT_s_and_L_2}
\sup_{t\in B_{E}}\sum_{s\ge s_k}2^{s/2}\|\Delta_st\|_{L_2} \leq C_1(\kappa)2^{-k}S
\end{equation}
for a well-chosen constant $C_1(\kappa)$.

Next, one may decompose $P_{E}u$ along the chain $\widehat{T}_{s}$ by writing
\begin{equation*}
    P_E u = \sum_{s=s_0+1}^{s_1} \Delta_s P_Eu + \pi_{s_0}P_Eu + (P_E u -\pi_{s_1}P_Eu)
\end{equation*}
for integers $s_0,s_1$ that are defined by
\begin{equation}
\label{def:choice_s0_s1}
s_0 = \widetilde{\kappa}_0\log_2n \ \ \ {\rm  and} \ \ \  s_1= \lceil\log_2N \rceil,
\end{equation}
and $\widetilde{\kappa}_0$ that is a constant to be specified in what follows.

Note that
\begin{equation}
\begin{split}
&\mathbb{E}\langle X,P_{E}u\rangle\langle X,P_{E^{\perp}}u\rangle = \left(\sum_{s=s_0+1}^{s_1} \mathbb{E}\langle \Delta_s P_Eu,X\rangle \langle X,P_{E^{\perp}}u\rangle \right) \\
& + \mathbb{E}\langle X,\pi_{s_0}P_Eu\rangle\langle X,P_{E^{\perp}}u\rangle
+ \mathbb{E}\langle X,P_E u -\pi_{s_1}P_Eu\rangle\langle X,P_{E^{\perp}}u\rangle\\
&:= (I)+(II)+(III),
\end{split}
\end{equation}
and the proof of Theorem \ref{prop:Estimation_correlation_term} follows by controlling these three terms.

\begin{remark}
It is important to keep in mind that there is access to each term in the decomposition of $P_Eu$ along the chain: the admissible sequence $(\widehat{T}_s)$ is constructed from the given data, as is the subspace $E$, $\Delta_s P_E u$, $\pi_{s_0} P_E u$ and $\pi_{s_1}P_E u$. Hence, one may invoke mean estimation procedures and obtain sharp estimates on $(I)$, $(II)$ and $(III)$.
\end{remark}

We also require the following lemma. Recall that $r=\kappa_E n$ is the dimension of the subspace $E$.
\begin{lemma} \label{lem:application_talagrand_MM}
There exists a constant $\kappa_{8}(\kappa_E,\kappa)$ for which
\begin{equation}
\label{ineq:application_talagrand_MM}
\sup_{t\in B_{E}}\sum_{s\ge s_0+1} 2^{s/2}\|\Delta_{s} t\|_{L_2}\le \kappa_{8}\sqrt{\lambda_{r}} \sqrt{n}.
\end{equation}
\end{lemma}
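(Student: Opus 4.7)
The plan is to apply Theorem \ref{thm_TalagrandMM_Appendix} to the gaussian process $G_t = \langle G, t\rangle$, $G \sim \mathcal{N}(0, \Sigma)$, indexed by $B_E$ in the metric $d_{L_2}$, and then to tune the truncation index $k$ so that the resulting tail bound matches the target $\sqrt{\lambda_r n}$. The decisive observation is that the low dimension $r = \kappa_E n$ of $E$ freezes the starting level $s_k$ at $\log_2 n + O(1)$ independently of $k$, giving complete freedom to choose $k$ based on the size of $S := \ell^*(B_E, L_2) = \mathbb{E}\sup_{t\in B_E}\langle G,t\rangle$.

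The first step is to verify the local oscillation hypothesis of Theorem \ref{thm_TalagrandMM_Appendix}. For $t \in B_E$ and $\delta > 0$, the set $\{s \in B_E : d_{L_2}(t,s)\le\delta\}$ lies in the $r$-dimensional subspace $E$, so its $\Sigma^{1/2}$-image is contained in an $\ell_2$-ball of radius $\delta$ inside an $r$-dimensional subspace of $\mathbb{R}^d$. A standard volumetric covering yields $N(\,\cdot\,, L_2,\epsilon)\le (3\delta/\epsilon)^r$, and Dudley's inequality gives
\[
\mathbb{E}\sup_{s\in B_E,\, d_{L_2}(t,s)\le\delta}|G_t-G_s|\le C_0\sqrt{r}\,\delta.
\]
The choice $\delta_k := 2^{-k}S/(C_0\sqrt{r})$ therefore satisfies the hypothesis, and solving $LS\cdot 2^{-s_k/2-k}\le\delta_k$ for the minimal $s_k$ gives $s_k = 2\log_2(C_0 L\sqrt{r}) = \log_2 r + O(1)$. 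Crucially the $k$-factors cancel, so $s_k$ is independent of $k$; since $r = \kappa_E n$, one gets $s_k\le s_0+1 = \widetilde{\kappa}_0\log_2 n + 1$ as soon as $\widetilde{\kappa}_0$ is chosen sufficiently large in terms of $L, C_0$ and $\kappa_E$.

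With $s_k\le s_0+1$ secured for every $k\ge 1$, the admissible sequence $\widehat T_s$ set up earlier (which satisfies inequality \eqref{ineq:generic_chaining_bound_with_widehatT_s_and_L_2} thanks to the equivalence of $\widehat d_{L_2}$ and $d_{L_2}$) yields
\[
\sup_{t\in B_E}\sum_{s\ge s_0+1}2^{s/2}\|\Delta_s t\|_{L_2}\;\le\;\sup_{t\in B_E}\sum_{s\ge s_k}2^{s/2}\|\Delta_s t\|_{L_2}\;\le\; C_1(\kappa)\,S\,2^{-k}.
\]
Because $S\le \sqrt{\tr(P_E\Sigma P_E)}\le\sqrt{\tr\Sigma}<\infty$, we may pick the smallest $k\ge 1$ with $C_1(\kappa) S\,2^{-k}\le\sqrt{\lambda_r n}$, which produces a bound of size $\le 2C_1(\kappa)\sqrt{\lambda_r n}$. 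This delivers Lemma \ref{lem:application_talagrand_MM} with a constant $\kappa_8$ depending only on $\kappa_E$ and $\kappa$.

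The main subtlety lies in the risk that forcing the tail bound $L_1 S 2^{-k}$ to be small might drive $s_k$ far past the prescribed $s_0$. What rescues the argument is the linearity in $\delta$ of the local gaussian oscillation --- a direct consequence of the low dimensionality of $E$ --- which causes the $2^{-k}$ factor to cancel inside the equation defining $s_k$. This invariance is the single structural fact on which the whole argument rests; without it, the tuning of $k$ would inevitably conflict with the constraint $s_k\le s_0+1$.
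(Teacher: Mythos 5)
Your proposal follows the same two-step strategy as the paper — verify the local Gaussian oscillation bound $\mathbb{E}\sup_{d_{L_2}(t,s)\le\delta}|G_t-G_s|\lesssim\sqrt{r}\,\delta$ using the low dimension of $E$, and then feed this into Theorem~\ref{thm_TalagrandMM_Appendix} to get the tail-sum control — and your final choice of $k$ yields $\delta_k = 2^{-k}S/(C_0\sqrt{r})\sim\sqrt{\lambda_r}$, which is exactly the $\delta_{k^*}=\sqrt{\lambda_r}$ that the paper picks. So the two arguments are essentially the same.

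However, there is a logical imprecision worth flagging. You observe that with $\delta_k=2^{-k}S/(C_0\sqrt{r})$ the minimal admissible $s_k$ is constant in $k$, and you then read Theorem~\ref{thm_TalagrandMM_Appendix} as asserting $\sup_{t\in B_E}\sum_{s\ge s_0+1}2^{s/2}\|\Delta_s t\|_{L_2}\le C_1(\kappa)S\,2^{-k}$ \emph{simultaneously for every} $k\ge 1$. Taken literally, letting $k\to\infty$ would force that sum to vanish, which is impossible for the continuum $B_E$ (indeed it must be $\gtrsim S$). The hidden consistency requirement of the majorizing-measure construction is that $s_k$ grow with $k$; choosing all $s_k$ equal to the minimum is not a legitimate input. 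The paper avoids this pitfall by fixing a \emph{single} index $k^*$ (the largest $k$ with $2^{-k}S\ge 2\sqrt{\lambda_r}\sqrt{r}$), checking the hypotheses only there, and reading off the conclusion at that one $k^*$. Your last step — picking ``the smallest $k$ with $C_1 S2^{-k}\le\sqrt{\lambda_r n}$'' — effectively does the same thing, so the final estimate is correct, but the intermediate ``for every $k\ge 1$'' claim should be dropped: you should verify the two hypotheses of Theorem~\ref{thm_TalagrandMM_Appendix} only at the single chosen $k$, exactly as the paper does. Also, the last display slightly mangles the constant (the bound you obtain is $\le\sqrt{\lambda_r n}$, not $\le 2C_1\sqrt{\lambda_r n}$), but that is cosmetic.
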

\noindent We postpone the proof of Lemma \ref{lem:application_talagrand_MM} to the end of this section.

\subsection{Estimating the Term II}
To simplify notation, set $\pi_{s_0}P_Eu = z$. Thus, $z \in \widehat{T}_{s_0}$, ensuring that there are at most $2^{2^{s_0}}$ choices of $z$. Then, a bound on (II) follows by fixing a vector $z$ and estimating (with the necessary high probability) $\mathbb{E}\langle X,z\rangle\langle X,P_{E^{\perp}}u\rangle$ uniformly over all $u\in E^{\perp}\cap S^{d-1}$. Invoking the union bound over all possible choices of $z$ yields the wanted bound.

Let $\mathcal{N}(\mathcal{F},\|\cdot\|_{L_2},\varepsilon)$ be the $L_2$-covering numbers of $\mathcal{F}$, namely the minimal number of open balls of radius $\varepsilon>0$ with respect to the $\|\cdot\|_{L_2}$ needed to cover the set $\mathcal{F}$. Our starting point is \cite[Theorem 2]{lugosi2019near}:

\begin{theorem} \label{thm:uniform_MOM}
Consider a class of real-valued functions $\mathcal{F}$ on $\mathbb{R}^d$, and for $m >0$ let
 \begin{equation*}
    p_m(\eta):=\sup_{f\in \mathcal{F}}\mathbb{P}\left(\left|\frac{1}{m}\sum_{i=1}^mf(X_i)-\mathbb{E}f\right|\ge \eta\right).
\end{equation*}
Let $c_1,c_2$ and $c_3$ be well-chosen absolute constants and set $\eta_0,\eta_1,\eta_2>c_1\eta_1/\sqrt{m}$ that satisfy the following:
\begin{enumerate}
    \item $p_m(\eta_0) \le 0.05$.
    \item $\log \mathcal{N}(\mathcal{F},\|.\|_{L_2},\eta_1) \le c_2 n\log(e/p_m(\eta_0))$.
    \item $\mathbb{E}\sup_{w\in \overline{W}}|\sum_{i=1}^N\varepsilon_iw(X_i)|\le c_3\eta_2N$,
\end{enumerate}
where $W:= \mathcal{F}-\mathcal{F}$ and  $\overline{W}:=\{w-\mathbb{E}w:w\in W\}$.

Then, there are an absolute constant $c$ and an estimator $\widehat{F}:\mathcal{F} \times (\mathbb{R}^d)^N \to \mathbb{R}$ that satisfies
\begin{equation*}
\mathbb{P}\left(\forall f\in \mathcal{F}, \  | \widehat{F}(f) - \mathbb{E}f(X)| \le \eta_0 + \eta_2\right)\ge 1-e^{-cn}.
\end{equation*}
\end{theorem}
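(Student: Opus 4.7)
The natural estimator is the block median-of-means. Partition $\{1,\dots,N\}$ into $n$ disjoint blocks $B_1,\dots,B_n$ of size $m=N/n$, set $\hat\mu_j(f):=\tfrac{1}{m}\sum_{i\in B_j}f(X_i)$, and define
\[
\widehat{F}(f):=\mathrm{median}\{\hat\mu_1(f),\dots,\hat\mu_n(f)\}.
\]
The event $|\widehat{F}(f)-\mathbb{E}f|\le \eta_0+\eta_2$ is equivalent to the pair of one-sided block-counts
\[
G_\pm(f,\alpha):=\bigl|\{j:\pm(\hat\mu_j(f)-\mathbb{E}f)>\alpha\}\bigr|
\]
both being strictly less than $n/2$ at $\alpha=\eta_0+\eta_2$. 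So it suffices to show that with probability at least $1-e^{-cn}$, $G_\pm(f,\eta_0+\eta_2)<n/2$ uniformly over $f\in\mathcal{F}$.

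My plan is the standard net-plus-approximation decomposition. Fix a minimal $\eta_1$-net $\mathcal{F}_{\eta_1}$ of $(\mathcal{F},\|\cdot\|_{L_2})$; by hypothesis (2), $\log|\mathcal{F}_{\eta_1}|\le c_2 n\log(e/p_m(\eta_0))$. For $f\in\mathcal{F}$, write $f=f'+w$ with $f'\in\mathcal{F}_{\eta_1}$ and $w\in W_{\eta_1}:=W\cap\eta_1 B_{L_2}$. The elementary inclusion
\[
\{\hat\mu_j(f)-\mathbb{E}f>\eta_0+\eta_2\}\subseteq\{\hat\mu_j(f')-\mathbb{E}f'>\eta_0\}\cup\{\hat\mu_j(w)-\mathbb{E}w>\eta_2\}
\]
gives $G_+(f,\eta_0+\eta_2)\le G_+(f',\eta_0)+H_+(w,\eta_2)$ where $H_+(w,\eta_2):=|\{j:\hat\mu_j(w)-\mathbb{E}w>\eta_2\}|$, and symmetrically for $G_-$. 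Matters reduce to two events, each of probability at least $1-e^{-cn}$:
\begin{enumerate}
\item[(A)] $\max_{f'\in\mathcal{F}_{\eta_1}}G_\pm(f',\eta_0)\le n/4$;
\item[(B)] $\sup_{w\in W_{\eta_1}}H_\pm(w,\eta_2)\le n/4$.
\end{enumerate}

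Event (A) is routine. For fixed $f'$, the random variables $\mathbf{1}\{\pm(\hat\mu_j(f')-\mathbb{E}f')>\eta_0\}$ are i.i.d.\ Bernoulli with parameter at most $p_m(\eta_0)\le 1/20$, so a Chernoff (Bennett) bound yields
\[
\mathbb{P}\bigl(G_\pm(f',\eta_0)>n/4\bigr)\le\exp\bigl(-c_4 n\log(1/p_m(\eta_0))\bigr)
\]
for an absolute $c_4$. Choosing $c_2<c_4/2$ in hypothesis (2) and invoking the union bound over $\mathcal{F}_{\eta_1}$ delivers (A) with the required probability.

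Event (B) is the main obstacle. The clean entry is the sign-selection bound
\[
\eta_2\bigl(H_+(w,\eta_2)+H_-(w,\eta_2)\bigr)\le\sup_{\sigma\in\{-1,+1\}^n}\sum_{j=1}^n\sigma_j\bigl(\hat\mu_j(w)-\mathbb{E}w\bigr),
\]
which, upon rewriting $\sum_j\sigma_j(\hat\mu_j(w)-\mathbb{E}w)=m^{-1}\sum_{i=1}^N\sigma_{j(i)}(w(X_i)-\mathbb{E}w)$ (with $j(i)$ denoting the block of $i$), reduces (B) to proving that with probability $\ge 1-e^{-cn}$,
\[
\sup_{w\in W_{\eta_1},\,\sigma\in\{-1,+1\}^n}\frac{1}{m}\Bigl|\sum_{i=1}^N\sigma_{j(i)}(w(X_i)-\mathbb{E}w)\Bigr|\le\eta_2\,n/2.
\]
For each fixed $\sigma$ the Rademacher invariance $\varepsilon_i\sigma_{j(i)}\stackrel{d}{=}\varepsilon_i$ combined with classical symmetrization and hypothesis (3) give
\[
\mathbb{E}\sup_{w\in W_{\eta_1}}\Bigl|\sum_i\sigma_{j(i)}(w(X_i)-\mathbb{E}w)\Bigr|\le 2\,\mathbb{E}\sup_{w\in\overline W_{\eta_1}}\Bigl|\sum_i\varepsilon_i w(X_i)\Bigr|\le 2c_3\eta_2 N,
\]
independently of $\sigma$. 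Talagrand's inequality then controls the deviation of $\sup_w$ from its mean at Gaussian scale $\sqrt{N\|w\|_{L_2}^2\,t}\le\sqrt{N\eta_1^2\,t}$; a union bound over the $2^n$ sign patterns forces $t\gtrsim n$, and requiring the resulting deviation $\sqrt{nN}\,\eta_1$ be absorbed by the target level $\eta_2 N/8$ produces exactly the scaling hypothesis $\eta_2>c_1\eta_1/\sqrt m$. With $c_3$ small and $c_1$ large (both absolute), the left-hand side is at most $\eta_2 N/4$ with probability $\ge 1-e^{-c_6 n}$, which yields (B).

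Intersecting (A) and (B), one has $G_\pm(f,\eta_0+\eta_2)\le n/4+n/4<n/2$ uniformly in $f\in\mathcal{F}$, hence $|\widehat{F}(f)-\mathbb{E}f|\le \eta_0+\eta_2$ for every $f\in\mathcal{F}$ on an event of probability $\ge 1-e^{-cn}$. The principal difficulty is step (B), i.e.\ converting the \emph{global} empirical-process bound from hypothesis (3) into a \emph{block-count} statement uniform in $w$. The sign-selection reduction is the clean tool, and the scaling constraint $\eta_2>c_1\eta_1/\sqrt m$ is precisely what makes Talagrand's deviation term compatible with the $2^n$ union bound over sign patterns; the choice $c_2<c_4/2$ is what makes the net union bound in (A) affordable. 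These coordinated choices of $c_1,c_2,c_3$ are the ``well-chosen absolute constants'' of the statement.
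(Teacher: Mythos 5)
The paper does not prove this statement --- it is quoted verbatim as \cite[Theorem 2]{lugosi2019near} --- so the comparison is with the proof given there. Your overall skeleton (block median-of-means, decomposition $f=f'+w$ over an $\eta_1$-net, a binomial/Chernoff bound plus union bound for the net points, and a uniform control of the number of ``bad'' blocks for the increments $w$) is exactly the right one, and your step (A) is correct as written.

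The gap is in step (B), specifically in the invocation of Talagrand's concentration inequality to control the deviation of $\sup_{w\in W_{\eta_1}}\bigl|\sum_i \sigma_{j(i)}\bigl(w(X_i)-\mathbb{E}w\bigr)\bigr|$ around its mean at the ``Gaussian scale $\sqrt{N\eta_1^2 t}$''. Talagrand's inequality for empirical process suprema requires a uniform sup-norm bound on the class (or, in Adamczak-type extensions, sub-exponential envelopes), and no such assumption appears among hypotheses (1)--(3): the only control on $W=\mathcal F-\mathcal F$ is in $L_2$. In the very application this theorem is used for in the paper, $\mathcal F_z=\{\langle X,z\rangle\langle X,\theta\rangle\}$ with $X$ merely satisfying an $L_4$--$L_2$ norm equivalence, so the class is genuinely unbounded and the supremum need not concentrate sub-Gaussianly around its mean at all; even for a single unbounded $w$ the deviation of $\sum_i w(X_i)$ at confidence $e^{-Cn}$ cannot be taken to be $\sqrt{Cn\,N}\,\|w\|_{L_2}$ under second-moment assumptions alone. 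A secondary (fixable) awkwardness is that your sign-selection quantity $\sup_{w,\sigma}\sum_j\sigma_j(\hat\mu_j(w)-\mathbb{E}w)=\sup_w\sum_j|\hat\mu_j(w)-\mathbb{E}w|$ is itself an unbounded real-valued statistic, so bounded differences cannot rescue it either. The way the cited proof closes this gap is to never leave the world of \emph{counts}: dominate each indicator $\mathds{1}\{|\hat\mu_j(\bar w)|\ge\eta_2\}$ by a $(2/\eta_2)$-Lipschitz function $\phi$ vanishing at $0$ and bounded by $1$; bound $\mathbb{E}\sup_w\sum_j\phi(\hat\mu_j(\bar w))$ by Gin\'e--Zinn symmetrization over the $n$ independent blocks followed by the contraction principle, which produces $\tfrac{C}{\eta_2 m}\,\mathbb{E}\sup_{w\in\overline W}|\sum_i\varepsilon_i w(X_i)|\le C c_3 n$ from hypothesis (3), plus the ``variance'' term $n\sup_w\mathbb{P}(|\hat\mu_1(\bar w)|\ge\eta_2/2)\le 4n\eta_1^2/(m\eta_2^2)$, which is where the scaling hypothesis $\eta_2>c_1\eta_1/\sqrt m$ enters; and finally apply the bounded-differences inequality to the count itself, viewed as a function of the $n$ block-samples that changes by at most $1$ when a whole block is replaced, yielding the $1-e^{-cn}$ probability with no union bound over the $2^n$ sign patterns. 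With step (B) repaired in this way, the rest of your argument goes through.
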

\noindent Consider a fixed $z \in B_E$, and we will apply Theorem \ref{thm:uniform_MOM} to the class of functions $\mathcal{F}_z:=\{ \langle X,z\rangle \langle X,\theta\rangle: \theta\in B_{E^{\perp}} \}$. Set $m=N/n$ and note that by Chebyshev's inequality,
\begin{equation*}
p_m(\eta_0)\le \sup_{\theta \in B_{E^\perp}} \frac{1}{\eta_0^2 m} \|\langle X,z\rangle\langle X,\theta\rangle\|_{L_2}^2.
\end{equation*}
Thus, if we choose
$$
\eta_0 = C_{0}\sup_{\theta \in B_{E^{\perp}}}\frac{1}{\sqrt{m}}\|\langle X,z\rangle\langle X,\theta\rangle\|_{L_2},
$$
for a suitable choice of an absolute constant $C_{0}$, it follows that $p_m(\eta_0)<0.05$.

Next, to control $(2)$ one has to estimate the $L_2$-covering numbers of the class $\mathcal{F}_z$. Thanks to the $L_4-L_2$ norm equivalence, it is enough to control the cardinality of a maximal $\kappa^4\eta_1/\sigma(z)$-separated subset of $B_{E^{\perp}}$ with respect to the $L_2$ metric. To that end, recall that $G$ is a centered gaussian vector with covariance matrix $\Sigma$. By Sudakov's minoration inequality \cite{talagrand2014upper},

\begin{equation*}
\sqrt{\log\mathcal{N}\left(B_{E^{\perp}},\|\cdot\|_{L_2},\frac{\kappa^4\eta_1}{\sigma(z)}\right)}\frac{\kappa^4\eta_1}{\sigma(z)} \leq C \mathbb{E}\sup_{u\in S^{d-1}\cap \sqrt{\lambda_{r}}D} \langle G,t\rangle \leq C_1\sqrt{\sum_{i\ge r}\lambda_i}
\end{equation*}
for an absolute constant $C_1$.

Thus, we may choose $$\eta_1 := \frac{C_1}{c_2}\kappa^{-4}\sigma(z)\sqrt{\frac{\sum_{i\ge r}\lambda_i}{n}}$$ to ensure that $(2)$.

Finally, we proceed to verify $(3)$ by showing that there exists an absolute constant $C_{2}$ for which
\begin{equation}
\label{ineq:Rademacher_complexity_term_eta_3_final}
\begin{split}
&\mathbb{E}\sup_{\mathcal{F}-\mathcal{F}}\left|\sum_{i=1}^N \varepsilon_i (\langle X_i,z\rangle\langle X_i,\theta_{1}-\theta_{2}\rangle - \mathbb{E}\langle X_i,z\rangle\langle X_i,\theta_{1}-\theta_{2}\rangle )\right|\\
&\le C_{2}\kappa^2\sqrt{N}\sigma(z)\left(\sqrt{\lambda_{r}}+ \sqrt{\sum_{i\ge r/2}\lambda_i}\right)\le c_3\eta_2 N,
\end{split}
\end{equation}
by choosing 
$$
\eta_2 \ge \frac{C_2}{c_3}\kappa^2\sigma(z){\frac{\sqrt{\lambda_{r}}+\sqrt{\sum_{i\ge r/2}\lambda_i}}{\sqrt{N}}},
$$
and indeed one may ensure that $\eta_2>c_1\eta_1/\sqrt{m}$ because $m=N/n$.

Turning to the proof of \eqref{ineq:Rademacher_complexity_term_eta_3_final}, recall that $\kappa_4$ is the constant from Lemma \ref{lem:construction_E_Eperp} for which \eqref{ineq:construction_E_Eperp} holds. By the Cauchy-Schwarz inequality and the $L_4-L_2$ norm equivalence,
\begin{equation}
\begin{split}
\label{ineq:Rademacher_complexity_term_eta_3_part1}
&\mathbb{E}\sup_{\theta_1-\theta_2\in (B_{E^{\perp}}-B_{E^{\perp}})}\left|\sum_{i=1}^N \varepsilon_i (\langle X_i,z\rangle\langle X_i,\theta_{1}-\theta_{2}\rangle - \mathbb{E}\langle X_i,z\rangle\langle X_i,\theta_{1}-\theta_{2}\rangle )\right|\\
&\le \mathbb{E}\sup_{\theta_1-\theta_2\in (B_{E^{\perp}}-B_{E^{\perp}})}\left(\left|\sum_{i=1}^N \varepsilon_i \langle X_i,z\rangle\langle X_i,\theta_{1}-\theta_{2}\rangle\right|+ \mathbb{E}\langle X,z\rangle\langle X,\theta_{1}-\theta_{2}\rangle\left|\sum_{i=1}^N \varepsilon_i\right|\right)\\
& \le \mathbb{E}\sup_{\theta_1-\theta_2\in (B_{E^{\perp}}-B_{E^{\perp}})}\left|\sum_{i=1}^N \varepsilon_i \langle X_i,z\rangle\langle X_i,\theta_{1}-\theta_{2}\rangle\right| + 2\kappa^2\sqrt{\kappa_4}\sqrt{N}\sigma(z)\sqrt{\lambda_{r}}.
\end{split}
\end{equation}

Next, let $v_1,\ldots,v_d$ be an orthonormal basis of eigenvectors of the covariance matrix $\Sigma$. Invoking the $L_4-L_2$ norm equivalence once again, we have that
\begin{equation*}
\begin{split}
&\mathbb{E}\sup_{\theta_1-\theta_2\in (B_{E^{\perp}}-B_{E^{\perp}})}\left|\sum_{i=1}^N \varepsilon_i\langle X_i,z\rangle\langle X_i,\theta_{1}-\theta_{2}\rangle\right| \\
&\leq 2 \mathbb{E}\sup_{\theta \in B_{E^{\perp}}}\left|\left\langle \sum_{i=1}^N \varepsilon_i\langle X_i,z\rangle P_{E^{\perp}}X_i,\theta\right\rangle\right|\\
& \leq 2\mathbb{E}\|\sum_{i=1}^N \varepsilon_i\langle X_i,z\rangle P_{E^{\perp}}X_i\|_{\ell_2}\\
& \leq 2 (\mathbb{E}\|\sum_{i=1}^N \varepsilon_i\langle X,z\rangle P_{E^{\perp}}X_i\|_{\ell_2}^2)^{1/2}\\
&= 2\sqrt{N}(\mathbb{E}\langle X,z\rangle^2\|P_{E^{\perp}}X\|_{\ell_2}^2)^{1/2}\\
&= 2\sqrt{N}\left(\sum_{i=1}^d \mathbb{E}\langle X,z\rangle^2\langle P_{E^{\perp}}X,v_i\rangle^2\right)^{1/2} \\
&\le 2\kappa^2\sqrt{N}(\mathbb{E}\langle X,z\rangle^2)^{1/2}\left(\sum_{i=1}^d \mathbb{E}\langle P_{E^{\perp}}X,v_i\rangle^2\right)^{1/2}.
\end{split}
\end{equation*}
Invoking \eqref{ineq:trace_Eperp},  
\begin{equation*}
\mathbb{E}\sup_{\theta_1-\theta_2\in (B_{E^{\perp}}-B_{E^{\perp}})}\left|\sum_{i=1}^N \varepsilon_i\langle X_i,z\rangle\langle X_i,\theta_{1}-\theta_{2}\rangle\right| \leq 2\sqrt{2\kappa_4} \kappa^2 \sqrt{N}\sigma(z) \sqrt{\sum_{i\ge r/2}\lambda_i},
\end{equation*}
and \eqref{ineq:Rademacher_complexity_term_eta_3_final} holds for $C_{2}:=\sqrt{\kappa_4}(2\sqrt{2}+2)$.

Hence, it follows from Theorem \ref{thm:uniform_MOM} that there is a function $\widehat{\nu}(u):S^{d-1}\times (\mathbb{R}^d)^N\rightarrow \mathbb{R}_{+}$ satisfying that with probability at least $1-e^{-cn}$,
\begin{equation*}
\begin{split}
&|\widehat{\nu}(u)-\mathbb{E}\langle X,z\rangle\langle X,P_{E^{\perp}}u\rangle|\le\eta_0+\eta_2\\
&\leq C_{0}(\kappa)\sigma(z) \left(\sqrt{\lambda_{r}}\sqrt{\frac{n}{N}} + \sqrt{\frac{\sum_{i\ge r/2}\lambda_i}{N}}\right)\\
&\le C_0'(\kappa_0,\kappa)\sigma(z)\sqrt{\frac{\sum_{i\ge r/2}\lambda_i}{N}}.
\end{split}
\end{equation*}

Recall that $2^{s_0}=2^{\Tilde{\kappa}_0}n$ by \eqref{def:choice_s0_s1}, and set $\Tilde{\kappa}_0$ to be an absolute constant for which $\log |T_{s_0}|\le cn/2$. Since $z=\pi_{s_0}P_{E}u$, it follows from the union bound over $T_{s_0}$ that with probability at least $$1-e^{-cn}|T_{s_0}|\ge 1-e^{-cn/2},$$ for every $z\in T_{s_0}$
\begin{equation}
\label{ineq:end_termII_part1}
\begin{split}
&|\widehat{\nu}(u)-\mathbb{E}\langle X,\pi_{s_0}P_{E}u\rangle\langle X,P_{E^{\perp}}u\rangle|\\
&\le C_0'\sigma(\pi_{s_0}P_{E}u)\sqrt{\frac{\sum_{i\ge r/2}\lambda_i}{N}}.
\end{split}
\end{equation}
Next, by \eqref{ineq:application_talagrand_MM} and the choice of $s_0$,
\begin{equation}
\label{ineq:end_termII_part2}
\begin{split}
&\sigma(\pi_{s_0}P_{E}u)\le \|\pi_{s_0}P_{E}u-P_{E}u\|_{L_2} + \sigma(P_{E}u)\le \sum_{s\ge s_0+1}\|\Delta_{s}P_{E}u\|_{L_2} + \sigma(P_{E}u)\\
&\le 2^{-(s_0+1)/2}\sum_{s\ge s_0+1} 2^{s/2}\|\Delta_{s}P_{E}u\|_{L_2} + \sigma(P_{E}u)\\
&\le 2^{-(\Tilde{\kappa}_0+1)/2}\kappa_8 \sqrt{\lambda_{r}} + \sigma(P_{E}u).
\end{split}
\end{equation}
Therefore with probability at least $1-e^{-cn/2}$,
\begin{equation}
\label{ineq:end_termII_part3}
\begin{split}
&|\widehat{\nu}(u)-\mathbb{E}\langle X,\pi_{s_0}P_{E}u\rangle\langle X,P_{E^{\perp}}u\rangle|\\
&\leq C_3(\kappa,\kappa_0) \max\{\sigma(P_{E}u),\sqrt{\lambda_{r}}\}\sqrt{\frac{\sum_{i\ge r/2}\lambda_i}{N}}.
\end{split}
\end{equation}
All that is left is to show that 
\begin{equation*}
\sigma(P_{E}u) \leq \max\{\sigma(u),2\sqrt{\kappa_4}\sqrt{\lambda_{r}}\}.
\end{equation*}
Indeed, we clearly have that
\begin{equation*}
|\sigma(P_{E}u) - \sigma(P_{E^{\perp}}u)|^2\le \sigma^2(u)\le 2(\sigma^2(P_{E}u) + \sigma^2(P_{E^{\perp}}u)),
\end{equation*}
and therefore, either $\sigma(P_E u) \geq 2\sigma(P_{E^{\perp}}u)$ (and in particular $\sigma(P_Eu)$ and $\sigma(u)$ are equivalent), or
\begin{equation}
\label{ineq:end_termII_part4}
\sigma(P_Eu)< 2\sigma(P_{E^{\perp}}u)\leq 2\sqrt{\kappa_4}\sqrt{\lambda_{r}}.
\end{equation}
Combining that with \eqref{ineq:end_termII_part3}, there is a constant $\kappa'(\kappa,\kappa_0)$ for which with probability at least $1-e^{-cn/2}$,
\begin{equation}
\label{ineq:final_estimate_termII_chaining}
|\widehat{\nu}(u)-\mathbb{E}\langle X,\pi_{s_0}P_{E}u\rangle\langle X,P_{E^{\perp}}u\rangle|\leq \kappa' \max\{\sigma(u),\sqrt{\lambda_{r}}\}\sqrt{\frac{\sum_{i\ge r/2}\lambda_i}{N}}.
\end{equation}
\medskip

\subsection{Estimating the terms $(I)$ and $(III)$:}
Fix an integer $s\ge s_0+1$ and recall that for every $s$ there are at most $2^{2^s} \cdot 2^{2^{s+1}}\leq 2^{2^{s+2}}$ points of the form $\Delta_s P_E u$. Following the same path as in the previous section, invoking Theorem \ref{thm:uniform_MOM}, there is a function $\widehat{\nu}_s:S^{d-1}\times (\mathbb{R}^{d})^N \rightarrow \mathbb{R}_{+}$ for which for a fixed $z=\Delta_sP_{E}u$, with probability at least $1-2^{-c2^{s}}$, for every $v\in S^{d-1}$,
\begin{equation}
\label{ineq:estimate_each_link}
\begin{split}
&|\widehat{\nu}_s(v)-\mathbb{E}\langle X, z\rangle\langle X,P_{E^{\perp}}v\rangle|\\
&\leq \kappa' \|z\|_{L_2}\left(2^{s/2}\sqrt{\frac{\lambda_{r}}{N}} +  \sqrt{\frac{\sum_{i\ge r}\lambda_i}{N}}\right).
\end{split}
\end{equation}
By the union bound over all elements of the form $\Delta_s P_E u$ and using that $s_0+1 \leq s \leq s_1$, we obtain that the estimate \eqref{ineq:estimate_each_link} holds for every $\Delta_s P_E u$ for $s$ in that range with probability at least
\begin{equation*}
    1-\sum_{s=s_0+1}^{s_1} 2^{2^{s+1}} e^{-C2^{s}} \ge 1- 2e^{-C^\prime2^{s_0}} \geq 1 -e^{-C^{\prime \prime} n}.
\end{equation*}
In particular, \eqref{ineq:estimate_each_link} holds for every $u\in S^{d-1}$ for which $\Delta_s P_{E}u=z$, implying that with probability at least $1-e^{-C'' n}$,
\begin{equation*}
\begin{split}
&|\widehat{\nu}_s(u)-\mathbb{E}\langle X, \Delta_sP_{E}u\rangle\langle X,P_{E^{\perp}}u\rangle|\\
&\leq \kappa' \|\Delta_sP_{E}u\|_{L_2}\left(2^{s/2}\sqrt{\frac{\lambda_{r}}{N}} +  \sqrt{\frac{\sum_{i\ge r}\lambda_i}{N}}\right).
\end{split}
\end{equation*}
On that event, we apply \eqref{lem:application_talagrand_MM} to obtain that for any $u\in S^{d-1}$,
\begin{equation}
\label{rate_TermI}
\begin{split}
&\sum_{s=s_0+1}^{s=s_1}|\widehat{\nu}_s(u)-\mathbb{E}\langle \Delta_sP_{E}u,X\rangle\langle X,P_{E^{\perp}}u\rangle|\\
&\leq \kappa'\left( \sqrt{\frac{\lambda_r}{N}}\sum_{s=s_0+1}^{s_1} 2^{s/2}\|\Delta_sP_Eu\|_{L_2} + \sqrt{\frac{\sum_{i\ge r}\lambda_i}{N}} \sum_{s=s_0+1}^{s_1} \|\Delta_sP_Eu\|_{L_2}\right)
\\
&=\kappa'\left(\sqrt{\frac{\lambda_r}{N}}\sum_{s=s_0+1}^{s_1} 2^{s/2}\|\Delta_sP_Eu\|_{L_2} + \sqrt{\frac{\sum_{i\ge r}\lambda_i}{N}} 2^{-(s_0+1)/2} \sum_{s=s_0+1}^{s_1} 2^{s/2}\|\Delta_sP_Eu\|_{L_2}\right)
\\
&\le \kappa'\sum_{s=s_0+1}^{s_1} 2^{s/2}\|\Delta_sP_Eu\|_{L_2}\left(\sqrt{\frac{ \lambda_{r}}{N}} + \sqrt{\frac{\sum_{i\ge r}\lambda_i}{n N}}\right)\\
&\leq \kappa'\kappa_8 \left(\lambda_{ r} \sqrt{\frac{n}{N}} +  \sqrt{\lambda_{ r}}\sqrt{\frac{\sum_{i\ge r}\lambda_i}{N}}\right)\\
&\leq C_1(\kappa_0,\kappa)\sqrt{\lambda_{ r}}\sqrt{\frac{\sum_{i\ge r/2}\lambda_i}{N}},
\end{split}
\end{equation}
where the last inequality holds because $r=\kappa_En$ implies that $\lambda_rn \le \sum_{i\ge r/2}\lambda_i$, and the wanted estimate on (I) follows.

\medskip

Turning to $(III)$, recall that by Lemma \ref{lem:construction_E_Eperp}, for every $u\in S^{d-1}$, we have that $\mathbb{E}\langle X,P_{E^{\perp}}u\rangle^2\le \kappa_4\lambda_{r}$. Also, it follows from \eqref{def:choice_s0_s1} that $2^{-s_1/2} \le N^{-1/2}$. Thus, we apply \eqref{eq_TalagrandMM_Appendix-0} to obtain that
\begin{equation}
\label{rate_termIII}
\begin{split}
&\mathbb{E}\langle X,P_E u -\pi_{s_1}P_Eu\rangle\langle X,P_{E^{\perp}}u\rangle\le \sqrt{\kappa_4}\sqrt{\lambda_{r}} \|P_E u -\pi_{s_1}P_Eu\|_{L_2}\\
&\le \sqrt{\kappa_4}\sqrt{\lambda_{ r}}\sum_{s\ge s_1}\|\Delta_{s}P_Eu\|_{L_2} \\
&\le\sqrt{\kappa_4} \sqrt{\lambda_{ r}}2^{-s_1/2}\sum_{s\ge s_1}2^{s/2}\|\Delta_{s}P_Eu\|_{L_2}\\
&\leq \kappa_{8} \sqrt{\kappa_4} \lambda_{ r}\sqrt{\frac{n}{N}}\\
&\leq C_2(\kappa,\kappa_0)\sqrt{\lambda_{ r}}\sqrt{\frac{\sum_{i\ge r/2}\lambda_i}{N}}.
\end{split}
\end{equation}
The proof follows by setting $\kappa_{7}$ to be the dominating constant in \eqref{ineq:final_estimate_termII_chaining}, \eqref{rate_TermI} and \eqref{rate_termIII}, namely
\begin{equation*}
\kappa_{7} := 3\max\left\{\kappa', C_1, C_2 \right\}.
\end{equation*}

\subsection{Proof of Lemma \ref{lem:application_talagrand_MM}}
Let $g$ be an isotropic centred gaussian vector and set $G=\Sigma^{1/2} g \sim N(0,\Sigma)$, $G_t:=\langle G,t\rangle$, and $S:=\mathbb{E}\sup_{t\in B_{E}}\langle G,t\rangle$, where $B_{E}$ is the Euclidean unit ball in $E$.

Let $\delta>0$, set $W=\Sigma^{-1/2}\delta B_E\cap B_E $ and observe that
\begin{equation*}
\begin{split}
& \mathbb{E}\sup_{\|t-s\|_{L_2}\le \delta; \ t-s\in 2B_E}|G_t-G_s|=\mathbb{E}\sup_{\|t-s\|_{L_2}\le \delta; \ t-s\in 2B_E}|\langle G,t-s\rangle|\\
&\leq 2\mathbb{E}\sup_{v\in \Sigma^{-1/2}\delta B_E\cap B_E }\langle \Sigma^{1/2}g,v\rangle\\
&=2\mathbb{E}\sup_{v\in W}\langle g,\Sigma^{1/2}v\rangle.
\end{split}
\end{equation*}
Since $\Sigma^{1/2}W = \delta B_{E} \cap \Sigma^{1/2}B_E \subset \delta B_{E} $ and the dimension of $E$ is $r$, it is evident that
\begin{equation*}
\begin{split}
\mathbb{E}\sup_{\|t-s\|_{L_2}\le \delta; \ t-s\in 2B_E}|\langle G,t-s\rangle|&\le 2\mathbb{E}\sup_{u\in \delta B_E}\langle g,u\rangle \\
&\le 2\delta \sqrt{r}.
\end{split}
\end{equation*}

\noindent Let $\delta:= \sqrt{\lambda_{ r}}$ and put $\delta_{k^{\ast}}=\delta$, where $k^{\ast}$ is defined to be the largest integer for which
\begin{equation*}
    2^{-k^{\ast}} S\ge 2\delta \sqrt{ r},
\end{equation*}
thus the first condition of Theorem \ref{thm_TalagrandMM_Appendix} is fulfilled for $k=k^{\ast}$ and $\delta_{k^{\ast}} = \sqrt{\lambda_{r}}$.

As for the second condition, it suffices to verify that
\begin{equation}
\label{ineq:second_conditon_TalagrandMM_Appendix}
    L S2^{-k^{\ast}} 2^{-s_{k^{\ast}}/2} \le \delta_{k^{\ast}} = \sqrt{\lambda_{r}},
\end{equation}
for some well-chosen constant $L>0$. By the maximimality of $k^{\ast}$, we have that 
\begin{equation*}
S2^{-k^{\ast}}\leq 4\sqrt{\lambda_{r}}\sqrt{ r} 
\end{equation*}
and thanks to the choice $s_0$, 
\begin{equation*}
2^{-(s_0+1)/2}= 2^{-(\Tilde{\kappa}_0+1)/2} n^{-1/2}.
\end{equation*}
Recalling that $r=\kappa_En$, setting  $s_{k^{\ast}}= s_0+1$ and $$L:=\frac{2^{(\Tilde{\kappa}_0+1)/2}}{4\sqrt{\kappa_E}},$$
it follows that \eqref{ineq:second_conditon_TalagrandMM_Appendix} holds. 

Therefore, by Theorem \ref{thm_TalagrandMM_Appendix} there is an admissible sequence $(T_{s})$ of $B_{E}$ that satisfies
\begin{equation*}
    \sup_{t\in B_{E}}\sum_{s\ge s_0+1}2^{s/2}\Delta(T_s(t)) \leq L_1 2^{-k^{\ast}} S \leq L_1 \sqrt{ r}\sqrt{\lambda_{r}},
\end{equation*}
for a constant $L_1$ that depends only on $L$, as claimed. \qed

\newpage

\section*{Appendix}
In this appendix, we prove Corollary \ref{cor:lower_bound_gaussianwidth_general_norm}.
\begin{proof}
Let $\gamma_\mu$ be the gaussian measure $\sim \mathcal{N}(0,N^{-1}\Sigma)$. Setting $T=B_2^d$ and following the argument used in the proof of Theorem \ref{lemma:lower_bound_gaussianwidth_general_norm} (see \cite[Page 13 and 14]{depersin2022optimal}), we have that, if for a fixed $\delta<1/4$,

\begin{equation}
\label{ineq:corollary_lower_bound_gaussianwidth}
  \int_{\mu \in \mathbb{R}^d} \mathbb{P}(\|\widehat{\mu}(G_1,\ldots,G_N,\delta)-\mu\|_2\ge r^{\ast}) d_{\gamma_{\mu}}\le \frac{1}{4},
\end{equation}
then
\begin{equation*}
    r^{\ast}\ge \kappa_2\left(\frac{1}{\sqrt{2N}}\ell^{\ast}(\Sigma^{1/2}B_2^d)\right).
\end{equation*}
Thus, it is enough to show that \eqref{ineq:corollary_lower_bound_gaussianwidth} holds for some well-chosen $\delta$. To that end, if $\mu$ is distributed according to $\gamma_{\mu}$, then by the gaussian Lipschitz concentration inequality \cite{ledoux1991probability} and since
$$
\ell^{\ast}(\Sigma^{1/2}B_2^d)\le \sqrt{\tr(\Sigma)},
$$
it follows that
\begin{equation*}
    \mathbb{P}\left(\|\mu\|_2\ge 3\sqrt{\frac{\tr(\Sigma)}{N}}\right)\le e^{-2\tr(\Sigma)/\|\Sigma\|_{2 \to 2}}\le e^{-2}.
\end{equation*}
Setting $\delta = e^{-2}/2$ (which is smaller than $1/4$) and by \eqref{cor:lower_bound_gaussianwidth_general_norm_assumption},
\begin{equation*}
  \int_{\mu \in RB_2^d} \mathbb{P}(\|\widehat{\mu}-\mu\|_2\ge r^{\ast}) d_{\gamma_{\mu}}\le \sup_{\mu \in RB_2^d}\mathbb{P}(\|\widehat{\mu}-\mu\|_2\le r^{\ast}) \le \delta.
\end{equation*}
With this choice of $\delta$,
\begin{equation*}
\int_{\mu \in \mathbb{R}^d} \mathbb{P}(\|\widehat{\mu}-\mu\|_2\ge r^{\ast}) d_{\gamma_{\mu}} \le \delta + \mathbb{P}\left(\|\mu\|_2\ge 3\sqrt{\frac{\tr(\Sigma)}{N}}\right)\le \frac{3}{2}e^{-2}\le \frac{1}{4},
\end{equation*}
as required.
\end{proof}

\end{document}